\def\R {\mathbb{R}}
\def\N{\mathbb{N}}
\def\eps{\varepsilon}
\def\Sph{\mathbb{S}^{d-1}}
\def\FlatC{(x_d)_+}
\def\DimH{\mathrm{dim}_{\mathcal{H}}}
\def\En{\mathcal{E}}
\def\Diri{|\nabla u|^2}
\def\PosS{\{u>0\}}
\def\ConS{\{u=0\}}
\def\POSS{\{U>0\}}
\def\GU{\Gamma(u)}
\def\Reg{\mathrm{Reg}}
\def\Sing{\mathrm{Sing}}
\def\SingU{\mathrm{Sing}(u)}
\def\spt{\mathrm{spt}}
\def\FamPhi{\{g_t\}_{t\in(-1,1)}}
\def\C{\mathcal{C}(\R^d)}
\def\SC{\mathcal{SC}(\R^d)}
\def\RS{\mathcal{R}}
\def\M{\mathcal{M}}
\def\OM{\mathcal{OM}}
\def\hem{\hspace{0.5em}}
\def\vem{\vspace{0.6em}}
\newtheorem{thm}{Theorem}[section]
\newtheorem{prop}[thm]{Proposition} 
\newtheorem{ass}{Assumption} 
\newtheorem{conj}{Conjecture}
\newtheorem{op}{Open Question}
\newtheorem{cor}[thm]{Corollary}
\newtheorem{lem}[thm]{Lemma}
\theoremstyle{definition}
\newtheorem{defi}[thm]{Definition}
\numberwithin{equation}{section}
\theoremstyle{remark}
\newtheorem{rem}[thm]{Remark}
\title[Linearized equation and generic regularity]{Linearized equation and  generic regularity in the Alt-Caffarelli problem} 
\author{Xavier Fern\'andez-Real}
\address{Institute of Mathematics, \'Ecole Polytechnique F\'ed\'erale de Lausanne, Lausanne, Switzerland}
\email{xavier.fernandez-real@epfl.ch}
\author{Hui Yu}
\address{Department of Mathematics,	National University of Singapore, Singapore}
\email{huiyu@nus.edu.sg}
\thanks{X. F. was supported by the Swiss National Science Foundation (SNF grant PZ00P2\_208930),   by the Swiss State Secretariat for Education, Research and Innovation (SERI) under contract number MB22.00034, and by the AEI project PID2021-125021NA-I00 (Spain).}
\begin{document}

\begin{abstract}
For the Alt-Caffarelli problem, we study  free boundary regularity  of energy minimizers. In six dimensions, we show that free boundaries are analytic for generic boundary data. In general, we improve previous generic Hausdorff dimensions of the singular sets. 

To achieve this, we analyze positive solutions to the linearized equation around homogeneous minimizers (possibly with singular sections on the sphere). For this equation,  we prove a Harnack inequality and establish a dimensional lower bound for its principal eigenvalue. 
\end{abstract}

\maketitle
\tableofcontents
\section{Introduction}
For a nonnegative function $u$ on a domain $\Omega\subset\R^d$, its \textit{Alt-Caffarelli energy} is given by 
\begin{equation}
\label{EqnAC}
\En(u;\Omega):=\int_\Omega\Diri+\chi_{\PosS},
\end{equation} 
where the characteristic function of a set $E$ is denoted by $\chi_E$.  
This functional was introduced by Alt and Caffarelli in \cite{AC} to address questions in fluid mechanics \cite{BSS, GLS, KW1, KW2}. Since then, the Alt-Caffarelli problem has become one of the most intensely studied free boundary problems. For more background, the reader may consult the monographs by Caffarelli-Salsa \cite{CS} and by Velichkov \cite{V}.  

Under reasonable assumptions on boundary data, it is not difficult to show the existence\footnote{The minimizer may be nonunique for a given boundary datum. However, it is unique for generic data \cite{FeY, FeG}.} of a minimizer $u$ of  \eqref{EqnAC} as well as its optimal regularity \cite{AC}. Much subtler is the regularity of its \textit{free boundary}, namely, 
\begin{equation}
\label{EqnGU}
\GU:=\partial\PosS.
\end{equation} 
This is the interface between the \textit{positive set} $\PosS$ and the \textit{contact set} $\ConS$.

\vem

Following either the classical approach  \cite{AC} or the modern approach \cite{D1}, the free boundary of a minimizer $u$ is decomposed into a \textit{regular part} and a \textit{singular part}
\begin{equation*}
\label{EqnRegAndSing}
\GU=\Reg(u)\cup\Sing(u).
\end{equation*} 
While the regular part is  analytic \cite{C1, C2, CS, DS, KN}, not much is known about $\SingU$ despite exciting developments \cite{ESV}.

One central question is the size of the singular part, for instance, in terms of its Hausdorff dimension $\DimH(\SingU)$. With the monotonicity formula in  Weiss \cite{W}, this reduces to determining the \textit{critical dimension}
\begin{equation}
\label{EqnCriticalDimension}
d^*:=\max\{d\in\N:\hem \text{homogeneous minimizers are rotations of }(x_d)_+ \text{ in }\R^d\}.
\end{equation} 
In this work, homogeneous minimizers are  referred to as \textit{minimizing cones}, and those that are not rotations of $(x_d)_+$ are called \textit{singular minimizing cones}.

If  $u$ is a minimizer  in a domain $\Omega\subset\R^d$,  it follows from \cite{W} that,  for  $d\le d^*$, the singular part is empty and the free boundary is analytic. For $d> d^*$ instead, we have
\begin{equation}
\label{DimensionBound}
\DimH(\SingU)\le d-d^*-1.
\end{equation}
The value of $d^*$ also carries important information for Bernstein-type results \cite{CFFS, D2, EFeY, KaWa}, and currently, we only have partial information on it. That is,
\begin{equation}
\label{EqnPartialInformation}
4\le d^*\le 6.
\end{equation}
The lower bound is due to  Caffarelli-Jerison-Kenig \cite{CJK} and Jerison-Savin \cite{JS}, whereas the upper bound is due to the De Silva-Jerison cone\footnote{Constructing singular minimizing cones is a challenging task. Currently, the De Silva-Jerison cone remains the only known example. 

For stable cones, the reader may refer to the work of Hong \cite{H}. A recent work of Hines-Kolesar-McGrath \cite{HKM} gives examples of homogeneous critical points for \eqref{EqnAC}. 

Interesting non-homogeneous critical points have been constructed in \cite{BSS, HHP, KW1, KW2, LWW}.}  in $\R^7$ \cite{DJ}. 

The following is then a major conjecture for this problem (see \cite{DJ}):
\begin{conj}
\label{Conj}
The critical dimension $d^*$ is $6$.  In particular, for any minimizer $u$ of   \eqref{EqnAC} in  $\Omega\subset\R^d$ with $d\le 6$, we have that  its free boundary $\GU$ is analytic. 
\end{conj}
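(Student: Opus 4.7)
The plan is to reduce Conjecture~\ref{Conj} to proving the lower bound $d^*\ge 6$. Combined with the known upper bound in \eqref{EqnPartialInformation} and the dimension estimate \eqref{DimensionBound}, this empties $\SingU$ whenever $d\le 6$, after which the analyticity of the regular part \cite{C1,C2,CS,DS,KN} yields the full conclusion. The entire problem therefore collapses to excluding singular minimizing cones in $\R^5$ and $\R^6$.

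I would approach this exclusion via a Simons-type stability argument, which is precisely the setting motivating the linearized-equation machinery announced in the abstract. Suppose $u$ is a homogeneous minimizer in $\R^d$ with $d\in\{5,6\}$ whose cross-section $\Sigma=\PosS\cap\Sph$ is not a spherical cap. Minimality of $u$ produces a second-variation inequality for compactly supported perturbations that move the free boundary, and hence a stability inequality for the associated linearized equation. Because $u$ is homogeneous of degree $1/2$, natural vector fields --- translations, dilations and rotations --- furnish explicit solutions of the linearized equation, and one can extract from them a positive solution on $\PosS\cap\Sph$ with controlled behaviour at $\partial\Sigma$.

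With such a positive solution in hand, I would apply the two main tools developed in this paper. The Harnack inequality for the linearized equation should propagate positivity up to the free boundary, even when $\partial\Sigma$ is singular, and should permit passing to a principal eigenfunction on $\Sigma$ with reasonable compactness. The dimensional lower bound on this principal eigenvalue would then force $\lambda_1(\Sigma)$ above a threshold depending on $d$; combined with the stability inequality coming from minimality, this should rule out the existence of $\Sigma$ whenever $d$ is below a critical threshold, producing the sought contradiction and hence $d^*\ge 6$.

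The main obstacle, and the reason Conjecture~\ref{Conj} remains open, is precisely the gap between the numerical threshold delivered by the currently available eigenvalue estimate and the value $d=6$. As the abstract indicates, the present methods yield analyticity only for \emph{generic} boundary data in $\R^6$ and an improved bound on $\DimH(\SingU)$ in general, not the pointwise statement for every minimizer. To close this gap one would need either a sharpened eigenvalue inequality encoding the spherical Alt-Caffarelli structure of $\Sigma$ --- so that the De Silva-Jerison cone in $\R^7$ appears as the unique extremal configuration --- or a low-dimensional classification of singular cones in the spirit of \cite{CJK,JS}. A complete proof of Conjecture~\ref{Conj} will likely require combining both ingredients with the linearized-equation tools developed here.
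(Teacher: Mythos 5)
The statement you were asked to prove is a \emph{conjecture}, not a theorem: the paper explicitly presents Conjecture~\ref{Conj} as open and proves only the generic version (Theorem~\ref{ThmMainIntro} and Corollary~\ref{CorMainIntro}). So there is no ``paper's proof'' to compare against, and your write-up correctly never claims to close the argument --- your last paragraph concedes that the methods available ``yield analyticity only for generic boundary data'' and that a complete proof ``will likely require'' further ingredients. As submitted, this is a research program, not a proof, and it does not establish the statement.

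Beyond that structural issue, there is a factual error worth flagging: minimizers of the Alt--Caffarelli energy \eqref{EqnAC} that are homogeneous are $1$-homogeneous (indeed the flat cone is $(x_d)_+$), not homogeneous of degree $1/2$. The degree-$1/2$ scaling belongs to the thin obstacle/Signorini problem, not to \eqref{EqnAC}; importing it here would break the separation-of-variables ansatz in \eqref{EqnSphJacobi} and the root equation \eqref{eq:gamma_below}. More substantively, the Simons-type route you sketch runs into exactly the obstructions the paper names: there is no known Simons identity for the free boundary problem (see the discussion in Subsection~\ref{SubsectionPrincipalEigenValue}), the translations/dilations/rotations of a singular cone do not obviously produce a positive Jacobi field in $\overline{\POSS}\setminus\Sing(U)$, and most importantly the dimensional lower bound $\Lambda_d\ge\delta_d$ in Theorem~\ref{ThmDimensionalLowerBound} is not sharp, so no threshold comparison of the type you envision can currently rule out cones in $\R^5$ or $\R^6$. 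Excluding singular cones up to $d=6$ is precisely the open problem, and the paper's tools are calibrated to yield generic, not universal, regularity.
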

Despite its fundamental importance, the progress towards this conjecture has been slow, apart from the aforementioned works \cite{CJK, DJ, JS}. 

In this article, we show that \textbf{this conjecture holds for generic data} (see Corollary~\ref{CorMainIntro} or Remark~\ref{RemSharpGamma}).

\subsection{Generic regularity for the Alt-Caffarelli problem}
In several areas, it has been observed that singularities may be removed by small perturbations of data. In minimal surface theory, we have the classical results of Hardt-Simon \cite{HS} and Smale \cite{Sm}, as well as exciting recent developments by Chodosh-Liokumovich-Spolaor \cite{CLS}, Chodosh-Mantoulidis-Schulze \cite{CMS1,CMS2}, Chodosh-Mantoulidis-Schulze-Wang \cite{CMSWa} and Li-Wang \cite{LWa}. For the obstacle problem, this was conjectured by Schaeffer \cite{Sc}. Following the earlier work by Monneau \cite{M} in $\R^2$, this conjecture was resolved by Figalli, Ros-Oton and Serra \cite{FRS} up to $d=4$. For the Signorini problem, the generic regularity of the free boundary was established by Fern\'andez-Real and Ros-Oton \cite{FeR} and Fern\'andez-Real and Torres-Latorre \cite{FeT} (see also \cite{CC24}). Our work is inspired by these pioneering results. 

For our problem \eqref{EqnAC}, De Silva-Jerison-Shahgholian \cite{DJS} and Edelen-Spolaor-Velichkov \cite{EdSV} constructed families of minimizers with analytic free boundaries near a singular minimizing cone. This hints at a similar phenomenon as described in the previous paragraph. 

The authors  of the present manuscript established a generic regularity result for minimizers of the Alt-Caffarelli energy $\En(\cdot)$ in  \eqref{EqnAC} \cite{FeY}, that improved the estimates for generic data by one dimension. Recall the critical dimension $d^*$ from \eqref{EqnCriticalDimension}. 
\begin{thm}[Theorem 1.5 in \cite{FeY}, Theorem 1.4 in \cite{FeG}]
\label{ThmOldThm}
Let $\{g_t\}_{t\in(-1,1)}$ be an admissible family of boundary data\footnote{The family is required to be continuous for each time and   increasing on $\partial B_1$ with respect to $t$ at a linear rate \cite{FeG}.} on $\partial B_1\subset\R^d$. For each $t$, let $u_t$ be a minimizer of \eqref{EqnAC}  with $u_t|_{\partial B_1}=g_t$. Then, if $d=d^*+1$, there is a countable set $J\subset(-1,1)$ such that $$
\Sing(u_t)=\emptyset\hem \text{ for }t\in (-1,1)\backslash J.
$$
\end{thm}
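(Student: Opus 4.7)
Since $d = d^*+1$, the bound \eqref{DimensionBound} gives $\DimH(\Sing(u_t)) \le 0$ for every $t$, and a Federer-type reduction (using the rigidity of tangent cones at singular points above $d^*$) upgrades this to: $\Sing(u_t)$ is locally discrete for each $t$. The plan is to establish a \emph{spatial uniqueness} claim --- that each $x_0 \in B_1$ can be a singular point for at most one $t \in (-1,1)$ --- and then to combine it with this discreteness and the continuous dependence of $u_t$ on $t$ to deduce that the set $J := \{t : \Sing(u_t) \neq \emptyset\}$ is at most countable.

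To prove spatial uniqueness, suppose $x_0 \in \Sing(u_{t_0})$. A blow-up of $u_{t_0}$ at $x_0$ produces a singular minimizing cone $U$, whose Weiss density strictly exceeds the regular value: $W(U) > W_{\Reg}$, where $W_{\Reg}$ denotes the Weiss density of a rotation of $\FlatC$. By admissibility, $g_t$ is strictly increasing in $t$ at a linear rate, and the comparison principle gives $u_t \ge u_{t_0}$ in $B_1$ for $t > t_0$; in fact one can produce a quantitative lower bound of the form $u_t - u_{t_0} \ge c(t-t_0)\,\varphi$, where $\varphi$ is a positive harmonic function in $\{u_{t_0}>0\}$. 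I would then argue that this strict monotonicity forces $W(0^+, u_t, x_0) < W(U)$ for $t > t_0$, preventing $x_0$ from being singular for $u_t$. Arguing by contradiction, if $x_0 \in \Sing(u_{t_n})$ for a sequence $t_n \downarrow t_0$, a careful blow-up at $x_0$ along scales where $u_{t_n} - u_{t_0}$ is comparable to $u_{t_0}$ should yield, in the limit, a nontrivial positive perturbation of $U$ that still has $0$ on its free boundary. A strong maximum principle / Hopf-type argument applied inside the (nonempty) positive set of $U$ then rules this out. A symmetric argument handles $t < t_0$.

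The chief obstacle is making the above \emph{drop} in Weiss density quantitative: the natural comparison function $\varphi$ vanishes precisely at $x_0 \in \GU_{t_0}$, so pointwise gain is lost exactly where it is needed. The remedy is to pass to a rescaled limit equation on the singular cone $U$ --- morally a linearized Alt-Caffarelli equation with a transmission condition along $\Gamma(U)$ --- and to invoke a Liouville-type rigidity for positive solutions of that linearized problem to close the argument. Once spatial uniqueness is in place, write $\Sigma := \{(x,t) : x \in \Sing(u_t)\} \subset B_1 \times (-1,1)$; the discreteness of each time-slice combined with upper semicontinuity of $\Sing$ in $t$ (a standard consequence of Weiss monotonicity and the uniform regularity of minimizers) implies that $\Sigma$ projects to an at most countable subset of $(-1,1)$, completing the proof.
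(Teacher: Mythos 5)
The critical gap is in the final step, where you claim that the discreteness of each time-slice, together with upper semicontinuity of $\Sing$ in $t$ and spatial uniqueness, implies that $J = \pi_t(\Sigma)$ is countable. This implication is false. Take $C$ a Cantor set in $(-1,1)$ and $f: C \to B_1$ any continuous injection, and set $\Sigma = \{(f(t),t): t\in C\}$. Then each time-slice contains at most one point (hence is discrete), $\Sigma$ is closed (so the set-valued map is upper semicontinuous), and $f$ is injective (spatial uniqueness), yet $\pi_t(\Sigma) = C$ is uncountable. So the three properties you isolate cannot possibly yield countability on their own; some further quantitative input is needed. In the paper's own route (delegated to \cite[Prop.~4.6]{FeY}), that missing input is the \emph{cleaning lemma} --- the linear lower bound $\mathrm{dist}(\Gamma(u_t),\Gamma(u_s)) \ge c|t-s|$ of \eqref{EqnOldCleaning} --- combined with a Federer-type dimension reduction / covering argument in the spirit of Lemma~\ref{LemGenericReduction}. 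Without quantifying how singular points for nearby times must separate in space (or, equivalently, without a compactness argument showing that $\pi_x(\Sigma)$ is actually discrete rather than merely zero-dimensional), the countability of $J$ does not follow.

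A secondary, smaller point: your spatial-uniqueness step is phrased through a Weiss-density drop, but a strict inequality $W(0^+,u_t,x_0) < W(U)$ by itself does not prevent $x_0$ from being singular for $u_t$ --- singular densities are not a single value, only bounded away from $W_{\Reg}$ by a gap. The correct and much cleaner mechanism is the strict maximum principle for ordered minimizers, Theorem~\ref{ThmStricMax}: for $t>t_0$, $(u_{t_0},u_t)\in\OM(B_1)$ and hence $\Gamma(u_{t_0})\cap\Gamma(u_t)\cap B_1 = \emptyset$, which immediately rules out $x_0\in\Gamma(u_t)$. Your blow-up/Hopf contradiction at the end of step 2 is essentially a re-derivation of this fact, but it is not needed once the strict maximum principle is invoked.
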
 
In higher dimensions, the estimate in \eqref{DimensionBound} was also improved by $1$ for generic data. 

With the lower bound on $d^*$ in \eqref{EqnPartialInformation}, this shows that, under small perturbations,  free boundaries are analytic in $\R^5$. This misses one dimension in Conjecture~\ref{Conj} for generic data. 

\vem

To close this gap, we note that there are two main ingredients behind Theorem~\ref{ThmOldThm}. With the notation in this theorem, the first ingredient is an estimate on the Hausdorff dimension of $\bigcup_{t\in(-1,1)}\Sing(u_t)$. In this direction, we obtained the sharp estimate in \cite{FeY}. (See Proposition~\ref{PropOptimalResultFeY}.)

The second ingredient is a `cleaning'  lemma, which states that free boundaries separate when we increase the boundary data. In our previous work, we achieved a linear rate of separation (Lemma 4.3 in \cite{FeY}), namely, 
\begin{equation}
\label{EqnOldCleaning}
\mathrm{dist}(\Gamma(u_t),\Gamma(u_s))\ge c|t-s|.
\end{equation}
This allowed us to reduce the dimension of the singular set by one for generic data. 

While this estimate \eqref{EqnOldCleaning} gives the sharp separation between entire free boundaries $\Gamma(u_t)$ and $\Gamma(u_s)$, faster separation is expected between the singular sets $\Sing(u_t)$ and $\Sing(u_s)$. 

\vem

In this work, we obtain this faster separation between singular sets. 
To be precise, we impose the following assumption on boundary data:
\begin{ass}
\label{Ass}
A family of non-negative functions $\{g_t\}_{t\in(-1,1)}\subset C(\overline{B_1})\cap H^1(B_1)$ is admissible if 
$$
g_t-g_s\ge t-s \hem\text{ on }\partial B_1\cap\{g_s>0\}\qquad \text{for all} \quad -1 < s < t < 1. 
$$
\end{ass}
\begin{rem}
Compared with \cite{FeY, FeG}, we have weakened the monotonicity assumption.  Previously, we required the separation on the entire $\partial B_1$. Here it is only imposed on the positive set of the smaller function. This allows boundary data of the form $(h_t)_+$, where $h_t$ is  linearly increasing with respect to  $t$ (not necessarily nonnegative). In particular, we allow nontrivial free boundaries on $\partial B_1.$ 

This opens the door to the study of generic behavior of the free boundary near the  fixed boundary. See, for instance, \cite{ChS,FSV}.
\end{rem} 

For each $t\in(-1,1)$, suppose that $u_t$ denotes a minimizer of \eqref{EqnAC} that takes $g_t$ as boundary data, we can improve the cleaning estimate \eqref{EqnOldCleaning} to 
\begin{equation}
\label{EqnNewCleaningIntro}
\mathrm{dist}(\Sing(u_t),\Sing(u_s))^{1+\gamma_d}\ge c|t-s|
\end{equation} 
for a dimensional $\gamma_d>0$. (See Lemma~\ref{LemCleaningNew}.)  This leads to  the main result of this work:
\begin{thm}
\label{ThmMainIntro}
Let  $\FamPhi$ be an admissible family as in Assumption~\ref{Ass}. For each $t\in(-1,1)$, suppose that $u_t$ is a minimizer of  \eqref{EqnAC} in $B_1$ with 
$$
u_t=g_t \hem\text{ on }\partial B_1.
$$
Then we have, for $d^*$ given by \eqref{EqnCriticalDimension},
\begin{enumerate}
\item{
If $d=d^*+1 \text{ or }d^*+2$, then
$$
\Sing(u_t)=\emptyset \hem\text{ for almost every }t\in(-1,1);
$$
}
\item{
If $d\ge d^*+3$, then
$$
\DimH(\Sing(u_t))\le d-d^*-2-\gamma_d \hem\text{ for almost every }t\in(-1,1),
$$
where $\gamma_d>0$ is a dimensional constant. }
\end{enumerate}
\end{thm}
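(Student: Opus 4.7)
My plan is to reduce the theorem to the two ingredients already highlighted in the introduction: the sharp estimate
\[
\DimH\Bigl(\bigcup_{t\in(-1,1)}\SingUT\Bigr)\le d-d^*-1
\]
of Proposition~\ref{PropOptimalResultFeY}, and the improved cleaning bound
\[
\mathrm{dist}\bigl(\SingUT,\Sing(u_s)\bigr)^{1+\gamma_d}\ge c\,|t-s|
\]
of Lemma~\ref{LemCleaningNew}. The mechanism is a mass-distribution argument in $(t,x)$-space that trades one unit of spatial Hausdorff dimension for $1+\gamma_d$ units of temporal dimension, the extra $\gamma_d$ coming from the super-linear separation of singular sets.

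Fix $\alpha\ge 0$ and set $B_\alpha:=\{t\in(-1,1):\mathcal{H}^\alpha(\SingUT)>0\}$; the goal is to show that $|B_\alpha|=0$ whenever $\alpha>d-d^*-2-\gamma_d$. Assume for contradiction $|B_\alpha|>0$. After passing to a positive-measure sub-family $B'_\alpha\subset B_\alpha$ via a standard measurable-selection / Egorov argument (so that the Frostman constants can be taken uniform), Frostman's lemma supplies probability measures $\mu_t$ supported on $\SingUT$ obeying $\mu_t(B_r(x))\le C_0\, r^\alpha$ for every ball $B_r(x)\subset\R^d$ and every $t\in B'_\alpha$, with $t\mapsto\mu_t$ Borel measurable. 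Then $\nu:=\int_{B'_\alpha}\mu_t\,dt$ is a nonzero finite Radon measure with $\spt\nu\subset\bigcup_{t\in(-1,1)}\SingUT\subset\overline{B_1}$.

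The decisive estimate is the Frostman decay of $\nu$. If a single ball $B_r(x)$ meets both $\SingUT$ and $\Sing(u_s)$, then choosing points in these two intersections and invoking Lemma~\ref{LemCleaningNew} yields $|t-s|\le C\,(2r)^{1+\gamma_d}$, so only a time-interval of length at most $Cr^{1+\gamma_d}$ contributes to $\nu(B_r(x))$. Combined with the uniform Frostman bound $\mu_t(B_r(x))\le C_0 r^\alpha$, this gives
\[
\nu(B_r(x))\le C\, r^{\alpha+1+\gamma_d}\qquad\text{for all }r>0,\ x\in\R^d.
\]
The mass-distribution principle therefore forces $\DimH(\spt\nu)\ge\alpha+1+\gamma_d$, and Proposition~\ref{PropOptimalResultFeY} implies $\alpha+1+\gamma_d\le d-d^*-1$, contradicting the choice $\alpha>d-d^*-2-\gamma_d$ and proving part~(2). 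Part~(1) then follows by noting that when $d\le d^*+2$ the bound $d-d^*-2-\gamma_d<0$ is already incompatible with the admissible value $\alpha=0$, so $B_0=\{t:\SingUT\ne\emptyset\}$ is Lebesgue null.

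The technical point requiring most care is the uniform, measurable construction of the Frostman family $\{\mu_t\}_{t\in B'_\alpha}$: Frostman's lemma cannot simply be invoked separately for each $t$, since the Frostman constant a priori depends on $t$. I would handle this by a dyadic decomposition of $B_\alpha$ into sub-families on which the $\alpha$-dimensional Hausdorff content of $\SingUT$ is bounded below by a common constant, combined with a measurable-selection theorem for compact-valued maps. Alternatively, one can bypass measures entirely and run the whole argument with dyadic covering numbers at the price of slightly heavier bookkeeping, since the trade $(\alpha,1)\mapsto(\alpha+1+\gamma_d)$ is what the cleaning lemma provides at the level of coverings.
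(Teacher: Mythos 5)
Your proof is correct in spirit and relies on the same two PDE ingredients the paper uses (Proposition~\ref{PropOptimalResultFeY} for the space-time projection bound and Lemma~\ref{LemCleaningNew} for the super-linear cleaning estimate, the latter fed by Proposition~\ref{PropOrderingFromData} and Assumption~\ref{Ass} to get ordered minimizers). The difference is in the geometric measure theory step: the paper simply verifies condition (2) of Lemma~\ref{LemGenericReduction} with $p = 1+\gamma_d$ and $m = d-d^*-1$ and invokes that off-the-shelf result (Corollary~7.8 of \cite{FRS}), whereas you re-derive the dimension trade-off from scratch via Frostman measures and the mass-distribution principle. Your argument buys self-containedness, but at the cost of the measurability/uniformity issue you flag: Frostman's lemma gives constants and measures depending on $t$, and packaging $\{\mu_t\}$ into a Borel family $\nu = \int \mu_t\,dt$ needs either a measurable-selection argument or, as you note, a covering/content reformulation that avoids measures — which is essentially how Lemma~\ref{LemGenericReduction} is proved in \cite{FRS} anyway. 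Two small precision points worth fixing: Lemma~\ref{LemCleaningNew} is stated for $\gamma<\gamma_d$, so the exponent $1+\gamma_d$ in your Frostman decay should be $1+\gamma$ with $\gamma<\gamma_d$, with a final limit $\gamma\uparrow\gamma_d$ recovering the stated bound; and the cleaning lemma is localized to singular points in $B_{1/2}$ and balls of radius $<1/4$, so a routine localization (or the restriction to small $r$, which suffices for the mass-distribution principle) should be made explicit.
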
 

As a consequence, we have
\begin{cor}
\label{CorMainIntro}
For a nonnegative $g\in C(\overline{B_1})\cap H^{1}(B_1)$ and a constant $\eps>0$, there is $\tilde{g}$ satisfying
$$
\|\tilde{g}-g\|_{L^{\infty}( \overline{B_1})}+\|\tilde{g}-g\|_{H^1(B_1)}<\eps
$$
such that if $\tilde{u}$ is a minimizer of \eqref{EqnAC} in $B_1$ with $\tilde{u}=\tilde{g}$ on $\partial B_1$, then we have
\begin{enumerate}
\item{
If $d=d^*+1 \text{ or }d^*+2$, then
$$
\Sing(\tilde{u})=\emptyset;
$$
}
\item{
If $d\ge d^*+3$, then
$$
\DimH(\Sing(\tilde{u}))\le d-d^*-2-\gamma_d,
$$
where $\gamma_d>0$ is a dimensional constant. }
\end{enumerate}
\end{cor}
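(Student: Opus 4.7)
The plan is to deduce the corollary from Theorem~\ref{ThmMainIntro} by embedding the given boundary datum $g$ into a one-parameter admissible family and then picking a good parameter. Precisely, for $t\in(-1,1)$ I would define
\begin{equation*}
g_t(x) := (g(x)+t)_+ \qquad \text{on } \overline{B_1},
\end{equation*}
noting that $g_0=g$ since $g\ge 0$. This family falls exactly into the enlarged class of admissible data that the remark after Assumption~\ref{Ass} is designed to cover: for $-1<s<t<1$ and $x\in\partial B_1\cap\{g_s>0\}$ one has $g(x)+s>0$, hence $g(x)+t> t-s>0$, so $g_t(x)-g_s(x)=t-s$ and Assumption~\ref{Ass} is satisfied (with equality).

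Next I would establish continuity of $t\mapsto g_t$ at $t=0$ in the relevant norms. The $L^\infty$ bound $\|g_t-g\|_{L^\infty(\overline{B_1})}\le |t|$ is immediate from the two cases $g(x)+t\ge 0$ and $g(x)+t<0$. For the $H^1$ bound, one uses $\|g_t-g\|_{L^2}\le |t||B_1|^{1/2}$ and, via the Stampacchia identity $\nabla g=0$ a.e.\ on $\{g=0\}$,
\begin{equation*}
\nabla(g_t-g) \;=\; -\chi_{\{g+t\le 0\}}\nabla g \;=\; -\chi_{\{0\le g\le -t\}}\nabla g,
\end{equation*}
so dominated convergence yields $\|\nabla(g_t-g)\|_{L^2(B_1)}\to 0$ as $t\to 0$. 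Consequently there exists $\delta>0$ such that $\|g_t-g\|_{L^\infty}+\|g_t-g\|_{H^1}<\eps$ for every $t\in(-\delta,\delta)$.

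Now I would apply Theorem~\ref{ThmMainIntro} to $\FamPhi$. It produces a set $A\subset(-1,1)$ of full Lebesgue measure such that, for every $t\in A$ and every minimizer $u_t$ with boundary data $g_t$, the singular set $\Sing(u_t)$ is either empty (when $d=d^*+1$ or $d^*+2$) or has Hausdorff dimension at most $d-d^*-2-\gamma_d$ (when $d\ge d^*+3$). Since $A\cap(-\delta,\delta)$ has positive measure, it is nonempty; I would pick any $t_0$ in this intersection and set
\begin{equation*}
\tilde{g}:=g_{t_0}.
\end{equation*}
Then any minimizer $\tilde u$ of \eqref{EqnAC} in $B_1$ with $\tilde u=\tilde g$ on $\partial B_1$ is of the form $u_{t_0}$ for the parameter $t_0\in A$, so the conclusions of Theorem~\ref{ThmMainIntro} give exactly the two cases of the corollary, while the choice $|t_0|<\delta$ ensures $\|\tilde g-g\|_{L^\infty}+\|\tilde g-g\|_{H^1}<\eps$.

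There is essentially no hard step here: the only mild subtlety is handling potential nonuniqueness of minimizers, which is harmless because Theorem~\ref{ThmMainIntro} is quantified over every choice of $u_t$ at the a.e.\ parameter $t$ (and, as the authors recall in their footnote, uniqueness holds for generic data in any case). The real content of the argument — the existence of an admissible family through $g$ with good continuity properties — is completely routine once the Stampacchia identity is invoked; the regularity statement itself comes entirely from Theorem~\ref{ThmMainIntro}.
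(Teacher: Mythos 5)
Your proposal is correct and follows essentially the same route as the paper: embed $g$ into a one-parameter admissible family $\{g_t\}$, invoke Theorem~\ref{ThmMainIntro} to get an almost-every-$t$ conclusion, and select a parameter $t_0$ close to $0$. The paper uses $g_t = g+t$ and dismisses the non-negativity issue with the phrase ``up to a translation in $t$'' (i.e.\ implicitly restricting to $t>0$, where continuity in $L^\infty\cap H^1$ is immediate since $g_t-g\equiv t$), whereas you take $g_t=(g+t)_+$ and then need the Stampacchia identity to control $\nabla(g_t-g)$ for $t<0$. Both are fine; your variant is slightly more elaborate but also verifies Assumption~\ref{Ass} in full and handles the norm estimate carefully, which the paper leaves implicit.
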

\begin{rem}
With the lower bound in \eqref{EqnPartialInformation}, this resolves Conjecture~\ref{Conj} for generic boundary data. 
\end{rem} 

\begin{rem}
\label{RemSharpGamma}
The constant $\gamma_d$ is given in \eqref{EqnGammaD}. It arises from our dimensional lower bound on the  principal eigenvalue of the linearized equation in Theorem~\ref{ThmDimensionalLowerBound}.  The sharp value of $\gamma_d$ is an important open question that requires new insights. 
\end{rem} 

\begin{rem}
In our previous work \cite{FeY}, Theorem~\ref{ThmOldThm} was established for the Alt-Phillips problem, a family of free boundary problems that includes \eqref{EqnAC} as a special case \cite{AP}. For this family of problems, we expect it is possible to make improvements as in Theorem~\ref{ThmMainIntro} and Corollary~\ref{CorMainIntro}. The new `cleaning' estimate \eqref{EqnNewCleaningIntro}, however,  becomes more challenging for the Alt-Phillips problem, as the linearized equation is more involved \cite{CT, KS, SY}.
\end{rem} 

\vem

The improvement in Theorem~\ref{ThmMainIntro} and Corollary~\ref{CorMainIntro} follows from the superlinearity in \eqref{EqnNewCleaningIntro}. To gain this extra power, we need to analyze the linearized equation around singular minimizing cones. To be precise, suppose that $U$ is a minimizing  cone of \eqref{EqnAC}, the linearized equation around $U$ reads as \cite{CJK, DJS, JS}
\begin{equation}
\label{EqnLinearizedEquation}
\begin{cases}
\Delta\varphi=0 &\text{ in }\POSS,\\
\varphi_\nu+H\varphi=0 &\text{ on }\partial\POSS.
\end{cases}
\end{equation} 
Here $\nu$ is the inner unit normal of $\partial\POSS$, and $H$ denotes the mean curvature of $\partial\POSS$. 
Following the tradition in minimal surface theory, we refer to the linearized equation as the \textit{Jacobi equation} and solutions to the linearized equation as \textit{Jacobi fields}. 

The key insight behind generic regularity of minimal surfaces is a link between the decay of positive Jacobi fields and a cleaning estimate similar to \eqref{EqnNewCleaningIntro}. See, for instance,   \cite{CLS,CMS1,CMS2,CMSWa,LWa,Wa}, where this link is exploited using two classical ingredients: the bound on the principal eigenvalue for the Jacobi operator \cite{S,Z1} and the Harnack inequality on minimal surfaces \cite{BG}.   These ingredients are absent in the theory of free boundary problems. 

As part of the  main contributions of this work, we establish the following in the context of the Alt-Caffarelli problem \eqref{EqnAC}:
\begin{enumerate}
\item{a dimensional lower bound for the principal eigenvalue of the Jacobi equation on the sphere (Theorem~\ref{ThmDimensionalLowerBound}); and }
\item{a Harnack inequality for positive Jacobi fields (Theorem~\ref{ThmHarnack}).}
\end{enumerate}
These are of independent interest in the study of the Alt-Caffarelli problem and related problems, and constitute two important by-products of the analysis we perform here. 

Although similar results are well-known for minimal surfaces, new challenges arise in the context of free boundary problems. Instead of a single equation imposed on the minimal surface, here we face simultaneously two equations, one in the positive set $\PosS$ and one along the free boundary $\GU$. These two equations are in competition \cite{DJ, JS, SY}, leading to new difficulties. 

Below we explain some ideas behind the two ingredients above. 
\subsection{Principal eigenvalue of the Jacobi equation on the sphere}\label{SubsectionPrincipalEigenValue}
At the infinitesimal scale, the separation between free boundaries is modeled by the decay rate of a positive Jacobi field on a minimizing cone, say, $U$ \cite{DJS, EdSV}.  After a separation of variables, this reduces to a lower bound on  the principal eigenvalue, $\lambda(U)$, in the following system
\begin{equation}
\label{EqnJacobiSph}
\begin{cases}
\Delta_{\Sph}\varphi=\lambda(U)\varphi &\text{ in }\{U>0\}^S,\\
\varphi_\nu+H\varphi=0 &\text{ on }\Gamma(U)^S,\\
\varphi>0 &\text{ in }\{U>0\}^S.
\end{cases}
\end{equation} 
Here and in the remaining part of this work, we denote by $E^S$ the trace of  a set $E$ on the sphere, that is,
\begin{equation}
\label{EqnES}
E^S:=E\cap\partial B_1,
\end{equation} 
and $\Delta_{\Sph}$ denotes the spherical Laplacian. 
 Recall that $\nu$ denotes the inner unit normal on the free boundary $\Gamma(U)$ and that $H$ denotes the mean curvature of the free boundary. 

For a minimizing cone $U$ with $\Sing(U)=\{0\}$, De Silva-Jerison-Shahgholian studied \eqref{EqnJacobiSph}  to quantify the rate at which nearby minimizers converge to $U$. Their estimate depends on the specific cone $U$ and crucially uses the smoothness of $\Gamma(U)^S$.

For our purpose, we show that, for any minimizing cone $U$ (possibly with nonempty $\Sing(U)^S$), we have
\begin{equation}
\label{EqnDimensionalBoundIntro}
\lambda(U)\ge \Lambda_d>0
\end{equation}
for a dimensional constant $\Lambda_d$. (See Theorem~\ref{ThmDimensionalLowerBound}.)

Even for cones with isolated singularity, this dimensional bound is completely new. The sharp value of $\Lambda_d$ remains an open question (see Remark~\ref{RemSharpGamma}).

\vem

For minimal surfaces, a similar estimate was shown for cones with an isolated singularity by Simons \cite{S} with the sharp value of $\Lambda_d$. The cones that attain equality in the estimate were classified by Perdomo \cite{P} and Wu \cite{Wu}. These were extended to general cones by Zhu \cite{Z1}. 

Among its applications, the sharp value of $\Lambda_d$ allowed Simons to rule out stable singular cones in seven dimensions \cite{S}. It  leads to the classification of entropy-stable cones by Zhu \cite{Z2}. The recent breakthrough by Chodosh-Mantoulidis-Schulze-Wang \cite{CMSWa} used crucially the full classification of cones with the extremal principal eigenvalue.

\vem

In the context of the Alt-Caffarelli problem, we lack tools like the Simons identity \cite{S, KaWa}, and less information seems to be available based on symmetry of the problem \cite{P}. Moreover,  due to the competition between the two equations in \eqref{EqnJacobiSph}, it is less clear what is the optimal test function to use \cite{JS}.  As a result,  despite the dimensional  bound in \eqref{EqnDimensionalBoundIntro}, the following important questions remain open, even for cones with isolated singularities:
\begin{op}
What is the sharp value of $\Lambda_d$? 
\end{op}
\begin{op}
In $\R^d$, what can be said about a minimizing cone $U$ with $\lambda(U)=\Lambda_d$?
\end{op}

\begin{rem}
\label{rem:numeric}
In the context of minimal surfaces, the optimal value of $\Lambda_d$ has been known for decades \cite{S, Z1}. This yields an explicit value of $\gamma_d$ as in \eqref{EqnNewCleaningIntro} that  is greater than 1 in all dimensions.
For the Alt-Caffarelli problem,  singular minimizing cones are much less understood. The only known example is  the axially symmetric cone  by De Silva-Jerison in $\R^7$ \cite{DJ}. 

For the axially symmetric cone in $\R^d$,  one can numerically compute the principal eigenvalue of the Jacobi operator on the sphere, $\lambda_d$. 
For $7\le d\le 14,$ these are given by
\[
\begin{array}{c@{\hspace{2em}}c@{\hspace{2em}}c@{\hspace{2em}}c}
\lambda_{7} \approx 5.70 & \lambda_{9} \approx 7.70 & \lambda_{11} \approx 9.70 & \lambda_{13} \approx 11.70 \\[6pt]
\lambda_{8} \approx 6.70 & \lambda_{10} \approx 8.70 & \lambda_{12} \approx 10.70 & \lambda_{14} \approx 12.70.
\end{array}
\]
The corresponding  values of $\gamma_d$ (see \eqref{EqnNewCleaningIntro} or \eqref{eq:gamma_below}) are given by:
\[
\begin{array}{c@{\hspace{2em}}c@{\hspace{2em}}c@{\hspace{2em}}c}
\gamma_{7} \approx 1.7573 & \gamma_{9} \approx 1.3672 & \gamma_{11} \approx 1.2523 & \gamma_{13} \approx 1.1934 \\[6pt]
\gamma_{8} \approx 1.4839 & \gamma_{10} \approx 1.2985 & \gamma_{12} \approx 1.2189 & \gamma_{14} \approx 1.1734.
\end{array}
\]

For the axially symmetric cone in $\R^d$ for $7\le d\le 14$, we observe that the values of $\lambda_d$ are approximately affine in $d$ and the value of $\gamma_d$ is greater than $1$. 
\end{rem}

\subsection{Harnack inequality for the Jacobi equation}
With an integration-by-parts trick from Wang \cite{Wa}, the estimate \eqref{EqnDimensionalBoundIntro} leads to, in an integral form,  the rate of decay  of positive Jacobi fields on singular minimizing cones (see Lemma~\ref{LemWeakDecayJacobiField}). To iterate this estimate, we need to upgrade this integral estimate to a point-wise estimate (see  Proposition~\ref{PropDecayJacobiField}). The natural tool is a Harnack inequality for the Jacobi equation \eqref{EqnLinearizedEquation}.

In the context of minimal surfaces, this was established by Bombieri-Giusti \cite{BG}. For a general minimizer $u$ of \eqref{EqnAC}, a Harnack inequality was recently established by Edelen-Spolaor-Velichkov for harmonic functions in $\PosS$ with Neumann data on $\partial\PosS$ \cite{EdSV}.  Unfortunately, this  does not apply directly to the Jacobi equation due to the  different  boundary condition \eqref{EqnLinearizedEquation}.

For a minimizing cone $U$ with $\Sing(U)=\{0\}$, De Silva-Jerison-Shahgholian proved a Harnack inequality for \eqref{EqnLinearizedEquation}. For our purpose, we need a similar result  for general minimizing cones (possibly with nonempty $\Sing(U)^S$). This is a challenging task as we lack tools to study the concentration of Jacobi fields at singular points on the free boundary. In minimal surface theory, this was overcome by Cheeger-Naber \cite{CN}, Chodosh-Mantoulidis-Schulze \cite{CMS1} and Wang \cite{Wa} with the introduction of regularity scales, which quantifies the distance of a point to the singular set.

Inspired by this, we introduce the concept of regularity scales for the Alt-Caffarelli problem (see Definition~\ref{DefRegularityScales}). This allows us to extend the Harnack inequality by De Silva-Jerison-Shahgholian \cite{DJS} to general cones as in Theorem~\ref{ThmHarnack}.  A similar treatment works for other  equations posed in $\overline{\POSS}$, as long as  a Harnack inequality is available when the free boundary is smooth. 

\vem

This paper is organized as follows:

 In Section~\ref{sec:prelim}, we collect some preliminaries on the Alt-Caffarelli problem. In Section~\ref{SectionRegularityScales}, we introduce the concept of regularity scales and prove, for general minimizing cones, a Harnack inequality for \eqref{EqnLinearizedEquation}. In Section~\ref{sec:eigen}, we give the dimensional bound on the principal eigenvalue of the Jacobi equation  in \eqref{EqnDimensionalBoundIntro}. This allows us to quantify the rate of decay for positive Jacobi fields. In Section~\ref{sec:sep}, we use this information to study  the separation between singular sets of general minimizers. Finally in Section~\ref{sec:concl}, we establish the improved `cleaning' estimate \eqref{EqnNewCleaningIntro} as well as Theorem~\ref{ThmMainIntro} and Corollary~\ref{CorMainIntro}. 

\section*{Acknowledgements}
The authors thank Christos Mantoulidis and Zhihan Wang for fruitful discussions regarding this project. 

Part of this work was completed during the Thematic Programme on Free Boundary Problems at the Erwin Schr\"odinger International Institute for Mathematics and Physics, University of Vienna. The authors thank ESI for its hospitality and support during our visit.

\section{Preliminaries}
\label{sec:prelim}
For the reader's convenience, we collect here some preliminaries about the Alt-Caffarelli problem written in the form that will be used throughout the work. In the first subsection, we recall  properties of minimizers of the Alt-Caffarelli energy \eqref{EqnAC}.  In the second subsection, we turn to the Jacobi equation \eqref{EqnLinearizedEquation}. In the last subsection, we gather some  lemma from our previous work \cite{FeY} as well as some tools from \cite{FRS}.
\subsection{Minimizers of the Alt-Caffarelli energy}
Recall the Alt-Caffarelli energy $\En(\cdot)$ from \eqref{EqnAC}. For a given (smooth) domain $\Omega\subset\R^d$ and a \textit{nonnegative} function $g\in H^{1}(\Omega)$, a \textit{minimizer in $\Omega$ with boundary data} $g$ is a function $u\in H^1(\Omega)$ satisfying $u=g$ on $\partial\Omega$ and 
$$
\En(u;\Omega)\le \En(v;\Omega) \hem\text{ for all }v=g \text{ on }\partial\Omega.
$$
In this case, we write
\begin{equation}
\label{EqnM}
u\in\M(\Omega;g).
\end{equation} 
 If, further, we have $p\in\GU$ with the free boundary $\GU$ defined in \eqref{EqnGU}, we write
\begin{equation}
\label{EqnMP}
u\in\M_p(\Omega;g).
\end{equation} 
We often omit the domain $\Omega$ or the boundary data $g$. 

With a slight abuse of notation, we write 
\begin{equation}
\label{EqnMRD}
u\in\M(\R^d)
\end{equation}
if $u$ is a \textit{global minimizer}, that is, if $u\in\M(B_R)$ for all $R>0.$

A special class of global minimizers consists of \textit{homogeneous minimizers} or \textit{minimizing cones}. The only minimizing cones with smooth free boundaries are rotations of the \textit{flat cone} 
\begin{equation}
\label{EqnFlatCone}
u_{\mathrm{flat}}(x)=(x_d)_+.
\end{equation}
Non-flat minimizing cones are referred to as \textit{singular minimizing cones}. 

In this work, the space of minimizing cones and the space of singular minimizing cones are denoted by $\mathcal{C}(\R^d)$ and $\mathcal{SC}(\R^d)$ respectively. That is, 
\begin{align}
\label{EqnCones}
&\mathcal{C}(\R^d):=\{U\in\M(\R^d):\hem U(tx)=t\cdot U(x) \text{ for all }t>0,x\in\R^d\}, \\
&\mathcal{SC}(\R^d):=\{U\in\mathcal{C}(\R^d):\hem U \text{ is not a rotation of} \hem\FlatC\}. \nonumber
\end{align} 

\vem

A minimizer satisfies the \textit{Euler-Lagrange equation}:
\begin{prop}[\cite{AC,V}]
\label{PropEL}
Suppose that $u\in\M$, then we have, in the viscosity sense,
$$
\Delta u=0 \hem\text{ in }\PosS, \text{ and }\hem |\nabla u|=1 \text{ on }\GU.
$$
\end{prop}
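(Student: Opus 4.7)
The plan is to handle the two conclusions separately: the interior harmonicity follows from inner variations supported in the open set $\PosS$, while the gradient condition on $\GU$ is obtained by a localized competitor argument in the spirit of \cite{AC}.

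For $\Delta u=0$ in $\PosS$, I would first invoke continuity of $u$ (a standard consequence of the Alt-Caffarelli regularity theory), so that $\PosS$ is open. For any $\phi\in C_c^\infty(\PosS)$ and $|\eps|$ sufficiently small, the perturbation $u+\eps\phi$ has the same positive set as $u$, and hence the characteristic-function contribution to $\En(\cdot;\Omega)$ is unchanged. The minimality inequality $\En(u;\Omega)\le\En(u+\eps\phi;\Omega)$ thus collapses to $\int|\nabla(u+\eps\phi)|^2\,dx\ge\int|\nabla u|^2\,dx$ for both signs of $\eps$. Expanding to first order yields $\int\nabla u\cdot\nabla\phi\,dx=0$, and arbitrariness of $\phi$ gives $\Delta u=0$ distributionally in $\PosS$, hence classically by interior elliptic regularity.

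For $|\nabla u|=1$ on $\GU$ in the viscosity sense, I would verify the two one-sided comparison inequalities separately. In each case, the argument proceeds by contradiction: one assumes that a smooth test $\varphi$ touches $u$ at a free boundary point $x_0$ from the appropriate side, with $\{\varphi>0\}$ having smooth boundary through $x_0$ and $|\nabla\varphi(x_0)|$ strictly exceeding (or strictly less than) $1$. One then produces a perturbation of $u$ by translating the graph of $\varphi$ along its normal at $x_0$ by a small amount $\delta>0$, and combining with $u$ via a $\max$ or $\min$ operation in a small neighborhood of $x_0$, preserving the trace of $u$ on $\partial\Omega$. A direct Taylor expansion shows that, to leading order in $\delta$, the change in the Dirichlet part and the change in the measure of the positive set are both proportional to $\delta$, with coefficients comparable via $|\nabla\varphi(x_0)|^2$ versus $1$. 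Hence the net change in $\En$ carries a strict sign when $|\nabla\varphi(x_0)|\ne 1$, contradicting minimality.

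The main obstacle is the technical verification that these infinitesimal constructions actually yield admissible competitors, and that the asymptotic expansions above are uniform in $\delta$. Specifically, one must check that the $\max$/$\min$ replacement lies in $H^1(\Omega)$ with unchanged boundary trace, and that the leading-order computation of the Dirichlet integral is valid despite $u$ being only Lipschitz a priori. Both points rest on the Lipschitz regularity and nondegeneracy of $u$ near $\GU$, which are classical consequences of minimality \cite{AC,V}: together they guarantee that the modification is localized in a thin strip of width $O(\delta)$ where $u$ is effectively linear to leading order, so the $O(\delta)$ energy accounting can be made rigorous as $\delta\to 0^+$.
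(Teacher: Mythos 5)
The paper does not prove Proposition~\ref{PropEL} itself---it is cited directly from \cite{AC,V}---so your proposal is measured against the standard argument in those references, with which it agrees: harmonicity via compactly supported variations $u+\eps\phi$ in $\PosS$ (valid because Lipschitz continuity makes $\PosS$ open and $u$ is bounded below on $\spt\phi$), and the viscosity free boundary condition via a competitor/touching argument whose $O(\delta)$ energy bookkeeping is justified by the Lipschitz bound and nondegeneracy of Proposition~\ref{PropLipNonD}. Your sketch correctly isolates the two technical hinges (admissibility of the $\max$/$\min$ competitor in $H^1$ with unchanged trace, and uniformity of the Taylor expansion), and the ingredients you invoke to close them are the right ones.
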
 

Near a free boundary point, a minimizer is (quantitatively) Lipschitz and nondegenerate:
\begin{prop}[\cite{AC, V}]
\label{PropLipNonD}
For $u\in\M_0(B_1)$,  there are dimensional constants $c$ small and $C$ large such that 
$$
0 < cr\le\sup_{B_r}u\le Cr \hem\text{ for all }r\in(0,1/2),\hem
\text{ and }\hem |\nabla u|\le C \hem\text{in }B_{1/2}.$$
 \end{prop}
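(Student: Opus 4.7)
The plan is to prove (i) the linear growth $\sup_{B_r} u\le Cr$, (ii) the nondegeneracy $\sup_{B_r} u\ge cr$, and (iii) the Lipschitz bound $|\nabla u|\le C$ in $B_{1/2}$, in that order. Both (i) and (ii) come from energy comparisons against explicit competitors, exploiting the competition between the Dirichlet part and the volume part of $\En$; (iii) is then a direct consequence of (i) via the interior gradient estimate for harmonic functions in $\PosS$.

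For (i), the competitor I would use is the harmonic replacement $v$ of $u$ in $B_r$ (harmonic in $B_r$ with $v=u$ on $\partial B_r$). Since $u\ge 0$ is subharmonic across $\PPS$ (the free-boundary condition $|\nabla u|=1$ on $\GU$ gives $\Delta u$ as a nonnegative measure), we have $v\ge u\ge 0$ in $B_r$. The minimality inequality $\En(u;B_r)\le \En(v;B_r)$, together with harmonicity of $v$ (which kills the cross-term in $\int|\nabla u|^2-\int|\nabla v|^2$), yields
\[
\int_{B_r}|\nabla(v-u)|^2 \;\le\; |\ConS\cap B_r| \;\le\; Cr^d.
\]
Scaling via $u_r(x)=u(rx)/r$, which preserves membership in $\M_0$, reduces (i) to the universal statement $\sup_{B_{1/2}} u\le C$ for every $u\in\M_0(B_1)$. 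The latter follows by combining the displayed estimate with the mean-value property $v(0)=\fint_{\partial B_1} u$, the Harnack inequality $\sup_{B_{1/2}} v\le C v(0)$ for the nonnegative harmonic $v$, the pointwise comparison $u\le v$, and the hypothesis $u(0)=0$, bootstrapped through a dichotomy/iteration at dyadic scales.

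For (ii), I would argue by contradiction: assume $\sup_{B_r} u\le \kappa r$ for $\kappa$ small. Test against the competitor $w=u(1-\phi)$, where $\phi\in C^1_c(B_r)$ satisfies $\phi=1$ on $B_{r/2}$ and $|\nabla\phi|\le C/r$. This $w$ is admissible (matches $u$ on $\partial B_r$), vanishes on $B_{r/2}$, and hence its positive set is disjoint from $B_{r/2}$, saving at least $|\PosS\cap B_{r/2}|$ from the $\chi$-term. The Dirichlet cost of the modification is controlled by $\int u^2|\nabla\phi|^2\lesssim \kappa^2 r^d$ using $u\le\kappa r$ on the support of $\nabla\phi$. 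Minimality forces $|\PosS\cap B_{r/2}|\lesssim \kappa^2 r^d$, which for $\kappa$ sufficiently small contradicts the positive density of $\PosS$ at the free-boundary point $0\in\GU$ (a companion estimate proved in parallel by analogous competitor arguments).

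For (iii), fix $x_0\in B_{1/2}$. If $u(x_0)=0$ then $\nabla u(x_0)=0$ almost everywhere there. Otherwise set $\rho=\mathrm{dist}(x_0,\GU)$ and choose $y\in\GU$ with $|x_0-y|=\rho$. Applying (i) at $y$ (for $x_0\in B_{1/2}$ and $\rho$ small enough $B_{2\rho}(y)\subset B_1$) gives $u\le C\rho$ on $B_\rho(x_0)$. Since $u$ is harmonic in $B_\rho(x_0)\subset\PosS$, the interior gradient estimate yields $|\nabla u(x_0)|\le C\rho^{-1}\sup_{B_\rho(x_0)} u\le C$. The main obstacle is the final step of (i): the $H^1$-control of $v-u$ does not by itself convert to a pointwise upper bound on $u$, and extracting the linear growth from the hypothesis $u(0)=0$ requires either a Weiss-type scaled-energy monotonicity or a careful iterative scheme combining Harnack with subharmonicity, which is the heart of the Alt--Caffarelli regularity theory.
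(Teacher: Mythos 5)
This proposition is cited as a known result from [AC, V] and the paper does not supply its own proof, so there is no in-paper argument to compare against; the question is whether your sketch faithfully reflects the standard Alt--Caffarelli/Velichkov proof. Your overall template --- harmonic replacement for the growth bound, a cutoff competitor for nondegeneracy, interior gradient estimate for the Lipschitz bound --- is indeed the right shape, and your part (iii) is clean and correct given (i). However, the way you propose to close (i) does not work as written: combining $u\le v$, Harnack for $v$, and $v(0)=\fint_{\partial B_1}u$ gives only $\sup_{B_{1/2}}u\le C\,\fint_{\partial B_1}u$, which is circular, since bounding that average is precisely what is needed; and a nonnegative superharmonic function with bounded $H^1$-norm need not be pointwise bounded, so the $H^1$-control on $v-u$ does not upgrade to $L^\infty$ on its own. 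You acknowledge this gap honestly, but the ``Harnack + mean value + $u(0)=0$ + dichotomy'' ingredient list is not the right recipe. The standard ways to close it are either (a) the Alt--Caffarelli barrier argument: take the point $y\in\GU$ closest to $x_0$, use the interior tangent ball $B\subset\PosS$ touching $\GU$ at $y$, Hopf's lemma to get $|\nabla u(y)|\gtrsim u(x_0)/\rho$, and the viscosity supersolution condition $|\nabla u|\le 1$ at such a contact point to conclude $u(x_0)\lesssim\rho$; or (b) a careful decay/compactness scheme at dyadic scales, which is what the Velichkov notes do, but which requires tracking both the growth and the density of $\{u=0\}$ simultaneously in an iteration.

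There is also a soft circularity in your (ii): you invoke ``the positive density of $\PosS$ at a free boundary point'' to reach a contradiction, but in the standard treatments the density estimates for $\PosS$ and $\ConS$ are consequences of nondegeneracy and the Lipschitz bound, not prerequisites for them. The usual nondegeneracy argument avoids any density input: from the cutoff competitor you get $\int_{B_{r/2}}|\nabla u|^2 + |\PosS\cap B_{r/2}|\lesssim\kappa^2 r^d$, then a Poincar\'e inequality with small positive set, and the subharmonic mean-value inequality (which does not presuppose $|\nabla u|=1$ on $\GU$ --- subharmonicity of $u$ follows directly by testing minimality against $\min(u,h)$ for $h$ the harmonic replacement) give $\sup_{B_{r/4}}u\lesssim\kappa^{1+2/d}\,(r/4)$; iterating drives $\sup_{B_\rho}u/\rho\to 0$ superexponentially, and then one concludes $u\equiv 0$ in a neighborhood of the origin, contradicting $0\in\GU$. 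So your plan is in the right direction, but as presented both (i) and (ii) rest on steps that are either circular or require a different mechanism than the one you name.
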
 

\vem

The flat cone in \eqref{EqnFlatCone} is the model for minimizers with smooth free boundaries:
\begin{lem}[\cite{D1,V}]
\label{LemIOF}
Suppose that $u\in\M(B_1)$ satisifies
$$
|u-\FlatC|\le\eps \hem\text{ in }B_1.
$$
There are dimensional constants $\eps_d$ and  $\alpha$ small, and $C$ large such that if $\eps<\eps_d$, then $\PosS\cap B_{1/2}$ is a $C^{2,\alpha}$-epigraph in the $x_d$-direction, that is, we have
$$
\PosS\cap B_{1/2}=\{(x',x_d):\hem x_d>g(x')\}\cap B_{1/2},
$$
where $g:B_{1/2}\cap\{x_d=0\}\to\R$ satisfies
$$
\|g\|_{C^{2,\alpha}(B_{1/2}\cap\{x_d=0\})}\le C\eps.
$$

Moreover, we have
$$
\frac{\partial}{\partial x_d}u\ge 1/2 \hem\text{ in }\overline{\PosS}\cap B_{1/2}.
$$
\end{lem}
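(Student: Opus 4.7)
The plan is to follow the classical two-step strategy: first, De Silva's partial Harnack / improvement-of-flatness \cite{D1} upgrades the $L^\infty$ flatness hypothesis to $C^{1,\alpha}$ regularity of $\GU\cap B_{1/2}$ as a graph with linear dependence of the norm on $\eps$; second, a Kinderlehrer-Nirenberg hodograph transformation \cite{KN} bootstraps this up to $C^{2,\alpha}$ (and in fact to analyticity).

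The heart of the first step is a \emph{partial Harnack inequality}: if $(x_d-a)_+\le u\le (x_d-a-\eps)_+$ in $B_1$ with $|a|\le 1/10$, then on a dimensional ball $B_{\rho_0}$ the tightest two-sided sandwich has gap at most $(1-c)\eps$ for some dimensional $c\in(0,1)$. This is made to propagate across $\GU$ by combining interior Harnack for the harmonic $u$ (Proposition~\ref{PropEL}) with Lipschitz-nondegeneracy (Proposition~\ref{PropLipNonD}) and radial perturbations of $\FlatC$ serving as inner/outer barriers. Iterating at geometric scales $\rho_0^k$ and rescaling
\[
\tilde{u}_\eps(x):=\frac{u(x)-(x_d)_+}{\eps}
\]
on $\PosS$, equicontinuity from the partial Harnack plus standard compactness produce a H\"older limit $V$ that is a viscosity (hence, after even reflection, classical) solution of the linearized problem
\begin{equation}
\label{EqnPlanLinearized}
\Delta V=0 \hem\text{in }\{x_d>0\},\qquad \partial_{x_d}V=0 \hem\text{on }\{x_d=0\}.
\end{equation}
Since reflection across $\{x_d=0\}$ makes $V$ harmonic on $B_1$, one has $V(x)-V(0)-\nabla_{x'}V(0)\cdot x'=O(|x|^2)$. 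A Campanato-type iteration now yields at each step a unit vector $\nu$ with $|\nu-e_d|\le C\eps$ and $|u-(x\cdot\nu)_+|\le C\eps\rho_0^{1+\alpha}$ in $B_{\rho_0}$; recursing produces $\GU\cap B_{1/2}$ as a $C^{1,\alpha}$ graph $x_d=g(x')$ with $\|g\|_{C^{1,\alpha}}\le C\eps$.

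With $C^{1,\alpha}$ regularity in hand, I would switch to partial hodograph variables $y=(x',u(x))$: using $\partial_{x_d}u>0$ near $\GU$ (a consequence of $|\nabla u|=1$ on $\GU$ and the $C^{1,\alpha}$ smallness of the normal), this maps $\PosS\cap B_{1/2}$ diffeomorphically into $\{y_d>0\}$. The inverse $x_d=\Psi(y)$ then solves a quasilinear elliptic equation with an oblique boundary condition on $\{y_d=0\}$ coming from $|\nabla u|=1$, whose coefficients are $C^{0,\alpha}$ in the $C^{1,\alpha}$ data; Schauder boundary estimates upgrade $\Psi$, and hence $g$, to $C^{2,\alpha}$, the bound $\|g\|_{C^{2,\alpha}}\le C\eps$ following by rescaling from the quantitative $C^{1,\alpha}$ bound. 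The final assertion $\partial_{x_d}u\ge 1/2$ on $\overline{\PosS}\cap B_{1/2}$ is then immediate: $C^{1,\alpha}$ boundary regularity combined with $|\nabla u|=1$ on $\GU$ forces $|\nabla u-e_d|\le C\eps$ throughout $\overline{\PosS}\cap B_{1/2}$, and $\eps_d$ is chosen so that $C\eps_d<1/2$.

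The hard part will be the partial Harnack inequality in Step~1: it must travel across the free boundary, so interior Harnack chains alone are insufficient. One must construct sub-/super-solutions of the full one-phase free boundary problem that are quantitative perturbations of $\FlatC$ and then match them to $u$ both inside $\PosS$ and along $\GU$; the dimensional smallness threshold $\eps<\eps_d$ is spent precisely to prevent these barriers from degenerating as they are pushed to the free boundary.
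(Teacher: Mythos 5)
The paper does not prove this lemma; it cites it to De Silva \cite{D1} and Velichkov \cite{V}. Your two-stage outline --- De Silva's partial Harnack and improvement-of-flatness scheme to reach $C^{1,\alpha}$ with a norm bound linear in $\eps$, followed by the Kinderlehrer--Nirenberg hodograph transform and Schauder theory to bootstrap to $C^{2,\alpha}$ --- is exactly the strategy of those references, and your description of the key points (compactness to the half-space Neumann problem, Campanato iteration, barriers in the partial Harnack, and the gradient bound as a consequence of the quantitative smoothness) is accurate. One small slip: your sandwich $(x_d-a)_+\le u\le(x_d-a-\eps)_+$ has the inequalities reversed, since $(x_d-a-\eps)_+\le(x_d-a)_+$ pointwise; the intended hypothesis is $(x_d-a-\eps)_+\le u\le(x_d-a)_+$, or equivalently the normalized two-sided bound $(x_d-\sigma)_+\le u\le(x_d+\sigma)_+$. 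This is cosmetic and does not affect the argument.
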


Lemma~\ref{LemIOF} gives a decomposition of the free boundary $\GU$:

\begin{defi}
\label{DefRegSing}
For $u\in\M$ and the constant $\eps_d$ from Lemma~\ref{LemIOF}, a point $p\in\GU$ is a \textit{regular point} if there is $r>0$ such that, up to a rotation, 
$$
|u-(x_d-p_d)_+|<\eps_d r \hem\text{ in }B_r(p).
$$
In this case, we write
$$
p\in\Reg(u).
$$
Otherwise, the point is a \textit{singular point} and we write
$$
p\in\SingU.
$$
\end{defi} 

In particular, by Lemma~\ref{LemIOF},  along $\Reg(u)$ there is a well-defined unit normal, $\nu$, pointing towards $\PosS$. With equation (2.3) in \cite{JS}, the \textit{mean curvature} of the free boundary along $\Reg(u)$ can be computed as 
\begin{equation}
\label{EqnMeanCurvature}
H=-u_{\nu\nu}.
\end{equation} 
\vem

For information on the free boundary, we perform the blow-up analysis. To be precise, for $u\in\M_p(B_1)$  and $r>0$ small, the \textit{rescaled solution with center $p$  at scale $r$} is
\begin{equation}
\label{EqnRescaling}
u_{p,r}(x):=\frac{u(p+rx)}{r}.
\end{equation} 
Their limit as $r\to0$ captures the behavior of $u$ at the infinitesimal scale. To study this limit, one important tool is the  \textit{monotonicity formula} by Weiss \cite{W}
\begin{equation}
\label{EqnWeiss}
W(u;p,r):=\En(u_{p,r};B_1)-\int_{\partial B_1}u^2_{p,r},
\end{equation} 
where $\En(\cdot)$ is the Alt-Caffarelli energy from \eqref{EqnAC}.

This is a monotone quantity with respect to  $r$:
\begin{prop}[\cite{W}]
\label{PropWeiss}
For $u\in\M_p(B_1)$ (recall \eqref{EqnMP}), we have
\begin{equation}
\label{eq:eqhapp}
\frac{d}{dr}W(u;p,r)\ge 0 \hem\text{ for }r\in(0,1-|p|).
\end{equation}
In particular, the following quantity is well-defined
$$
W(u;p,0+):=\lim_{r\to0}W(u;p,r).
$$

If equality happens in \eqref{eq:eqhapp} at some $r>0$, then $u$ is homogeneous at $p$, that is,
$$
u(p+tx)=tu(p+x)\hem \text{ for all }t>0 \text{ and }x\in \R^d\hem \text{ with }\hem p+tx, p+x\in B_1.
$$
\end{prop}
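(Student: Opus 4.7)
My plan is to prove this via the classical comparison-with-1-homogeneous-extension argument. After translating so that $p=0$, I rewrite the Weiss quantity at the unit scale by means of the rescaled function $u_r(x):=u(rx)/r$: a direct change of variables gives
$$W(u;0,r)=\mathcal{E}(u_r;B_1)-\int_{\partial B_1}u_r^2.$$
Differentiating in $r$, using $\tfrac{d}{dr}u_r(\omega)=u_{r,\nu}(\omega)-u_r(\omega)$ on $\partial B_1$ and the scaling relations $\mathcal{E}(u;B_r)=r^d\mathcal{E}(u_r;B_1)$ and $|\{u>0\}\cap B_r|=r^d|\{u_r>0\}\cap B_1|$, reduces the problem to showing pointwise non-negativity of $W'(r)$. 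The absolute continuity needed to differentiate the characteristic-function term follows from the Lipschitz bound and nondegeneracy of Proposition~\ref{PropLipNonD}.

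Next, I introduce the competitor obtained by extending $u_r|_{\partial B_1}$ one-homogeneously inward: $V(x):=|x|\,u_r(x/|x|)$. A computation in polar coordinates, using that $|\nabla V|^2$ on each sphere $\partial B_\rho$ is $(u_r^2+|\nabla_\tau u_r|^2)$ rescaled, together with the fact that $\{V>0\}\cap B_1$ is the cone over $\{u_r>0\}\cap\partial B_1$, yields
$$d\cdot\mathcal{E}(V;B_1)=\int_{\partial B_1}\!\big(u_r^2+|\nabla_\tau u_r|^2\big)+\mathcal{H}^{d-1}\!\big(\{u_r>0\}\cap\partial B_1\big).$$
Combining the derivative computation with the decomposition $|\nabla u_r|^2=u_{r,\nu}^2+|\nabla_\tau u_r|^2$ on $\partial B_1$ and completing the square, I obtain the identity
$$rW'(r)=\int_{\partial B_1}(u_{r,\nu}-u_r)^2\,dS+d\big[\mathcal{E}(V;B_1)-\mathcal{E}(u_r;B_1)\big].$$
Since $V=u_r$ on $\partial B_1$, minimality of $u$ (hence of $u_r$) gives $\mathcal{E}(V;B_1)-\mathcal{E}(u_r;B_1)\ge 0$, and the boundary term is manifestly non-negative, which proves \eqref{eq:eqhapp}. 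Existence of the limit $W(u;p,0+)$ is then immediate from monotonicity and a uniform lower bound via Proposition~\ref{PropLipNonD}.

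For the equality case, if $rW'(r)=0$ at some $r\in(0,1-|p|)$, both non-negative summands must vanish. The vanishing of the boundary term gives $u_{r,\nu}=u_r$ on $\partial B_1$, i.e.\ the Euler identity $\nabla u(y)\cdot y=u(y)$ on $\partial B_r$. The vanishing of the bulk term means $V$ is itself a minimizer in $B_1$ sharing the boundary data of $u_r$. Using that $u_r$ is harmonic in $\{u_r>0\}$ with $|\nabla u_r|=1$ on its free boundary, and comparing $u_r$ with the one-homogeneous minimizer $V$ via the equality case of the comparison (together with the fact that $V$ satisfies the same Weiss energy identity $W(V;0,\cdot)\equiv W(u;0,r)$ automatically by one-homogeneity), one concludes $u_r\equiv V$ in $B_1$, hence $u$ is one-homogeneous at $p$.

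The main technical obstacle is the bulk equality step: the Alt--Caffarelli energy is not strictly convex in $\{u>0\}$, so energy equality of two admissible competitors does not a priori force pointwise equality. This is overcome by exploiting the Euler--Lagrange conditions from Proposition~\ref{PropEL} on both $u_r$ and $V$, which together with $u_{r,\nu}=u_r$ on $\partial B_r$ propagate homogeneity radially. The differentiation step itself is classically handled by working with an integrated form of the identity over $[r_1,r_2]$ to sidestep regularity issues in the characteristic-function term.
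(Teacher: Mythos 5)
Your monotonicity argument is the standard Weiss comparison-with-homogeneous-extension approach, and the key identity you derive,
$$
rW'(r)=\int_{\partial B_1}(u_{r,\nu}-u_r)^2\,dS+d\bigl[\mathcal{E}(V;B_1)-\mathcal{E}(u_r;B_1)\bigr],
$$
is correct (I checked the coefficients, including the factor $d$ in front of the energy gap and the absence of extra constants on the boundary term after completing the square). Both summands are non-negative, and the existence of $W(u;p,0+)$ follows. Note the paper itself does not prove this statement but cites Weiss [W], so the comparison is with the classical proof; yours follows it.

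The equality case has a real gap. First, a point about the statement itself: for a monotone function, $W'(r_0)=0$ at a \emph{single} $r_0$ carries no rigidity (think $W(r)=(r-r_0)^3$); the content of Weiss's theorem is that $W(r_1)=W(r_2)$ for some $r_1<r_2$, equivalently $W'\equiv 0$ on an interval, forces homogeneity. More importantly, your attempted route --- deducing $u_r\equiv V$ from the bulk energy equality --- is not carried out: you correctly observe that minimizers of the Alt--Caffarelli energy need not be unique, so $\mathcal{E}(V;B_1)=\mathcal{E}(u_r;B_1)$ with matching boundary data does not give $V=u_r$, and the gesture toward ``Euler--Lagrange conditions propagating homogeneity radially'' is not an argument. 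The correct (and simpler) route does not touch the bulk term at all: if $W'\equiv 0$ on $(r_1,r_2)$, then $\int_{\partial B_1}(u_{r,\nu}-u_r)^2\,dS=0$ for a.e.\ $r\in(r_1,r_2)$. Since $\partial_r u_r=\tfrac1r(u_{r,\nu}-u_r)$ on $\partial B_1$, this says $r\mapsto u_r(\theta)$ is constant for a.e.\ $\theta$ and all $r$ in the interval, i.e.\ $u$ is one-homogeneous on the annulus $B_{r_2}\setminus B_{r_1}$; the extension to the full ball then follows by taking $r_1\to 0$ (or by a continuity argument using the already-established limit $W(u;p,0+)$). Replace the $u_r\equiv V$ step by this direct use of the boundary term.
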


In general, to study limits of minimizers, we have the following compactness result:
\begin{lem}[\cite{V}]
\label{LemCompactness}
Suppose that $\{u_n\}\subset\M(B_1)$ satisfies
$$
\Gamma(u_n)\cap B_{1/2}\neq\emptyset \hem\text{ for each }n.
$$
Then, up to a subsequence, we have
$$
u_n\to u_\infty \hem \text{ in }L^\infty_{loc}(B_1)\cap H^1_{loc}(B_1)
$$
for some $u_\infty\in\M(B_1)$.

Along the same subsequence, we have
$$
\overline{\{u_n>0\}}\to \overline{\{u>0\}} \hem\text{ and }\hem \Gamma(u_n)\to\Gamma(u_\infty)
$$
locally uniformly in the Hausdorff distance.

Moreover, if $p_n\in\Sing(u_n)\to p\in B_1,$ then $$p\in\Sing(u_\infty).$$
\end{lem}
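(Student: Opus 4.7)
The plan is to proceed in four stages. First, for compactness: since each $u_n$ has a free boundary point in $B_{1/2}$, the Lipschitz and nondegeneracy estimates of Proposition~\ref{PropLipNonD} give uniform $L^\infty$ and Lipschitz bounds on $\{u_n\}$ over every compact subset of $B_1$. Arzel\`a--Ascoli together with a diagonal argument then produce a subsequence converging locally uniformly to some nonnegative Lipschitz $u_\infty$, and (after passing to a further subsequence) weakly in $H^1_{loc}(B_1)$.

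Next, to identify $u_\infty$ as a minimizer and to upgrade to strong $H^1_{loc}$ convergence, I would fix $B_r\Subset B_1$ such that $u_n\to u_\infty$ uniformly and in $H^1$ on $\partial B_r$ (valid for a.e.\ $r$, by Fubini). Given any competitor $v\in H^1(B_r)$ with $v=u_\infty$ on $\partial B_r$, I would interpolate on a thin annulus $B_r\setminus B_{r-\eta}$ to produce $v_n$ with $v_n=u_n$ on $\partial B_r$ and $v_n\to v$ strongly in $H^1(B_r)$. Minimality of $u_n$ gives $\En(u_n;B_r)\le \En(v_n;B_r)$. The Dirichlet term passes to the limit by weak-$H^1$ lower semicontinuity, and for the volume term the combination of local uniform convergence and nondegeneracy (Proposition~\ref{PropLipNonD}) forces $\chi_{\{u_n>0\}}\to \chi_{\{u_\infty>0\}}$ in $L^1_{loc}$. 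This yields $\En(u_\infty;B_r)\le \En(v;B_r)$, hence $u_\infty\in\M(B_1)$. Taking $v=u_\infty$ gives $\limsup_n \En(u_n;B_r)\le \En(u_\infty;B_r)$, and combined with lower semicontinuity of each term separately, this upgrades weak $H^1_{loc}$ convergence to strong.

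For Hausdorff convergence: if $x_n\in\overline{\{u_n>0\}}\to x$, then nondegeneracy gives $\sup_{B_\rho(x_n)}u_n\ge c\rho$ for small $\rho$, so local uniform convergence yields $\sup_{B_\rho(x)}u_\infty\ge c\rho$, placing $x$ in $\overline{\{u_\infty>0\}}$. The reverse inclusion uses local uniform convergence directly. Applying the same dichotomy (positivity versus vanishing) to both $\{u_n>0\}$ and $\{u_n=0\}$ gives $\Gamma(u_n)\to\Gamma(u_\infty)$ in the local Hausdorff sense.

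For the stability of singular points, I would argue by contradiction: suppose $p_n\in\Sing(u_n)\to p$ but $p\in\Reg(u_\infty)$. By Definition~\ref{DefRegSing}, there exist $r>0$ and a unit vector $e$ with $|u_\infty(x)-(e\cdot(x-p))_+|<\tfrac12\eps_d r$ on $B_r(p)$. For $n$ large, uniform convergence gives $|u_n-u_\infty|<\tfrac18\eps_d r$ on $B_r(p)$, and the elementary bound $|(e\cdot(x-p))_+-(e\cdot(x-p_n))_+|\le |p-p_n|$, combined with restricting to $B_{r/2}(p_n)\Subset B_r(p)$, yields $|u_n(x)-(e\cdot(x-p_n))_+|<\eps_d(r/2)$ there, so $p_n\in\Reg(u_n)$, a contradiction. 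The main obstacle across these steps is the $L^1_{loc}$ convergence of the volume term $\chi_{\{u_n>0\}}$, which underlies both strong $H^1_{loc}$ convergence and the minimality of $u_\infty$; once that is in place via nondegeneracy, the Hausdorff convergence and the stability of singular points reduce to quantitative nondegeneracy and the open, flatness-based characterization of regular points.
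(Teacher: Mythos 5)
Your proposal is correct and follows the standard route for Alt--Caffarelli minimizers, which is exactly what the cited reference (Velichkov's book) does: Lipschitz and nondegeneracy give local compactness and $L^1_{loc}$ convergence of the characteristic functions, the glue-and-compare argument with Fubini-chosen radii yields minimality and strong $H^1_{loc}$ convergence, and the Hausdorff and singular-point statements follow from nondegeneracy and the open flatness characterization of regular points. The one step worth stating explicitly is that to get the bound $|u_\infty-(e\cdot(x-p))_+|<\tfrac12\eps_d r$ on $B_r(p)$ (rather than just $\eps_d r$, as Definition~\ref{DefRegSing} supplies), you should invoke Lemma~\ref{LemIOF} at $p\in\Reg(u_\infty)$ to shrink the radius and improve the flatness before comparing with $u_n$.
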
 

Applying this to the rescaled solutions gives the following:
\begin{prop}[\cite{V}]
\label{PropBlowUp}
Suppose that $u\in\M_p(B_1)$  and that $u_{p,r}$ is  from \eqref{EqnRescaling}, then along a subsequence of $r_n\to0$, we have 
$$
u_{p,r_n}\to U \hem\text{ in }L^\infty_{loc}(\R^d)\cap H^1_{loc}(\R^d),
$$
where $U\in\C$ satisfies
$$
W(U;0,R)=W(u;p,0+)=|\{U>0\}\cap B_1| \hem\text{ for all }R>0.
$$

Moreover, if $p\in\SingU$, then $U\in\mathcal{SC}(\R^d)$.
\end{prop}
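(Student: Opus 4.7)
\medskip

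\noindent\textbf{Proof proposal for Proposition~\ref{PropBlowUp}.} The plan is to combine the Lipschitz/non-degeneracy estimates (Proposition~\ref{PropLipNonD}), the compactness lemma (Lemma~\ref{LemCompactness}), and the Weiss monotonicity formula (Proposition~\ref{PropWeiss}).

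First I would check that the family $\{u_{p,r}\}_{r>0}$ falls under the hypotheses of Lemma~\ref{LemCompactness}. The rescaling \eqref{EqnRescaling} preserves the minimizing property: for any $R>0$, one has $u_{p,r}\in \mathcal{M}(B_R)$ once $r<(1-|p|)/R$, since a competitor for $u_{p,r}$ in $B_R$ rescales back to a competitor for $u$ in $B_{rR}(p)\subset B_1$. Moreover, $0\in\Gamma(u_{p,r})$ for every $r$ because $p\in\Gamma(u)$, so in particular $\Gamma(u_{p,r})\cap B_{1/2}\neq\emptyset$. Hence Lemma~\ref{LemCompactness} applies on each fixed ball $B_R$; a diagonal extraction along $r_n\to 0$ produces a subsequence along which $u_{p,r_n}\to U$ in $L^\infty_{\mathrm{loc}}(\R^d)\cap H^1_{\mathrm{loc}}(\R^d)$ with $U\in\mathcal{M}(\R^d)$, and with Hausdorff convergence of positive and free-boundary sets.

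Next I would show that $U$ is a minimizing cone, i.e.\ $U\in\mathcal{C}(\R^d)$. The key identity is the scale-invariance of the Weiss functional: $W(u_{p,r};0,R)=W(u;p,rR)$. By Proposition~\ref{PropWeiss}, $r\mapsto W(u;p,r)$ is monotone, so $W(u;p,0+)$ is well-defined. Passing to the limit using the $H^1_{\mathrm{loc}}$ convergence plus the Hausdorff convergence of $\{u_{p,r_n}>0\}$ (to control $\int\chi_{\{u>0\}}$), one gets
\[
W(U;0,R)=\lim_{n\to\infty}W(u;p,r_nR)=W(u;p,0+)
\]
for every $R>0$. Thus $W(U;0,\cdot)$ is constant, so the equality case of Proposition~\ref{PropWeiss} forces $U$ to be $1$-homogeneous about the origin, i.e.\ $U\in\mathcal{C}(\R^d)$. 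The equality $W(U;0,R)=|\{U>0\}\cap B_1|$ then follows from homogeneity, since $|\nabla U|^2$ and $\chi_{\{U>0\}}$ are both $0$-homogeneous so $\int_{B_1}|\nabla U|^2=\int_{\partial B_1}U^2$ by integration in polar coordinates (using Euler's identity $x\cdot\nabla U=U$ on $\{U>0\}$), leaving only the Lebesgue term.

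Finally, for the singular statement, suppose $p\in\mathrm{Sing}(u)$. Then $0\in\mathrm{Sing}(u_{p,r})$ for every $r$: the singularity condition is scale invariant since it rules out the existence of any $\rho$ such that $|u-(x_d-p_d)_+|<\eps_d\rho$ in $B_\rho(p)$, and any such approximation for $u_{p,r}$ at $0$ would rescale back to one for $u$ at $p$. Applying the last assertion of Lemma~\ref{LemCompactness} with $p_n=0\in\mathrm{Sing}(u_{p,r_n})$ yields $0\in\mathrm{Sing}(U)$, so $U$ cannot be a rotation of $\FlatC$; hence $U\in\mathcal{SC}(\R^d)$. The main thing to be careful about is the passage of the Lebesgue term $\int\chi_{\{u_{p,r_n}>0\}}$ to $\int\chi_{\{U>0\}}$ in the Weiss functional, which is why Hausdorff convergence of the positive sets (and not merely $H^1$ convergence of the functions) is essential in the argument.
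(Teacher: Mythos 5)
Your argument is correct and is essentially the standard blow-up proof found in the cited reference \cite{V}: rescaling preserves the minimizer property, Lemma~\ref{LemCompactness} gives precompactness, the scale-invariance $W(u_{p,r};0,R)=W(u;p,rR)$ combined with the monotonicity in Proposition~\ref{PropWeiss} forces constancy of $W(U;0,\cdot)$ and hence homogeneity, the integration by parts $\int_{B_1}|\nabla U|^2=\int_{\partial B_1}U\,\partial_r U=\int_{\partial B_1}U^2$ reduces the Weiss energy to the Lebesgue term, and the scale-invariance of Definition~\ref{DefRegSing} together with the last clause of Lemma~\ref{LemCompactness} handles the singular case. The one place where you are a little terse is the convergence of the measure term: Hausdorff convergence of the positive sets alone does not in general give $L^1_{\mathrm{loc}}$ convergence of $\chi_{\{u_{p,r_n}>0\}}$; you also need the uniform density estimates for Alt--Caffarelli minimizers (which imply that $\{u>0\}$ has locally finite perimeter and uniform volume density bounds), and it is the combination of Hausdorff convergence with these density estimates that yields $\chi_{\{u_{p,r_n}>0\}}\to\chi_{\{U>0\}}$ in $L^1_{\mathrm{loc}}$. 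You flag this as the delicate point, which is good, but the density estimates deserve explicit mention since they are the actual mechanism.
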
 
Recall the space of cones and singular cones, $\C$ and $\SC$, from \eqref{EqnCones}.
 
Together with  Proposition~\ref{PropLipNonD} and Proposition~\ref{PropWeiss}, the last equation in Proposition~\ref{PropBlowUp} implies
\begin{lem}
\label{LemWeissGap}
For $u\in\M_0(B_1)$ and $0<s<r<1/2$, we have
$$
0\le W(u;0,r)-W(u;0,s)\le C
$$
for a dimensional constant $C$. 
\end{lem}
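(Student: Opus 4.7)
The plan is to prove the two inequalities separately. The lower bound is immediate: Weiss's monotonicity formula (Proposition~\ref{PropWeiss}) says exactly that $r\mapsto W(u;0,r)$ is nondecreasing on $(0,1)$, so $W(u;0,r)-W(u;0,s)\ge 0$ whenever $0<s<r$.

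For the upper bound, the idea is to bound $W(u;0,r)$ from above and $W(u;0,s)$ from below by dimensional constants, then subtract. Going back to the definition \eqref{EqnWeiss},
\[
W(u;0,r)=\int_{B_1}|\nabla u_{0,r}|^2+\bigl|\{u_{0,r}>0\}\cap B_1\bigr|-\int_{\partial B_1}u_{0,r}^2.
\]
Since $r<1/2$, the Lipschitz estimate $|\nabla u|\le C$ in $B_{1/2}$ from Proposition~\ref{PropLipNonD} translates into $|\nabla u_{0,r}|\le C$ in $B_1$ by the scaling \eqref{EqnRescaling}. Combined with the trivial bound on the measure of the positivity set and the nonnegativity of $\int_{\partial B_1}u_{0,r}^2$, this yields $W(u;0,r)\le C^2|B_1|+|B_1|=:C_d$.

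To lower bound $W(u;0,s)$, I would invoke Proposition~\ref{PropWeiss} once more (so $W(u;0,s)\ge W(u;0,0+)$) together with Proposition~\ref{PropBlowUp}, which identifies $W(u;0,0+)=|\{U>0\}\cap B_1|\ge 0$ for some blow-up cone $U\in\mathcal{C}(\R^d)$. Hence $W(u;0,s)\ge 0$, and subtracting gives
\[
W(u;0,r)-W(u;0,s)\le W(u;0,r)\le C_d,
\]
which is the claimed estimate. There is no real obstacle here; the statement is essentially a routine consequence of monotonicity together with the quantitative Lipschitz bound and nondegeneracy for minimizers.
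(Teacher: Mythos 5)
Your proof is correct and follows the same route the paper indicates: the lower bound from Weiss monotonicity (Proposition~\ref{PropWeiss}), the upper bound on $W(u;0,r)$ from the quantitative Lipschitz estimate (Proposition~\ref{PropLipNonD}) applied to the rescaling, and the nonnegativity $W(u;0,s)\ge W(u;0,0+)=|\{U>0\}\cap B_1|\ge 0$ via the blow-up identification in Proposition~\ref{PropBlowUp}. Nothing to add.
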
 

\vem

Proposition~\ref{PropBlowUp} reduces the study of $\SingU$ for a minimizer $u$ to the study of $\mathcal{SC}(\R^d)$ in \eqref{EqnCones}. In low dimensions, these are ruled out:
\begin{thm}[\cite{CJK, JS}]
\label{ThmNoSingularity}
For $d\le4,$ we have $\mathcal{SC}(\R^d)=\emptyset$.
\end{thm}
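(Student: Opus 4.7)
The plan is to proceed by induction on the dimension, combining Federer-type dimension reduction (which propagates non-isolated singularities to lower dimensions) with the classical theorems of \cite{CJK} for $d=3$ and \cite{JS} for $d=4$ (which rule out cones with an isolated singularity). The base cases $d=1,2$ are elementary: a $1$-homogeneous nonnegative minimizer on $\R$ satisfying $|\nabla U|=1$ on the free boundary (Proposition~\ref{PropEL}) together with nondegeneracy (Proposition~\ref{PropLipNonD}) is forced to be a rotation of $\FlatC$; in $\R^2$, restricting $U$ to $S^1$ reduces the problem to classifying the positive arc $\{U>0\}\cap S^1$, which the Euler--Lagrange and nondegeneracy conditions determine to be a half-circle, so $U$ is again (up to rotation) the flat cone.

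For the inductive step, assume the result for all dimensions strictly less than $d\in\{3,4\}$ and suppose toward contradiction that $U\in\mathcal{SC}(\R^d)$. First I show $\Sing(U)=\{0\}$. Indeed, if some $p\in\Sing(U)$ satisfies $p\neq 0$, Proposition~\ref{PropBlowUp} yields a blow-up limit $V\in\mathcal{C}(\R^d)$ of $U$ at $p$, and Lemma~\ref{LemCompactness} guarantees $0\in\Sing(V)$. A direct computation using the $1$-homogeneity of $U$ in the definition of the blow-up shows that $V$ is translation-invariant along the direction $p/|p|$, so after a rotation $V(x',x_d)=W(x')$ with $W\in\mathcal{C}(\R^{d-1})$ and $0\in\Sing(W)$, that is, $W\in\mathcal{SC}(\R^{d-1})$, contradicting the inductive hypothesis. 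Hence the singular set reduces to the origin.

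Having reduced to $U\in\mathcal{SC}(\R^d)$ with $\Sing(U)=\{0\}$, I invoke \cite{CJK} for $d=3$ and \cite{JS} for $d=4$. Both proofs exploit the stability of $U$: the nonnegativity of the second variation of the Alt--Caffarelli energy along compactly supported deformations yields an inequality of Jacobi type for \eqref{EqnLinearizedEquation}. In \cite{CJK}, one tests this inequality against deformations built from $U$ itself, using Euler's identity $x\cdot\nabla U=U$ to produce sharp cancellations on $\Gamma(U)\cap S^2$ and reach a contradiction in three dimensions. The $d=4$ case of \cite{JS} is considerably more delicate and hinges on a sharp spherical eigenvalue estimate for the Jacobi-type operator on $\{U>0\}\cap S^3$, the natural precursor of the lower bound obtained in Theorem~\ref{ThmDimensionalLowerBound} of the present paper.

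The genuine difficulty of the program lies entirely in the $d=4$ step of \cite{JS}: producing a dimensionally sharp eigenvalue bound in the presence of the competition between the interior Laplace equation and the Robin-type boundary condition in \eqref{EqnLinearizedEquation}. For $d\le 3$ the stability calculation of \cite{CJK} closes directly once $\Sing(U)=\{0\}$ has been arranged, but in four dimensions the sharp spherical inequality is genuinely needed, and this is where I would expect to invest the bulk of the technical effort were the proof to be carried out from scratch.
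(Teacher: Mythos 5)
Your proposal takes essentially the same approach as the paper: Theorem~\ref{ThmNoSingularity} is stated as a citation to \cite{CJK} and \cite{JS}, and your argument is precisely a dimension-reduction wrapper around those two results (base cases $d\le 2$ being classical, Federer reduction forcing $\Sing(U)=\{0\}$, then invoking \cite{CJK} for $d=3$ and \cite{JS} for $d=4$). The brief gloss you give of the internal mechanics of \cite{CJK} (``Euler's identity'' producing cancellations) is not entirely faithful to that paper's argument, but this description is not load-bearing since the proof defers to the citations, exactly as the paper does.
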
 

 A dimension reduction argument  gives:
\begin{cor}[\cite{W}]
\label{CorEstSingular}
Suppose that $u\in\M(B_1)$ in $\R^d$ for $d\ge 5$, then we have 
$$
\DimH(\SingU)\le d-d^*-1,
$$
where $\DimH(\cdot)$ denotes the Hausdorff dimension, and $d^*\ge4$ is the critical dimension in \eqref{EqnCriticalDimension}.
\end{cor}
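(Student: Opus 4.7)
The plan is Federer's dimension reduction in the Weiss monotonicity framework. At each $p \in \SingU$, Proposition~\ref{PropBlowUp} produces a singular minimizing cone blow-up $U_p \in \SC$, and by Proposition~\ref{PropWeiss} the density $\Theta(u,p) := W(u;p,0+)$ is well-defined; combined with Lemma~\ref{LemCompactness}, one gets upper semicontinuity of $\Theta$ both in $p$ and under minimizer limits. For a cone $U$, I would define the spine
$$\mathrm{spine}(U) := \{y \in \R^d : U(x+y) = U(x) \text{ for all } x \in \R^d\},$$
a linear subspace. A direct check shows that if $U \in \SC$ has $\dim \mathrm{spine}(U) = k$, then after rotation $U(x',x'') = \tilde U(x'')$ for some cone $\tilde U \in \mathcal{SC}(\R^{d-k})$ with $\mathrm{Sing}(\tilde U) = \{0\}$; by the definition \eqref{EqnCriticalDimension} of $d^*$, this forces $k \le d - d^* - 1$.

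Next I would stratify the singular set by spine dimension,
$$S_k := \{p \in \SingU : \dim \mathrm{spine}(U_p) \le k \text{ for every blow-up } U_p \text{ at } p\},$$
so the previous step gives $\SingU = S_{d-d^*-1}$. Federer's dimension reduction then delivers $\DimH(S_k) \le k$ for every $k$. If $\mathcal{H}^s(S_k) > 0$ for some $s > k$, I would pick a point $p_0 \in S_k$ of positive $s$-dimensional upper density, rescale via Proposition~\ref{PropBlowUp}, and extract a singular cone limit $U_\infty$ through Lemma~\ref{LemCompactness}. Because the rescaled singular sets converge in the Hausdorff sense to $\mathrm{Sing}(U_\infty)$ (last assertion of Lemma~\ref{LemCompactness}) and upper density is preserved under such Hausdorff limits, $\mathrm{Sing}(U_\infty)$ itself has positive $s$-dimensional measure.

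Iterating this blow-up at a point of $\mathrm{Sing}(U_\infty) \setminus \{0\}$ and invoking the equality case of Proposition~\ref{PropWeiss} at that point produces an additional translation-invariance direction for the resulting cone, strictly enlarging its spine. Pushing the iteration to its end yields a singular minimizing cone with spine of dimension $> k$, which by the first paragraph contradicts $k \le d - d^* - 1$. The main obstacle is this rigidity step --- showing that an extra density equality forces an extra invariance direction that survives the limit --- and it combines Weiss' monotonicity with the fact from Lemma~\ref{LemCompactness} that limits of singular points remain singular. Assembling $\SingU = S_{d-d^*-1}$ with $\DimH(S_{d-d^*-1}) \le d - d^* - 1$ concludes the proof.
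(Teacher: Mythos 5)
The paper offers no proof of this corollary: it is cited directly from Weiss~\cite{W} with the one-line remark that it follows from dimension reduction. Your outline is exactly the standard Federer dimension-reduction argument that underlies that citation, so in that sense you are following the same route. Two technical points are worth flagging. First, the claim that the reduced cone $\tilde U \in \mathcal{SC}(\R^{d-k})$ has $\mathrm{Sing}(\tilde U)=\{0\}$ is neither true in general nor needed: it suffices that $\tilde U$ is a \emph{singular} minimizing cone in $\R^{d-k}$, which by the definition of $d^*$ in \eqref{EqnCriticalDimension} already forces $d-k>d^*$, i.e.\ $k\le d-d^*-1$. Second, the step ``upper density is preserved under such Hausdorff limits'' is false as stated for $\mathcal{H}^s$-density; Hausdorff measure is badly behaved under Hausdorff convergence of closed sets. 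The standard fix (and the one used in the abstract Federer/Almgren dimension-reduction lemma as presented, e.g., in Velichkov~\cite{V}) is to work with the Hausdorff pre-measure (content) $\mathcal{H}^s_\infty$, which \emph{is} upper semicontinuous along Hausdorff limits of closed sets, together with the closedness of the singular condition (last assertion of Lemma~\ref{LemCompactness}). Once you replace ``density/measure'' by ``content'' at that step, the iteration you describe --- blow up at a point of positive $\mathcal{H}^s_\infty$-density, obtain a singular cone with $\mathcal{H}^s_\infty(\mathrm{Sing})>0$, blow up again at an off-spine singular point, use the equality case of Proposition~\ref{PropWeiss} to gain an invariance direction, and terminate against the spine bound from the first paragraph --- is the correct and standard proof.
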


\vem

Compared with our previous result Theorem~\ref{ThmOldThm}, the main improvement behind Theorem~\ref{ThmMainIntro} follows from  the superlinear cleaning estimate \eqref{EqnNewCleaningIntro} on the separation of between minimizers. For simplicity, we introduce the following notation for pairs of ordered minimizers in $\M$ from \eqref{EqnM}.
\begin{defi}
\label{DefOrderedMinimizers}
For a domain $\Omega\subset\R^d$ and $u,v\in\M(\Omega)$, we say that $(u,v)$ is a pair of \textit{ordered minimizers} in $\Omega$ if
$$
u\le v \hem\text{ in }\Omega, \text{ and }\hem u<v \hem\text{ in }\PosS\cap\Omega.
$$ 
In this case, we write
$$
(u,v)\in\mathcal{OM}(\Omega).
$$
\end{defi} 

We get ordered minimizers from ordered boundary data:
\begin{prop}[Proposition 3.2 in \cite{FeY}]
\label{PropOrderingFromData}
Suppose that $u,v\in\M(B_1)$ with $v\ge u$ on $\partial B_1$. If in each connected component of $\PosS\cap\partial B_1$, there is a point $x_0$ such that $u(x_0)<v(x_0)$, then
$$
u\le v\hem \text{ in }B_1.
$$
\end{prop}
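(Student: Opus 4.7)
The plan is to convert the boundary ordering $v\ge u$ into an interior pointwise ordering via two steps: a min/max energy rearrangement producing auxiliary minimizers together with a harmonic dichotomy, then a connectedness argument on $\partial B_1$ that exploits the strict-inequality hypothesis.

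\textbf{Step 1 (rearrangement and dichotomy).} Since $v\ge u$ on $\partial B_1$, the functions $\underline w:=\min(u,v)$ and $\overline w:=\max(u,v)$ agree with $u$ and $v$ on $\partial B_1$ and are admissible competitors. The a.e.\ identities
\[
|\nabla\underline w|^2+|\nabla\overline w|^2=|\nabla u|^2+|\nabla v|^2,\qquad \chi_{\{\underline w>0\}}+\chi_{\{\overline w>0\}}=\chi_{\{u>0\}}+\chi_{\{v>0\}},
\]
combined with the minimality of $u$ and $v$, force $\underline w$ and $\overline w$ to also be minimizers, with data $u$ and $v$ respectively. By Proposition~\ref{PropEL}, $\underline w$ is harmonic on $\{u>0\}\cap\{v>0\}$, and so are $u$ and $v$, so $|u-v|=u+v-2\underline w$ is a non-negative harmonic function there. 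The strong maximum principle then yields, on each connected component $C$ of $\{u>0\}\cap\{v>0\}$, the alternative $u\le v$ throughout $C$ or $u\ge v$ throughout $C$.

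\textbf{Step 2 (ruling out the wrong alternative via $\partial B_1$).} Suppose some $C$ had $u>v$ strictly. On $\partial C\cap B_1$ one of $u,v$ vanishes, and $u\ge v\ge 0$ then forces $v=0$; since $v$ is harmonic and positive on $C$, this rules out $C\Subset B_1$, so $\overline C\cap\partial B_1\ne\emptyset$. On this trace, $u\ge v$ by continuity from inside and $v\ge u$ by hypothesis, hence $u\equiv v$. Some component $K$ of $\{u>0\}\cap\partial B_1$ must meet $\overline C$: otherwise $u\equiv v\equiv 0$ on $\overline C\cap\partial B_1$ combines with $v=0$ on $\partial C\cap B_1$ to give $v\equiv 0$ on $\partial C$, contradicting the maximum principle on $v$. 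The set $\overline C\cap K$ is relatively closed in $K$ and is also relatively open: at any $p\in\overline C\cap K$ one has $u(p)=v(p)>0$, and for small $r$ the connected half-ball $B_r(p)\cap B_1$ lies in $\{u>0\}\cap\{v>0\}$ and meets $C$ (since $p\in\overline C$), hence lies entirely in $C$; so $B_r(p)\cap K\subset\overline C$. Connectedness of $K$ gives $K\subset\overline C$, and the hypothesis then supplies $x_0\in K\subset\overline C\cap\partial B_1$ with $u(x_0)<v(x_0)$, contradicting $u\equiv v$ there.

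\textbf{Step 3 (extension to all of $B_1$).} Step 2 gives $u\le v$ on $\{u>0\}\cap\{v>0\}$. If $x\in\{u>0\}\cap\{v=0\}$ existed, let $U$ be the component of $\{u>0\}$ through $x$. If $U\subset\{v=0\}$, then $v=0$ on $\overline U\cap\partial B_1$ forces $u=0$ there by $v\ge u$, which combined with $u=0$ on $\partial U\cap B_1$ yields $u\equiv 0$ on $\partial U$, contradicting harmonicity and positivity of $u$ on $U$. Hence $U$ meets $\{v>0\}$; by connectedness of $U$ there is $q\in U$ with $v(q)=0$ and $q_n\in U\cap\{v>0\}$ converging to $q$. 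Each $q_n$ lies in some component $C_n$ of $\{u>0\}\cap\{v>0\}$, so Step~2 yields $u(q_n)\le v(q_n)$; the limit gives $u(q)\le 0$, contradicting $u(q)>0$. Thus $\{u>0\}\subset\{v>0\}$, and $u\le v$ in $B_1$ follows. The hard part is the clopen-connectedness argument of Step~2, which is what transmits the single-point strict-inequality hypothesis at $x_0\in K$ into a contradiction along the entire boundary trace $\overline C\cap\partial B_1$.
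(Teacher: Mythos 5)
Your proof is correct, and it uses what is essentially the canonical route: the $\min/\max$ energy rearrangement to produce auxiliary minimizers $\underline w,\overline w$, harmonicity of $\underline w$ on $\{u>0\}\cap\{v>0\}$ (together with $u,v$), the strong maximum principle applied to $u-\underline w\ge 0$ and $v-\underline w\ge 0$ to obtain the per-component dichotomy, a clopen-connectedness argument on a boundary trace $K$ to feed the single-point strict-inequality hypothesis through, and finally the extension to $\{u>0\}\subset\{v>0\}$. The paper itself only cites Proposition~3.2 of \cite{FeY} without reproducing its proof, so a line-by-line comparison is not possible, but your argument matches the standard approach for ordering results of this kind.

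Two small points worth flagging, neither of which is a gap. First, Step~2 tacitly uses continuity of $u$ and $v$ up to $\partial B_1$ (in showing $B_r(p)\cap B_1\subset\{u>0\}\cap\{v>0\}$ near a boundary point $p$ of $\overline C\cap K$, and in passing to the limit for the identity $u\equiv v$ on $\overline C\cap\partial B_1$); this is implicit in the statement of the proposition, which refers to pointwise values and positivity sets on $\partial B_1$, so it is a fair assumption, but it is worth saying explicitly. Second, your dichotomy in Step~1 is actually a touch sharper than what you state: the strong maximum principle applied to $u-\underline w$ gives \emph{either} $u\le v$ on $C$ \emph{or} $u>v$ strictly on $C$, and it is precisely this strict form that makes the boundary-trace contradiction in Step~2 clean. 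You use the strict form in Step~2 anyway, so the write-up is consistent; just note that the phrase ``$u\ge v$ throughout $C$'' in Step~1 understates what the maximum principle gives.
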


One fundamental result for ordered minimizers is the strict maximum principle (recall that $\Gamma(\cdot)$ denotes the free boundary \eqref{EqnGU}):
\begin{thm}[Corollary 1.2 in \cite{EdSV}]
\label{ThmStricMax}
Suppose that $(u,v)\in\OM(\Omega)$ and that $\{v>0\}$ is connected in $\Omega$, then 
$$
\GU\cap\Gamma(v)\cap\Omega=\emptyset.
$$
\end{thm}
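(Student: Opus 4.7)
I would argue by contradiction. Suppose there exists $p \in \Gamma(u) \cap \Gamma(v) \cap \Omega$, and set $w := v - u$ on $\{v > 0\} \cap \Omega$. By Definition~\ref{DefOrderedMinimizers}, $w > 0$ strictly on $\{u > 0\}$; on $\{u = 0\} \cap \{v > 0\}$, $w = v > 0$. Hence $w > 0$ throughout $\{v > 0\} \cap \Omega$ while $w(p) = 0$, making $p$ a boundary minimum. Moreover, $w$ is superharmonic in $\{v > 0\} \cap \Omega$: $v$ is harmonic there by Proposition~\ref{PropEL}, while the distributional Laplacian of $u$ equals the non-negative measure $\mathcal{H}^{d-1}$ restricted to $\Gamma(u)$ (encoding the jump $|\nabla u| = 1$ across the free boundary), so $\Delta w \leq 0$. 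The plan is to apply a Hopf-type boundary point lemma at $p$ to obtain $\partial_\nu w(p) > 0$ for the inward unit normal $\nu$; combined with $|\nabla u(p)| = |\nabla v(p)| = 1$ and the alignment of the two gradients along $\nu$, this will force $\partial_\nu w(p) = 0$, yielding the contradiction.

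To justify Hopf at $p$, I would perform a blow-up via Proposition~\ref{PropBlowUp}: non-trivial cones $U \leq V$ in $\mathcal{C}(\R^d)$ arise as blow-up limits of $u$ and $v$ respectively, and the connectedness of $\{v > 0\}$ passes to the limit so that $\{V > 0\}$ is connected. If $V$ is the flat cone $(x_d)_+$, Lemma~\ref{LemIOF} gives that $\Gamma(v)$ is $C^{2,\alpha}$ near $p$ and $\{v > 0\}$ has the interior ball property there. A standard comparison argument, using superharmonicity of $V - U$ on the half-space $\{V > 0\}$ with vanishing data on $\{x_d = 0\}$, forces $U = V$, so $p \in \Reg(u)$ as well. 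The classical Hopf lemma applied to the harmonic function $w$ on $\{u > 0\}$ then delivers the contradiction directly.

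The essential difficulty is when $V$ is a singular minimizing cone, where $\{v > 0\}$ lacks an interior ball at $p$. Here I would proceed by induction on the ambient dimension, using Theorem~\ref{ThmNoSingularity} as the base case ($d \leq 4$ admits no singular cones). Given $(U, V)$ with $V$ singular, one locates a point $q \in \Gamma(V) \setminus \{0\}$ where $V$ is regular (such $q$ exists since, by a dimension-reduction argument akin to Corollary~\ref{CorEstSingular}, the singular set of $V$ has strictly smaller dimension than $\Gamma(V)$), and applies the inductive hypothesis to the translation-invariant tangent cone of the pair $(U, V)$ at $q$, which lives effectively in one lower dimension along the invariant direction. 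The main obstacle, and the reason the connectedness hypothesis on $\{v > 0\}$ is indispensable, is precisely this descent step: without connectedness, components of the positive sets could decouple at the touching point and the blow-up pair could fail the strict ordering $U < V$ on $\{U > 0\}$, breaking the recursion.
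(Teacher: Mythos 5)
Two remarks on context before the substance. First, the paper does not prove Theorem~\ref{ThmStricMax}; it is imported verbatim as Corollary~1.2 of \cite{EdSV}, so there is no in-paper proof to compare your argument against. Second, your treatment of the regular-touching case is essentially sound: if the blow-up of both $u$ and $v$ at $p$ is the flat cone, then both free boundaries are $C^{2,\alpha}$ and tangent through $p$, the function $v-u$ is positive and harmonic in $\{u>0\}$ near $p$ and vanishes at $p$, and the classical Hopf lemma gives $\partial_\nu(v-u)(p)>0$, contradicting $|\nabla u(p)|=|\nabla v(p)|=1$ together with tangency. (The claim that $V=(x_d)_+$ and $U\le V$ with $0\in\Gamma(U)$ force $U=V$ is also correct, though it is cleaner to see via the density lower bound $|\{U>0\}\cap B_1|\ge|B_1|/2$ combined with $\{U>0\}\subset\{x_d>0\}$ than via your superharmonicity remark.)

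The substance of the theorem, however, is the singular case, and there your dimension-reduction scheme has a genuine gap in two places. You need a point $q\in\Gamma(U)\cap\Gamma(V)\setminus\{0\}$ at which the blow-up pair can be further linearized, but you only produce $q\in\Gamma(V)$. A priori $\Gamma(U)$ and $\Gamma(V)$ can touch \emph{only} at the origin --- this is precisely the configuration the theorem must exclude --- and for a regular $q\in\Gamma(V)$ with $q$ either in the interior of $\{U>0\}$ or outside $\overline{\{U>0\}}$, the second blow-up at $q$ yields a pair $(U',V')$ with $\Gamma(U')=\emptyset$, so the inductive hypothesis is vacuous and no contradiction is obtained. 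More fundamentally, the very first blow-up can collapse the ordering: $u<v$ on $\{u>0\}$ need not pass to the limit, and one may well obtain $U=V$ (indeed this is what one expects, since Weiss densities are forced to agree), in which case the pair $(U,V)$ is not in $\OM(\R^d)$ and your recursion never launches. Both failures point to the same conclusion: the strong maximum principle cannot be proved purely at the infinitesimal (cone) level by iterated blow-up, because the strictness that encodes $u\ne v$ lives at positive scales and is lost in the limit. The argument in \cite{EdSV} is accordingly quantitative at positive scales near the touching point rather than a blow-up dichotomy, which is a genuinely different mechanism from the one you propose.
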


Under a flatness assumption, the difference between ordered minimizers enjoys the Harnack inequality:
\begin{lem}[Proposition 5.1 in \cite{DJS}]
\label{LemHarnackForDifference}
Suppose that $(u,v)\in\OM(B_1)$ satisfies
$$
|u-(x_d)_+|+|v-(x_d)_+|<\eps \hem\text{ in }B_1.
$$

There are dimensional constants $\eps_d,\alpha$ small and $C$ large such that if $\eps<\eps_d$, then the difference $\varphi:=v-u$ satisfies
$$
\varphi(x)/\varphi(y)\le C \hem\text{ for all }x,y\in\overline{\PosS}\cap B_{1/2},
$$
and 
$$
\|\varphi\|_{C^{2,\alpha}(\overline{\PosS}\cap B_{1/2})}\le C.
$$
\end{lem}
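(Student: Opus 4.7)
The strategy is to use the $\eps$-flatness of both $u$ and $v$ to view $\varphi:=v-u$ as a positive harmonic function in a smooth domain satisfying a Robin-type boundary condition on $\Gamma(u)$, and then to invoke classical boundary Harnack and Schauder estimates for such problems.

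\emph{Step 1 (Smooth free boundaries).} I apply Lemma~\ref{LemIOF} to $u$ and to $v$ on, say, $B_{3/4}$, obtaining that $\{u>0\}\cap B_{3/4}$ and $\{v>0\}\cap B_{3/4}$ are $C^{2,\alpha}$ epigraphs $\{x_d>f_u(x')\}$ and $\{x_d>f_v(x')\}$ with $\|f_u\|_{C^{2,\alpha}}+\|f_v\|_{C^{2,\alpha}}\le C\eps$, and the nondegeneracy bound $\partial_{x_d}u,\partial_{x_d}v\ge 1/2$ up to their respective free boundaries. Since $(u,v)\in\OM(B_1)$ we have $f_u\ge f_v$; the strict maximum principle from Theorem~\ref{ThmStricMax} (applicable because $\{v>0\}\cap B_{3/4}$ is connected by flatness) upgrades this to $\Gamma(u)\cap\Gamma(v)=\emptyset$. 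Hence $v>0$ on $\overline{\{u>0\}}\cap B_{3/4}$, so $\varphi$ is strictly positive on this set, and by Proposition~\ref{PropEL} it is harmonic in $\{u>0\}\cap B_{3/4}$.

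\emph{Step 2 (Linearized boundary condition).} At any $p\in\Gamma(u)$, $u(p)=0$ gives $\varphi(p)=v(p)$; a Taylor expansion of the smooth function $v$ along the inner normal to $\Gamma(v)$ at the projection $q$ of $p$, together with $|\nabla v|=1$ on $\Gamma(v)$, yields $\varphi(p)=|p-q|\,(1+O(\eps))$. Tangentially differentiating the two free boundary conditions $|\nabla u|^2=1$ on $\Gamma(u)$ and $|\nabla v|^2=1$ on $\Gamma(v)$, and then expanding to first order in $\varphi$ using the $C^{2,\alpha}$ estimates of Step~1 and the formula \eqref{EqnMeanCurvature}, I obtain a Robin-type condition on $\Gamma(u)\cap B_{5/8}$ of the form
\[
\varphi_\nu + H\varphi = R,
\]
with $\|H\|_{C^\alpha}\le C\eps$ and $\|R\|_{C^\alpha}\le C\eps\,\|\varphi\|_\infty$, where $\nu$ is the inner unit normal to $\Gamma(u)$ and $H$ is its mean curvature. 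Combined with $\Delta\varphi=0$ from Step~1, this is a closed elliptic boundary value problem for $\varphi$.

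\emph{Step 3 (Harnack and Schauder).} A $C^{2,\alpha}$ diffeomorphism straightens $\Gamma(u)\cap B_{5/8}$ to a piece of $\{x_d=0\}$ and converts the problem to a positive solution of a uniformly elliptic equation in a half-ball with a Robin boundary condition having $C^\alpha$ coefficients of small norm. The classical boundary Harnack inequality for such problems, proved e.g.\ by reflection across $\{x_d=0\}$ or by Krylov--Safonov weak Harnack applied to a suitable extension, yields the ratio bound on $\overline{\{u>0\}}\cap B_{1/2}$. The a priori bound $\|\varphi\|_{L^\infty}\le C$ from Proposition~\ref{PropLipNonD}, together with Schauder estimates up to the (now flat) boundary applied to the straightened problem, produces the stated $C^{2,\alpha}$ bound. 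The delicate step is Step~2: linearizing the free boundary conditions across two distinct but close smooth surfaces $\Gamma(u)$ and $\Gamma(v)$ and verifying that the remainder $R$ is genuinely of lower order, which requires using the $\eps$-smallness of the free boundaries in concert with the nondegeneracy $\partial_{x_d}u,\partial_{x_d}v\ge 1/2$ from Lemma~\ref{LemIOF}.
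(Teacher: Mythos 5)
This lemma is cited in the paper directly from De Silva--Jerison--Shahgholian (Proposition~5.1 in \cite{DJS}) and is not reproved; it appears as a black-box preliminary. So there is no in-paper proof to compare against, and your attempt has to be judged on its own.

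Your outline is consistent with what the DJS argument is doing, and the key structural ideas are present: flatness improvement (Lemma~\ref{LemIOF}) to get $C^{2,\alpha}$ epigraphs, the strict maximum principle (Theorem~\ref{ThmStricMax}) to separate $\Gamma(u)$ from $\Gamma(v)$ so that $\varphi = v-u$ is strictly positive up to $\Gamma(u)$, derivation of an approximate Robin condition $\varphi_\nu + H\varphi = R$ with small error, and then boundary Harnack and Schauder for a mild oblique-derivative perturbation of the Neumann problem (for which Lieberman's book, already in the paper's bibliography, is a standard reference). Where your write-up is thin is exactly where you flag it: the error term $R$. The Taylor expansion of $v$ at $p\in\Gamma(u)$ around its projection $q\in\Gamma(v)$ gives $\varphi_\nu(p) + H\varphi(p) = -\tfrac12|\nu_u(p)-\nu_v(q)|^2 + O(\eps\varphi) + O(\varphi^{1+\alpha})$, where the first-order cancellation requires both $v_{\nu_v\nu_v} = -H_v$ (which follows from $\Delta v = 0$ and $v\equiv 0$ on $\Gamma(v)$, i.e.\ equation \eqref{EqnMeanCurvature}) and $v_{\nu_v\tau_v} = 0$ (which is what tangential differentiation of $|\nabla v|^2 = 1$ buys you). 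The term $|\nu_u - \nu_v|^2$ is \emph{not} automatically of the form $O(\eps\,\varphi)$: you need to use that $g := f_u - f_v\ge 0$ with $\|g\|_{C^{2,\alpha}}\le C\eps$, which gives $|\nabla g| \lesssim \sqrt{\eps\,g}$, hence $|\nu_u-\nu_v|^2 \lesssim \eps\, g \sim \eps\,\varphi$ at the relevant point. This makes the remainder pointwise $O(\eps^\alpha \varphi)$ (not the stated $\eps\|\varphi\|_\infty$ in $C^\alpha$ --- a minor discrepancy, but one you should be careful about when feeding the remainder into the boundary Harnack argument, since a bound by $\|\varphi\|_\infty$ rather than by $\varphi$ itself is a weaker hypothesis for the Robin perturbation estimate). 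With those details supplied, the argument closes.
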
 

\vem

Global minimizers in $\M(\R^d)$ from \eqref{EqnMRD} are more rigid than minimizers in bounded domains. For instance, we have the following:
\begin{thm}[Theorem 2.3 in \cite{EdSV}]
\label{ThmGlobalConnected}
For $U\in\M(\R^d)$,  its positive set $\{U>0\}$ is connected.
\end{thm}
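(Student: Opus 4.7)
The plan is to argue by contradiction: assume $\{U>0\}$ has at least two disjoint nonempty open components $\Omega_1$ and $\Omega_2$, and extract a contradiction by producing a second global minimizer whose free boundary touches that of $U$ in a forbidden way.

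As a warm-up I would rule out bounded components: if $\Omega_0\subset\{U>0\}$ were bounded, then $U$ would be nonnegative and harmonic on $\Omega_0$ by Proposition~\ref{PropEL} and would vanish on $\partial\Omega_0$, forcing $U\equiv 0$ on $\Omega_0$ by the maximum principle — a contradiction. Hence every component is unbounded. Next I would show that the restriction $V:=U\chi_{\Omega_1}$ is itself a global minimizer. Given any nonnegative $\tilde V\in H^1(B_R)$ with $\tilde V=V$ on $\partial B_R$, define the companion competitor $\tilde U:=\tilde V+(U-V)=\tilde V+U\chi_{\R^d\setminus\Omega_1}$ for $U$ in $B_R$. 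Splitting integrals over $\Omega_1$ and $\{U>0\}\setminus\Omega_1$ and integrating by parts on the latter, using that $U$ is harmonic there with $U=0$ and $\partial_\nu U=-1$ on the corresponding part of $\Gamma(U)$ (Proposition~\ref{PropEL}), a direct computation yields
\[
\En(\tilde U;B_R)-\En(U;B_R)\le \En(\tilde V;B_R)-\En(V;B_R)-2\int_{B_R\cap\Gamma(U)\setminus\partial\Omega_1}\tilde V\, d\HausM.
\]
Since $\tilde V\ge 0$ and the left-hand side is nonnegative by global minimality of $U$, this forces $\En(V;B_R)\le\En(\tilde V;B_R)$, so $V\in\M(\R^d)$.

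Third, with $V,U\in\M(\R^d)$, $V\le U$, $V=U$ on $\Omega_1$, and $V=0<U$ on $\Omega_2$, the free boundaries $\Gamma(V)=\partial\Omega_1$ and $\Gamma(U)\supset\partial\Omega_1$ touch along all of $\partial\Omega_1$. The natural contradiction should come from a strict maximum principle, but $(V,U)\notin\OM(\R^d)$ in the sense of Definition~\ref{DefOrderedMinimizers}, since $V\equiv U$ on $\{V>0\}$, so Theorem~\ref{ThmStricMax} does not apply verbatim. To get around this I would distinguish two cases. If there exists $p_0\in\partial\Omega_1\cap\partial\Omega_2$, then blow-up at $p_0$ via Proposition~\ref{PropBlowUp} produces ordered minimizing cones $V_\infty\le U_\infty$ at the origin whose positive sets are still disconnected, with nontrivial components inherited from $\Omega_1$ and $\Omega_2$ meeting at $0$. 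If instead $\overline{\Omega_1}\cap\overline{\Omega_2}=\emptyset$, a blow-down along scales $R_n\to\infty$ using Proposition~\ref{PropBlowUp} and Lemma~\ref{LemCompactness} collapses $\Omega_1/R_n$ and $\Omega_2/R_n$ to cones that must meet at the origin by the unboundedness established in Step 1, again reducing to the cone case.

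The main obstacle is the cone case itself: ruling out a minimizing cone $U_\infty\in\C$ whose positive set is disconnected. Here homogeneity allows one to work with the trace on the sphere $\Sph$ as in Subsection~\ref{SubsectionPrincipalEigenValue}; the two pieces are themselves minimizing cones sharing only the apex $0$, and the contradiction should come either from a direct variational competitor that bridges the two components (lowering the measure-plus-gradient cost by a first-order perturbation exploiting $|\nabla U_\infty|=1$ on each piece of $\Gamma(U_\infty)$), or from a refined Hopf-type boundary-point argument at the apex where both cones vanish to the same linear rate by Proposition~\ref{PropLipNonD}. The competition between the gradient cost and the measure gain in the Alt-Caffarelli functional, encoded via the Weiss quantity in Proposition~\ref{PropWeiss}, should force the two pieces to fuse, contradicting their a priori separation.
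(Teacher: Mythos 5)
The paper does not prove this statement: it quotes Theorem~2.3 of Edelen--Spolaor--Velichkov \cite{EdSV} verbatim and uses it as a black box, so there is no internal proof to compare against. Assessing your outline on its own merits, Steps~1 and~2 are correct and well executed. Ruling out bounded components by the maximum principle is clean, and the verification that $V := U\chi_{\Omega_1}$ is a global minimizer — building the companion competitor $\tilde U = \tilde V + (U-V)$, splitting the energy across the disjoint supports, and integrating by parts on $\{U-V>0\}$ using $\Delta(U-V)=0$ there and $|\nabla(U-V)|=1$ on $\Gamma(U)\setminus\partial\Omega_1$ to produce the term $-2\int_{\Gamma(U)\setminus\partial\Omega_1}\tilde V\,d\HausM$ — is a genuine and correct argument.

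The gap is in Steps~3 and~4. In Step~3 the reduction to cones is not rigorous: under blow-up at $p_0$ the two rescaled components $\Omega_1/r_n$ and $\Omega_2/r_n$ need not stay disjoint in the limit — the positive set of $U_\infty$ can be a single connected cone in which the blow-up limits of $V$ and of $U\chi_{\Omega_2}$ sit as proper, strictly smaller minimizers — and under blow-down one of the two components can simply disappear from the limit. You would need to rule these out (e.g.\ by applying Theorem~\ref{ThmStricMax} to the pair $(V_\infty,U_\infty)$ when $V_\infty<U_\infty$ on $\{V_\infty>0\}$ and $\{U_\infty>0\}$ is connected), and you do not carry out that case analysis. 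More seriously, Step~4 — ruling out a minimizing cone with disconnected positive set — is not a proof at all: you offer two heuristics (``a direct variational competitor that bridges the two components'' and ``a refined Hopf-type boundary-point argument'') without executing either, and neither is obviously workable. A minimizing cone need not be regular at its apex, so there is no Hopf lemma available there; and bridging the two spherical caps by a perturbation would have to beat simultaneously the measure gain and the Dirichlet cost, which is exactly the nontrivial competition at the heart of the Alt--Caffarelli problem. The essential missing ingredient is a mechanism that forces two global minimizers with disjoint positive sets to interact quantitatively — for instance, sliding one of the pieces until the supports first touch and exploiting the sharp $\ge 1/2$ lower bound on the Weiss density of $\{U>0\}$ at a free boundary point together with the positive density of $\{U=0\}$ — and your outline gestures at such a mechanism but does not supply it.
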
 

We also have  the following quantitative estimates for global minimizers. Recall the mean curvature $H$ from \eqref{EqnMeanCurvature}.
\begin{lem}[Lemma 2.5 from \cite{EdSV}]
\label{LemGlobalQuant}
For $U\in\M(\R^d)$, we have
$$
|\nabla U|(p)\le 1 \hem\text{ for all }p\in\{U>0\},
\hem\text{ and } 
H(p)\ge0 \hem\text{ for all }p\in\Reg(U). 
$$

If equality is achieved in either inequality at a point, then, up to a rotation, we have
$U=\FlatC.$
\end{lem}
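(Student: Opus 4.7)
The plan is to analyze the auxiliary function $w := |\nabla U|^2 - 1$ in $\POSS$. Since $\Delta U = 0$ in $\POSS$ by Proposition~\ref{PropEL}, Bochner's identity gives $\Delta w = 2|D^2 U|^2 \ge 0$, so $w$ is subharmonic and smooth inside $\POSS$. On $\Reg(U)$, the Euler--Lagrange equation gives $|\nabla U| = 1$, so $w = 0$ there. Both inequalities, together with the rigidity statements, will fall out of the strong maximum principle and Hopf's lemma applied to $w$.

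For the gradient bound, I would argue by contradiction and compactness. Suppose $M := \sup_{\POSS} |\nabla U| > 1$; this supremum is finite by rescaling Proposition~\ref{PropLipNonD} at every free boundary point. Pick $p_n \in \POSS$ with $|\nabla U|(p_n) \to M$, set $r_n := \mathrm{dist}(p_n, \Gamma(U))$, and consider the rescaled translates $W_n(x) := U(p_n + r_n x)/r_n$. These are global minimizers satisfying $|\nabla W_n| \le M$, $\Gamma(W_n) \cap \partial B_1 \ne \emptyset$, and uniform Lipschitz bounds. Lemma~\ref{LemCompactness} applied on balls of increasing radius together with a diagonal argument yields $W_n \to W_\infty \in \M(\R^d)$ locally uniformly, and the Hausdorff convergence of free boundaries combined with $\mathrm{dist}(0, \Gamma(W_n)) = 1$ forces $0 \in \{W_\infty > 0\}$. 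Interior elliptic regularity then upgrades the convergence to $C^\infty_{loc}$ near $0$, so $|\nabla W_\infty|(0) = M$ and $|\nabla W_\infty| \le M$ throughout. The strong maximum principle for the subharmonic $|\nabla W_\infty|^2$ gives $|\nabla W_\infty|^2 \equiv M^2$ on the connected component of $\{W_\infty > 0\}$ containing $0$, hence $D^2 W_\infty \equiv 0$ and $W_\infty$ is affine on this component. Together with $W_\infty \ge 0$ vanishing on the boundary of this component, and with Theorem~\ref{ThmGlobalConnected}, the entire $\{W_\infty > 0\}$ is a half-space with smooth boundary contained in $\Reg(W_\infty)$; the free boundary condition then yields $M = |\nabla W_\infty| = 1$, contradicting $M > 1$.

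Both rigidity cases reduce to a single computation. For the gradient case, if $|\nabla U|(p_0) = 1$ at an interior point, then $w$ attains its maximum at $p_0$, so the strong maximum principle gives $w \equiv 0$ on a connected component; Bochner forces $D^2 U \equiv 0$, so $U$ is affine; $U \ge 0$ vanishing on the boundary, Theorem~\ref{ThmGlobalConnected}, and $|\nabla U| = 1$ on $\Reg(U)$ together identify $U$ with a rotation of $\FlatC$. For the mean curvature part, at any $p \in \Reg(U)$ Lemma~\ref{LemIOF} provides $C^{2,\alpha}$ regularity of $\Gamma(U)$ and of $U$ up to it; expanding $\Delta U = 0$ in Fermi coordinates along $\Gamma(U)$ and using $U = 0$ and $\nabla U = \nu$ on the boundary gives $\partial_\nu(|\nabla U|^2)(p) = 2U_{\nu\nu}(p) = -2H(p)$ via \eqref{EqnMeanCurvature}. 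Since $w \le 0$ in $\POSS$ by the gradient bound, $w(p) = 0$, and $\Gamma(U)$ is smooth at $p$, Hopf's lemma forces either $w \equiv 0$ on the component (reducing to the gradient rigidity case and yielding $U = \FlatC$ up to rotation) or a strict outer-normal inequality, which is equivalent to $H(p) > 0$. In either case $H(p) \ge 0$, and equality $H(p) = 0$ excludes the strict alternative and again gives $U = \FlatC$ up to rotation.

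The main obstacle is the global gradient bound itself. The subtlety is that $\POSS$ is unbounded and $\Gamma(U)$ may contain singularities where the boundary identity $|\nabla U| = 1$ is unavailable, so a plain maximum principle on $\POSS$ does not close the argument. The blow-up strategy above bypasses the singular set entirely by choosing the scale $r_n = \mathrm{dist}(p_n, \Gamma(U))$, which guarantees that $\Gamma(W_\infty)$ touches $\partial B_1$ while the maximum of $|\nabla W_\infty|$ is attained at $0$; the strong maximum principle then collapses the positivity component of $W_\infty$ to a half-space, exactly where the free boundary condition can be applied. The essential ingredients are the global Lipschitz control (to keep $M$ finite and the limits nontrivial) and the Hausdorff convergence of free boundaries in Lemma~\ref{LemCompactness} (to propagate the distance information to the limit).
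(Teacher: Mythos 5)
The paper does not prove this lemma: it states it as a citation to \cite{EdSV} (Lemma~2.5 there), so there is no in-paper argument to compare against. Your reconstruction is correct and uses the natural ingredients: Bochner's identity makes $w:=|\nabla U|^2-1$ subharmonic in $\POSS$, the blow-up of $U$ at points nearly realizing $\sup|\nabla U|$, rescaled by the distance to the free boundary, produces a global minimizer with an interior maximum of $|\nabla\cdot|^2$; the strong maximum principle then forces it to be affine, hence (using Theorem~\ref{ThmGlobalConnected}) a multiple of a half-plane solution, and the viscosity condition $|\nabla U|=1$ on the free boundary from Proposition~\ref{PropEL} kills the constant and gives the gradient bound. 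The mean-curvature inequality and both rigidity statements follow exactly as you say from the Hopf lemma applied to $w$ at a regular free boundary point together with the identity $\partial_\nu(|\nabla U|^2)=2U_{\nu\nu}=-2H$, with the strong maximum principle handling the degenerate alternative.

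Two minor points worth flagging. First, your blow-up argument invokes Theorem~\ref{ThmGlobalConnected}, which is also drawn from \cite{EdSV}; this is harmless here since connectedness of $\{U>0\}$ is established independently of the gradient bound in that reference, but one should be mindful of the internal ordering when citing both. Second, the finiteness of $M=\sup_{\POSS}|\nabla U|$ deserves one more line than you give it: Proposition~\ref{PropLipNonD} alone is local near free boundary points, and to control $|\nabla U|$ at points deep in $\POSS$ one should combine the interior gradient estimate for harmonic functions with the linear growth of $U$ away from $\Gamma(U)$, which itself comes from the Lipschitz bound at free boundary points and the maximum principle. This gives a dimensional bound on $M$ a priori, after which your blow-up closes the argument.
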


\subsection{Jacobi equation around global minimizers}
\label{SubsectionJacobi}
For $U\in\M(\R^d)$ (see \eqref{EqnMRD}), the \textit{Jacobi equation around $U$} is given by
\begin{equation}
\label{EqnJacobi}
\begin{cases}
\Delta\varphi=0 &\text{ in }\{U>0\},\\
\varphi_\nu+H\varphi=0 &\text{ on }\Reg(U).
\end{cases}
\end{equation} 
Here $\nu$ is the inner normal on the regular part of the free boundary (see Definition~\ref{DefRegSing}), and $H$ is the mean curvature from \eqref{EqnMeanCurvature}. For derivation of this equation, we refer the reader to \cite{DJS,EdSV,JS}. 

The following is one of the main focuses of this work:
\begin{defi}
\label{DefPostiveJacobiField}
For $U\in\M(\R^d)$, a function $\varphi\in C^{2,\alpha}_{loc}(\overline{\POSS}\backslash\Sing(U)),
$ is a \textit{positive Jacobi field on }$U$ if 
$$\varphi>0\hem \text{ in }\overline{\POSS}\backslash\Sing(U),$$
and it satisfies the Jacobi equation around $U$ in  \eqref{EqnJacobi}.
\end{defi} 

For $U\in\SC$ from \eqref{EqnCones} with $\Sing(U)=\{0\}$, De Silva-Jerison-Shahgholian \cite{DJS} analyzed minimizers around $U$.  One key ingredient in their argument is the following estimate for positive Jacobi fields. 
\begin{lem}[Theorem 5.2 in \cite{DJS}]
\label{LemSchauder}
Suppose that $\varphi$ is a solution to
$$
\begin{cases}
\Delta\varphi=0 &\text{ in }B_1\cap\Omega,\\
\varphi_\nu+H\varphi=0 &\text{ on }B_1\cap\partial\Omega,
\end{cases}
$$
where the domain $\Omega\subset\R^d$ is of the form
$$
\Omega:=\{(x',x_d):\hem x_d>g(x')\}
$$for a $C^{2,\alpha}$-function $g$ with $g(0)=0$. 

Then we have the following:
\begin{enumerate}
\item{For a constant $C$ depending only on $d$ and $\|g\|_{C^{2,\alpha}}$, we have 
$$
\|\varphi\|_{C^{2,\alpha}(B_{1/2}\cap\overline{\Omega})}\le C\|\varphi\|_{L^\infty(B_1\cap\Omega)};
$$}
\item{For $\varphi>0$, there is a constant $C$ depending only on $d$ and $\|g\|_{C^{2,\alpha}}$ such that 
$$
\sup_{B_{1/2}\cap\overline{\Omega}}\varphi\le C\inf_{B_{1/2}\cap\overline{\Omega}}\varphi.
$$}
\end{enumerate}
\end{lem}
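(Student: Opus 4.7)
The plan is to reduce both statements to the classical Schauder and Harnack theory for uniformly elliptic equations with oblique (Robin-type) boundary conditions on a flat boundary, after straightening the free boundary.

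First, I would flatten the boundary by the $C^{2,\alpha}$-diffeomorphism $\Phi(x', x_d) := (x', x_d - g(x'))$, which maps $\Omega \cap B_1$ into a subset of the half-space $\{y_d > 0\}$ and fixes the origin (since $g(0)=0$). Setting $\tilde\varphi(y) := \varphi(\Phi^{-1}(y))$, the equation $\Delta\varphi = 0$ transforms into a uniformly elliptic equation
\[
a^{ij}(y)\,\partial_{ij}\tilde\varphi + b^i(y)\,\partial_i\tilde\varphi = 0 \qquad \text{in a half-ball } B_{r_0}^+,
\]
with coefficients $a^{ij}, b^i \in C^{0,\alpha}$ whose norms and ellipticity constants are controlled by $\|g\|_{C^{2,\alpha}}$. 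The condition $\varphi_\nu + H\varphi = 0$ transforms into an oblique Robin condition
\[
\beta^i(y')\,\partial_i \tilde\varphi + c(y')\,\tilde\varphi = 0 \qquad \text{on } \{y_d = 0\} \cap B_{r_0},
\]
where $\beta$ is a $C^{1,\alpha}$ direction transverse to $\{y_d=0\}$ and the zero-order coefficient $c$ is $C^{0,\alpha}$, since $H$ is built from second derivatives of $g$. All norms depend only on $d$ and $\|g\|_{C^{2,\alpha}}$.

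For statement (1), I would then invoke the classical Schauder estimates for oblique derivative problems on a half-ball (as in Gilbarg--Trudinger, Chapter 6, or Lieberman's monograph on oblique boundary problems), which yield
\[
\|\tilde\varphi\|_{C^{2,\alpha}(B^+_{r_0/2})} \le C\,\|\tilde\varphi\|_{L^\infty(B^+_{r_0})}.
\]
Pulling back through $\Phi$ and covering $B_{1/2} \cap \overline\Omega$ by finitely many such half-balls gives the stated estimate. For statement (2), the same flattening reduces matters to the Harnack inequality for positive solutions of uniformly elliptic equations with oblique boundary conditions, a by-now classical result obtained by extending the Krylov--Safonov theory to oblique problems (again in Lieberman); a finite covering argument then produces the asserted $\sup/\inf$ bound.

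The only real point of care is verifying that the flattening produces data of exactly the regularity required by the classical theory. Because $g \in C^{2,\alpha}$, the transformed principal and drift coefficients are in $C^{0,\alpha}$, and the coefficient $c$ (inheriting the regularity of $H$) is also $C^{0,\alpha}$; this matches precisely the hypotheses of the Schauder and oblique Harnack theorems, so no regularity is lost. The hypothesis $\varphi > 0$ plays no role in (1) but is essential for the Harnack-type conclusion in (2). I do not anticipate a serious obstacle beyond the bookkeeping of the change of variables.
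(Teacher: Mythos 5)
The paper does not supply its own proof of this lemma; it is quoted verbatim as Theorem~5.2 of \cite{DJS}. Your flattening-plus-classical-oblique-theory argument (Schauder estimates and the Krylov--Safonov/Lieberman Harnack inequality for oblique derivative problems, after straightening the boundary via $\Phi(x',x_d)=(x',x_d-g(x'))$) is the standard route and, as far as one can tell, the same one used in \cite{DJS}; the regularity bookkeeping you outline ($a^{ij}\in C^{1,\alpha}$, $b^i\in C^{0,\alpha}$, oblique direction $\beta\in C^{1,\alpha}$, boundary zero-order coefficient $c=H\in C^{0,\alpha}$, all controlled by $\|g\|_{C^{2,\alpha}}$) is correct.
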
 

\vem

Around $U\in\C$  from \eqref{EqnCones}, the Jacobi equation  leads to an eigenvalue problem on the sphere. 
The \textit{principal eigenvalue of the Jacobi operator around $U$} is the unique value $\lambda(U)$ for which the following system has a solution (recall the notation for spherical intersections $E^S$ from \eqref{EqnES})
\begin{equation}
\label{EqnSphJacobi}
\begin{cases}
\Delta_{\Sph}\varphi=\lambda(U) \varphi &\text{ in }\{U>0\}^S,\\
\varphi_\nu+H\varphi=0 &\text{ on }\Reg(U)^S,\\
\varphi>0 &\text{ in }\overline{\POSS}^S.
\end{cases}
\end{equation} 

This value $\lambda(U)$ is characterized by a variational problem \cite{JS}:
\begin{equation}
\label{EqnVariationalPbOnSph}
-\lambda(U)=\inf\{\mathcal{Q}_U(f):\hem f\in C_c^\infty(\partial B_1\backslash\Sing(U))\}.
\end{equation} 
Here  $\mathcal{Q}_U(f)$ is the quotient
\begin{equation}
\label{EqnQuotient}
\mathcal{Q}_U(f):=\frac{\int_{\POSS^S}|\nabla_\tau f|^2-\int_{\Reg(U)^S}Hf^2}{\int_{\POSS^S}f^2},
\end{equation}
where $\nabla_\tau$ denotes the tangential part of the gradient operator.

Stability gives the following bound on $\lambda(U)$ (see \cite[Proposition 2.1]{JS}):
\begin{equation}
\label{PropStabilityEigenBound}
\lambda(U)\le\frac{(d-2)^2}4\qquad\text{for}\quad U \in \C. 
\end{equation}
Consequently,  there are real roots to the  equation
\begin{equation}
\label{eq:gamma_below}
\gamma(\gamma-d+2)+\lambda(U)=0.
\end{equation}
Suppose that $\gamma$ is such a root and that $\varphi$ is a solution to \eqref{EqnSphJacobi}, then  we see that  $|x|^{-\gamma}\varphi(x/|x|)$ is a positive Jacobi field on $U$.

\vem

This links the decay of positive Jacobi fields to a lower bound on $\lambda(U)$. For our purpose, we need to bound the following
\begin{equation}
\label{EqnLambdaD}
-\Lambda_d:=\inf_{U\in\SC}\{\mathcal{Q}_U(f):\hem f\in C_c^\infty(\partial B_1\backslash\Sing(U))\}.  
\end{equation} 
Recall the space of singular cones $\SC$ from \eqref{EqnCones}.

By testing \eqref{EqnVariationalPbOnSph} with constants, it is not difficult to see that $\lambda(U)>0$ for each $U\in\SC$. However, even for cones with $\Sing(U)=\{0\}$, it is not known whether there is a dimensional bound on $\lambda(U)$. Such an estimate is at the heart of our argument. See Theorem~\ref{ThmDimensionalLowerBound}.

\subsection{Tools and previous results on generic regularity}
For many results on  generic regularity, the guiding principle is provided by the following tool from geometric measure theory:

\begin{lem}[Corollary 7.8 in \cite{FRS}]
\label{LemGenericReduction}
Suppose that $S$ is a subset of $\R^d\times(-1,1)$, and that $\pi_x$ and $\pi_t$ are the canonical projections 
$$
\pi_x(x,t)=x \hem\text{ and }\hem \pi_t(x,t)=t.
$$

Suppose that for some $p,m>0$, we have
\begin{enumerate}
\item{$\DimH(\pi_x(S))\le m$; and }
\item{For each $(x_0,t_0)\in S$ and $\eps>0$,  there exists $\rho=\rho(x_0,t_0,\eps)>0$ such that 
$$
S\cap\{(x,t)\in B_{\rho}(x_0)\times(-1,1):\hem t-t_0>|x-x_0|^{p-\eps}\}=\emptyset.
$$}
\end{enumerate}
Then we have the followings:
\begin{enumerate}
\item[(i)]{ if $m< p$, then
$$
\DimH(\pi_t(S))\le m/p;
$$}
\item[(ii)]{ if $m\ge p,$ then 
$$
\DimH(S\cap\pi_t^{-1}(t))\le m-p\hem \text{ for almost every } t\in(-1,1).
$$}
\end{enumerate}
\end{lem}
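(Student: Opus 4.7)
The plan is to reduce both parts to a single cover-and-Fubini argument in which hypothesis (1) provides a good cover of the spatial projection $\pi_x(S)$ and hypothesis (2) upgrades it to a cover of $S$ by thin slabs.

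First preparation: since Hausdorff dimension is countably stable, it suffices to prove the conclusions for $S\cap K$ with $K\subset\R^d\times(-1,1)$ compact. On such a $K$, and for a fixed $\eps>0$, a standard compactness/exhaustion argument produces a uniform $\rho_0>0$ such that (2) applies at every base point of $S\cap K$ with $\rho\equiv\rho_0$. Second preparation: condition (2) is one-sided, but applying it in turn at $(x_0,t_0)$ and $(x_1,t_1)\in S\cap K$ with $|x_0-x_1|<\rho_0$ yields both $t_1-t_0\le|x_0-x_1|^{p-\eps}$ and $t_0-t_1\le|x_0-x_1|^{p-\eps}$. Hence for every $r<\rho_0$ and every sample point $(x_0,t_0)\in S\cap K$,
\[
S\cap\bigl(B_r(x_0)\times(-1,1)\bigr)\;\subset\;B_r(x_0)\times I_{x_0,r}
\]
for some $t$-interval $I_{x_0,r}$ with $|I_{x_0,r}|\le 2r^{p-\eps}$.

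With these reductions in hand, fix an auxiliary $\eta>0$. Since $\DimH(\pi_x(S))\le m$, for every $\delta\in(0,\rho_0/2)$ we can choose a countable cover $\{B_{r_i}(x_i)\}$ of $\pi_x(S)$ with $r_i<\delta$ and $\sum_i r_i^{m+\eta}$ arbitrarily small; for each nonempty intersection, pick a sample $(x_i',t_i')\in S$ with $x_i'\in B_{r_i}(x_i)$, so that the slab estimate forces $S\cap(B_{r_i}(x_i)\times(-1,1))\subset B_{2r_i}(x_i')\times I_i$ with $|I_i|\le 2(2r_i)^{p-\eps}$. For part (i), the intervals $\{I_i\}$ cover $\pi_t(S)$, so taking $s=(m+\eta)/(p-\eps)$ gives
\[
\mathcal{H}^{s}_{C\delta^{p-\eps}}(\pi_t(S))\;\lesssim\;\sum_i r_i^{s(p-\eps)}\;=\;\sum_i r_i^{m+\eta},
\]
and sending first $\delta\to 0$ and then $\eps,\eta\to 0$ yields $\DimH(\pi_t(S))\le m/p$. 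For part (ii), fix $s>m-p$ and choose $\eps,\eta$ small enough that $s+p-\eps>m+\eta$; covering $S_t:=S\cap\pi_t^{-1}(t)$ by $\{B_{2r_i}(x_i'):\,t\in I_i\}$ and integrating in $t$,
\[
\int_{-1}^{1}\mathcal{H}^{s}_{4\delta}(S_t)\,dt\;\lesssim\;\sum_i r_i^{s}|I_i|\;\lesssim\;\sum_i r_i^{s+p-\eps}\;\le\;\delta^{\,s+p-\eps-m-\eta}\sum_i r_i^{m+\eta}\;\longrightarrow\;0
\]
as $\delta\to 0$. Monotone convergence (in $\delta$) then gives $\mathcal{H}^s(S_t)=0$ for almost every $t$, and letting $s\searrow m-p$ along a countable sequence concludes $\DimH(S_t)\le m-p$ for almost every $t$.

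The only genuinely delicate step I anticipate is the preparatory compactness argument producing a uniform $\rho_0$ and the two-sided slab derived from it; once this is in place, the remainder is clean bookkeeping of the auxiliary parameters $\eps$ and $\eta$ in the exponents, carried out so as to exchange $p-\eps$ for $p$ in the limit. Note that the dichotomy $m<p$ versus $m\ge p$ in the statement is not a true case split in the proof: it merely records when the exponent $m/p$ of (i) becomes $\ge 1$ and therefore trivial on the one-dimensional target, making the slicewise bound (ii) the informative conclusion.
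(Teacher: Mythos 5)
The paper does not prove this lemma: it is imported verbatim as Corollary 7.8 of Figalli--Ros-Oton--Serra \cite{FRS}, so there is no in-paper argument to compare against. Judged on its own, your proof is essentially correct and follows the standard covering-plus-Fubini strategy that underlies the cited result: a two-sided slab estimate from the one-sided hypothesis, covers of $\pi_x(S)$ at scale $\delta$ upgraded to slab covers of $S$, and either a direct $\mathcal{H}^s$ estimate on $\pi_t(S)$ for (i) or a coarea/Fubini estimate for (ii), with the auxiliary exponents $\eps,\eta$ absorbed in the limit. Your closing remark that the dichotomy $m<p$ versus $m\ge p$ is cosmetic is also correct: when $m<p$, conclusion (i) forces $|\pi_t(S)|=0$, which already gives $S\cap\pi_t^{-1}(t)=\emptyset$ a.e., so (ii) is subsumed; when $m\ge p$, conclusion (i) is vacuous on a one-dimensional target.

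The one place where the write-up is loose is the preparatory reduction. You invoke ``a standard compactness/exhaustion argument'' to extract a uniform $\rho_0$ on $S\cap K$ for compact $K$, but compactness alone does not do this: the hypothesis imposes no regularity on $\rho(\cdot,\cdot,\eps)$, so it may degenerate to $0$ along a sequence in $S\cap K$. What one actually needs is the countable decomposition $S=\bigcup_n S_n^\eps$ with $S_n^\eps:=\{(x_0,t_0)\in S:\rho(x_0,t_0,\eps)>1/n\}$, on which $\rho_0=1/n$ works uniformly. This is harmless for exactly the reason you state at the outset --- countable stability of $\DimH$ for (i), and the fact that a countable intersection of full-measure $t$-sets is of full measure for (ii) --- but it is a genuinely different mechanism from compactness and is worth stating precisely, since it is the only point where the argument could otherwise be read as having a gap. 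Two further small remarks. First, the two-sided slab requires \emph{both} endpoints to lie in the same $S_n^\eps$ (so that the radius $1/n$ is admissible at each); your sample points must therefore be chosen from $S_n^\eps$, not merely from $S$, which your cover construction does implicitly. Second, $\mathcal{H}^s_{4\delta}(S_t)$ need not be a measurable function of $t$ for arbitrary $S$; the clean fix is to integrate the explicit majorant $C\sum_i r_i^s\mathbf{1}_{I_i}(t)$, pass to a subsequence $\delta_k$ with $\sum_k\int(\cdot)\,dt<\infty$, and use monotonicity of $\mathcal{H}^s_\delta$ in $\delta$ to conclude $\mathcal{H}^s(S_t)=0$ a.e.\ --- this is what your appeal to ``monotone convergence'' amounts to, and it is fine once spelled out this way.
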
 
Here $\DimH(\cdot)$ denotes the Hausdorff dimension. With \cite[Lemma 4.2]{FeR}, the case (i) holds for the Minkowski dimension.

With this, generic regularity relies on two ingredients, corresponding to the two assumptions on $S$ in Lemma~\ref{LemGenericReduction}. The main improvement in this work is for the second assumption (see \eqref{EqnNewCleaningIntro}). For the first assumption, we have already achieved the optimal bound. 

\begin{prop}[Propositions 4.6 and 4.7 in \cite{FeY}]
\label{PropOptimalResultFeY}
Suppose that $\FamPhi$ satisfies Assumption~\ref{Ass}. Denote the singular points in space-time as
$$
S:=\{(x,t)\in B_1\times(-1,1):\hem x\in\Sing(u_t) \text{ for some }u_t\in\M(B_1;g_t)\},
$$
then we have
\begin{enumerate}
\item{ if $d=d^*+1$, then
$$
\pi_t(S) \text{ is countable;}
$$}
\item{ if $d\ge d^*+2$, then 
$$
\DimH(\pi_x(S))\le d-d^*-1.
$$}
\end{enumerate}

\end{prop}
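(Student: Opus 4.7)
The plan is to combine the per-slice bound $\DimH(\Sing(u_t))\le d-d^*-1$ from Corollary~\ref{CorEstSingular} with a linear cleaning estimate for the family. The cleaning forces the time parameter to become a Lipschitz function on $\pi_x(S)$, after which a purely spatial Federer dimension reduction runs directly on $\pi_x(S)$.

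\emph{Step 1 (ordering and linear cleaning).} Assumption~\ref{Ass} together with Proposition~\ref{PropOrderingFromData} yields $(u_s,u_t)\in \OM(B_1)$ whenever $-1<s<t<1$. Next I would establish
$$
\mathrm{dist}\bigl(\Gamma(u_s),\Gamma(u_t)\bigr)\ge c\,(t-s), \qquad -1<s<t<1,
$$
by a translation-and-touching scheme as in Lemma~4.3 of \cite{FeY}: if the two free boundaries were closer than $c(t-s)$, translating $u_t$ by a suitable vector would produce an ordered pair whose free boundaries first touch, and a Hopf-type estimate on the regular portion (via Lemma~\ref{LemHarnackForDifference}) together with a barrier comparison at singular touching points would contradict the separation $u_t-u_s\ge t-s$ inherited from admissibility. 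An immediate consequence is that $\pi_x|_S$ is injective with $c^{-1}$-Lipschitz inverse $x\mapsto (x,t(x))$.

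\emph{Step 2 (proof of (2)).} I would run a Federer-type spatial dimension reduction on the relatively closed set $\pi_x(S)\subset B_1$. Fix $x_0\in \pi_x(S)$ with $t_0:=t(x_0)$. By Step~1, any $(y,s)\in S$ with $y\in B_r(x_0)$ satisfies $|s-t_0|\le r/c$. Hence, for any sequence $r_n\to 0$ and $(y_n,s_n)\in S$ with $|y_n-x_0|\le r_n$, a diagonal application of Lemma~\ref{LemCompactness} together with Proposition~\ref{PropBlowUp} yields the convergence of $u_{s_n}(x_0+r_n\cdot)/r_n$ to a spatial tangent cone $U\in \SC$ of $u_{t_0}$ at $x_0$, along which the rescaled singular points $(y_n-x_0)/r_n$ sub-converge to a point of $\Sing(U)$. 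Thus every blow-up of $\pi_x(S)$ at $x_0$ is contained in $\Sing(U)$ for some $U\in \SC$. Since $\SC$ is stable under further blow-ups and $\DimH(\Sing(U))\le d-d^*-1$ for every $U\in\SC$ by Corollary~\ref{CorEstSingular}, an iterated Federer dimension reduction yields $\DimH(\pi_x(S))\le d-d^*-1$. In effect, the cleaning estimate collapses the whole family into a single tangent cone at each blow-up scale, so the spatial estimate for $\pi_x(S)$ matches the single-minimizer estimate.

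\emph{Step 3 (proof of (1); main obstacle).} When $d=d^*+1$, every $U\in \SC(\R^d)$ has $\Sing(U)=\{0\}$; indeed, a non-vertex singular point of $U$ would, by translation invariance of the cone along its radial direction, blow up to a non-flat minimizing cone in $\R^{d-1}=\R^{d^*}$, contradicting the definition of $d^*$. Consequently each $\Sing(u_t)$ is locally finite and Step~2 gives $\DimH(\pi_x(S))\le 0$. To upgrade this to ``$\pi_t(S)$ is countable'', I would show that every $(x_0,t_0)\in S$ is isolated in $S$; then $S$ is a discrete subset of the separable space $B_1\times(-1,1)$ and so $\pi_t(S)$ is at most countable. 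This isolation is the main obstacle, since Hausdorff dimension zero does not by itself imply countability. I would argue by blow-up: given distinct $(x_n,t_n)\in S$ converging to $(x_0,t_0)$, injectivity of $\pi_x|_S$ forces $x_n\ne x_0$, so setting $r_n:=|x_n-x_0|$ and applying Step~2 with $(y_n,s_n)=(x_n,t_n)$, the limit of $(x_n-x_0)/r_n$ would be a unit-length point of $\Sing(U)$ for some $U\in \SC(\R^{d^*+1})$, contradicting $\Sing(U)=\{0\}$.
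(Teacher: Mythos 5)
The paper does not prove this proposition here---it is cited from Propositions 4.6 and 4.7 of \cite{FeY}, and the remark following the statement explains that those proofs run under Assumption~\ref{Ass} because they use only the ordering (Proposition~\ref{PropOrderingFromData}) and the strict maximum principle (Theorem~\ref{ThmStricMax}), not the cleaning lemma of \cite{FeY}. Your proposal is therefore a genuinely different route: you build a linear cleaning into Step~1 so that $\pi_x|_S$ is bi-Lipschitz, and then run a spatial Federer reduction. The crucial assertion in Step~2, namely that $u_{s_n}(x_0+r_n\cdot)/r_n$ converges to a tangent cone $U\in\SC$ of $u_{t_0}$ at $x_0$, is not justified. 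With only linear cleaning you know $|s_n-t_0|\le r_n/c$, so the rescaled difference $(u_{s_n}-u_{t_0})(x_0+r_n\cdot)/r_n$ is of order one rather than vanishing, and the blow-up of $u_{s_n}$ along $r_n$ need not coincide with any blow-up of $u_{t_0}$. Moreover, $x_0$ is in general not a free boundary point of $u_{s_n}$, so the Weiss monotonicity (Proposition~\ref{PropWeiss}) does not apply to the pair $(u_{s_n},x_0)$, and the subsequential limit furnished by Lemma~\ref{LemCompactness} is a global minimizer that need not be homogeneous---Proposition~\ref{PropBlowUp} gives cone structure only for blow-ups of a \emph{single} minimizer at one of its \emph{own} free boundary points.

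The gap sinks Step~3 as well: the contradiction there requires $\Sing(U)=\{0\}$, which you get because $U$ is a singular cone in $\R^{d^*+1}$. If instead $U$ is only a global minimizer, Corollary~\ref{CorEstSingular} bounds $\DimH(\Sing(U))\le 0$, which does not exclude a singular point on $\partial B_1$. A repair along your lines would need either a space-time dimension reduction with a uniform Weiss-type pinching controlling the whole family near $(x_0,t_0)$---the route of \cite{FRS} and, presumably, of \cite{FeY}---or a superlinear cleaning so that $|s_n-t_0|=o(r_n)$; but the superlinear cleaning is precisely the new content of Sections~\ref{sec:sep}--\ref{sec:concl} and is not available at this stage.
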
 
\begin{rem}
In  \cite[Propositions 4.6 and 4.7]{FeY} the assumptions on the boundary data are stronger than Assumption~\ref{Ass}. Upon a quick inspection of the proofs, we see that  Assumption~\ref{Ass} is enough to bound the projection of the singular set, as stated here (up to replacing \cite[Corollary 4.4]{FeY} with Theorem~\ref{ThmStricMax} above). The stronger assumptions in \cite{FeY} are only used on the cleaning lemma there.
\end{rem}

\section{Regularity scales and Harnack inequality for the Jacobi equation}
\label{SectionRegularityScales}
To obtain uniform estimates on the regular part of the free boundary, one obstruction is that regular points can converge to singular points. To overcome this,  we restrict to points with `quantified regularity'. This motivates the introduction of regularity scales, inspired by similar concepts in harmonic maps and minimal surfaces \cite{CN, CMS1,Wa}.

In the first subsection, we give the definition and basic properties of regularity scales. In the second subsection, we apply these  to establish the Harnack inequality for the Jacobi equation around a minimizing cone. This is one of the key technical contributions of this work. 

\subsection{Definition and basic properties of regularity scales}
For a given domain $\Omega\subset\R^d$, recall the space of minimizers $\M(\Omega)$ from \eqref{EqnM} and the definitions for  $\Reg(\cdot)$ and $\Sing(\cdot)$ from Definition~\ref{DefRegSing}. 

We introduce the regularity scales for a minimizer of the Alt-Caffarelli energy \eqref{EqnAC}:
\begin{defi}
\label{DefRegularityScales}
For  $u\in\M(\Omega)$, its \textit{regularity scale} in $\Omega$ is a function 
$$\rho_{u,\Omega}:\overline{\PosS}\cap\Omega\to[0,+\infty]$$
 given by
$$
\rho_{u,\Omega}(p):=\sup\{r>0: \hem B_r(p)\subset\Omega \hem\text{ and }\hem |D^2u|<r^{-1} \hem\text{ in }B_r(p)\cap\PosS\}.
$$
If the supremum is over an empty set, we define 
$$
\rho_{u,\Omega}(p)=0.
$$

For given $\rho\ge0$, the \textit{collection of points with regularity scale $\rho$} is denoted by $\mathcal{R}_{u,\Omega}(\rho)$, that is,
$$
\mathcal{R}_{u,\Omega}(\rho):=\{p\in\overline{\PosS}\cap\Omega:\hem \rho_{u,\Omega}(p)>\rho\}.
$$

For simplicity, we often omit the function $u$ or the domain $\Omega$ from the notations. 
\end{defi} 
The relation between the size of the ball and the bound on the Hessian is motivated by the following symmetry. We omit its elementary proof.
\begin{lem}
\label{LemScalingRS}
For $u\in\M(\Omega)$ and $r>0$, let the rescaled minimizer be defined as
$
u_r(x):=u(rx)/r.
$
Then we have
$$
\rho_{u_{r},\Omega/r}(p/r)= \frac{\rho_{u,\Omega}(p)}{r}\qquad\text{for}\quad p \in \overline{\{u > 0\}}\cap \Omega. 
$$
\end{lem}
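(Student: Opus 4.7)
The plan is to unfold both sides of the claimed equality directly from Definition~\ref{DefRegularityScales} and verify that they describe the same supremum, up to the natural rescaling. The argument is purely definitional and amounts to tracking a change of variables, so I do not anticipate any obstacle; I will simply be careful with the correspondence between the radius variable on each side.

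First I would record the rescaling identities $\nabla u_r(x)=\nabla u(rx)$ and $D^2 u_r(x)=r\,D^2 u(rx)$, together with $\{u_r>0\}=\frac{1}{r}\{u>0\}$ and the obvious fact $B_s(p/r)\subset \Omega/r \iff B_{rs}(p)\subset \Omega$. These are the only facts from calculus that enter.

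Next I would write out $\rho_{u_r,\Omega/r}(p/r)$ as the supremum over $s>0$ such that $B_s(p/r)\subset \Omega/r$ and $|D^2 u_r|<s^{-1}$ on $B_s(p/r)\cap \{u_r>0\}$. Substituting $y=rx$, the first condition becomes $B_{rs}(p)\subset\Omega$, while the second becomes $r\,|D^2 u(y)|<s^{-1}$, i.e., $|D^2 u(y)|<(rs)^{-1}$, for all $y\in B_{rs}(p)\cap\{u>0\}$. Setting $r'=rs$, we see that the admissible values of $s$ are precisely those of the form $r'/r$ where $r'$ is admissible in the definition of $\rho_{u,\Omega}(p)$.

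Taking the supremum therefore gives $\rho_{u_r,\Omega/r}(p/r)=\rho_{u,\Omega}(p)/r$, handling the convention $\sup\emptyset=0$ uniformly on both sides. Finally I would note that $p\in \overline{\{u>0\}}\cap\Omega$ corresponds to $p/r\in \overline{\{u_r>0\}}\cap(\Omega/r)$, so the identity is well-posed on the stated domain. This completes the proof.
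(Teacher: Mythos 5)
Your proof is correct and is precisely the elementary change-of-variables computation the paper explicitly declines to write out ("We omit its elementary proof"). The rescaling identities $D^2u_r(x)=r\,D^2u(rx)$, $\{u_r>0\}=\tfrac1r\{u>0\}$, and $B_s(p/r)\subset\Omega/r\Leftrightarrow B_{rs}(p)\subset\Omega$ are all accurate, and the substitution $r'=rs$ establishes the bijection between admissible radii on the two sides, handling $\sup\emptyset=0$ consistently.
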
 

The following properties are direct consequences of Definition~\ref{DefRegularityScales}:
\begin{lem}
\label{LemBasicPropertiesOfRS}
Suppose that $u\in\M(\Omega)$, then we have:
\begin{enumerate}
 \item{the regularity scale $\rho_{u,\Omega}$ is  continuous  on $\overline{\PosS}\cap\Omega$;}
 \item{for each $\rho\ge0$, the collection $\mathcal{R}_{u,\Omega}(\rho)$ is open in $\overline{\PosS}\cap\Omega$; and}
 \item{for $0<\rho_1<\rho_2$, we have
 $$
 \mathcal{R}_{u,\Omega}(\rho_1)\supset \overline{\PosS}\cap\Omega\cap B_{\rho_2-\rho_1}(\mathcal{R}_{u,\Omega}(\rho_2)).
 $$}
\end{enumerate}
\end{lem}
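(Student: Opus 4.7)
The plan centers on a single \emph{ball-nesting} observation: if $B_r(p)\subset\Omega$ and $|D^2u|<r^{-1}$ on $B_r(p)\cap\PosS$, then for every $q\in\R^d$ with $s:=r-|p-q|>0$ we have $B_s(q)\subset B_r(p)\subset\Omega$ and $|D^2u|<r^{-1}\le s^{-1}$ on $B_s(q)\cap\PosS$, so $\rho_{u,\Omega}(q)\ge s$. I would first prove (3) directly from this, then read off (2) and (1) as consequences.

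\textbf{Step 1: proof of (3).} Given $p\in\mathcal{R}_{u,\Omega}(\rho_2)$, by definition $\rho_{u,\Omega}(p)>\rho_2$, so one can pick $r\in(\rho_2,\rho_{u,\Omega}(p))$ with $B_r(p)\subset\Omega$ and $|D^2u|<r^{-1}$ on $B_r(p)\cap\PosS$. For any $q\in\overline{\PosS}\cap\Omega$ with $|p-q|<\rho_2-\rho_1$, the nesting observation yields
\[
\rho_{u,\Omega}(q)\ge r-|p-q|>\rho_2-(\rho_2-\rho_1)=\rho_1,
\]
so $q\in\mathcal{R}_{u,\Omega}(\rho_1)$, proving (3).

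\textbf{Step 2: proof of (2).} If $p\in\mathcal{R}_{u,\Omega}(\rho)$, pick $\rho'\in(\rho,\rho_{u,\Omega}(p))$; then $p\in\mathcal{R}_{u,\Omega}(\rho')$, and (3) gives
\[
B_{\rho'-\rho}(p)\cap\overline{\PosS}\cap\Omega\subset\mathcal{R}_{u,\Omega}(\rho),
\]
which is exactly openness in the relative topology of $\overline{\PosS}\cap\Omega$.

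\textbf{Step 3: proof of (1).} The same nesting argument, now letting $r\uparrow\rho_{u,\Omega}(p)$ in Step 1, upgrades the estimate in Step 1 to a $1$-Lipschitz bound
\[
\rho_{u,\Omega}(q)\ge\rho_{u,\Omega}(p)-|p-q|\qquad\text{for all } p,q\in\overline{\PosS}\cap\Omega.
\]
Exchanging $p$ and $q$ gives $|\rho_{u,\Omega}(p)-\rho_{u,\Omega}(q)|\le|p-q|$, which in particular implies continuity.

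\textbf{Expected obstacle.} There is no substantive analytic difficulty: the lemma is really a definition-unwinding exercise, designed to be true because the condition ``$|D^2u|<r^{-1}$ on $B_r(p)\cap\PosS$'' is monotone both under shrinking the ball (since $s\le r$ implies $s^{-1}\ge r^{-1}$) and under recentering to a nearby point. The only cosmetic point worth mentioning is the possibility $\rho_{u,\Omega}(p)=+\infty$, which can occur e.g.\ for the flat cone on $\Omega=\R^d$; this is handled by applying Step 1 with arbitrarily large $r$ and interpreting the Lipschitz bound in the extended reals $[0,+\infty]$.
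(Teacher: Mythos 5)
Your proof is correct; the paper gives no argument at all, presenting the lemma as a direct consequence of Definition~\ref{DefRegularityScales}, and your ball-nesting observation is precisely that unwinding (the key monotonicity being that shrinking the radius $r$ only strengthens the condition $|D^2u|<r^{-1}$). One pedantic note: Step~2 as written invokes item~(3) with $\rho_1=\rho$, which is outside the stated range $\rho_1>0$ when $\rho=0$; that case is covered either by appealing to the nesting observation directly or by observing that the $1$-Lipschitz bound from Step~3 already yields $\RS_{u,\Omega}(\rho)=\rho_{u,\Omega}^{-1}\bigl((\rho,+\infty]\bigr)$ relatively open for every $\rho\ge0$.
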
 
For the last item, we used the notation $B_\delta(E)$ for the $\delta$-neighborhood of a set $E$:
\begin{equation}
\label{EqnBDeltaE}
B_\delta(E):=\{p\in\R^d:\hem \mathrm{dist}(p,E)<\delta\}.
\end{equation} 

The collection $\{\RS(\rho)\}_{\rho>0}$ gives an open cover of $\PosS\cup\Reg(u)$:
\begin{prop}
\label{PropOpenCover}
For $u\in\M(\Omega)$, we have
$$
\PosS\cup\Reg(u)\cap\Omega\subset\bigcup_{\rho>0}\RS(\rho).
$$
\end{prop}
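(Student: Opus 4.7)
The plan is to show that for any $p \in (\{u>0\} \cup \Reg(u)) \cap \Omega$, the regularity scale $\rho_{u,\Omega}(p)$ is strictly positive, which immediately implies $p \in \RS(\rho)$ for some sufficiently small $\rho>0$. By the definition of $\rho_{u,\Omega}$ (Definition~\ref{DefRegularityScales}), it suffices to produce \emph{some} radius $r>0$ for which $B_r(p) \subset \Omega$ and $|D^2 u| < r^{-1}$ on $B_r(p) \cap \{u>0\}$. Because the condition on $\Omega$ is trivially met for small $r$ (as $p\in\Omega$ is an interior point), the heart of the matter is producing a neighborhood of $p$ on which $|D^2 u|$ is bounded, then shrinking the radius until the bound is smaller than $1/r$.

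I would split into the two cases suggested by the set on the left-hand side. First, suppose $p \in \{u>0\} \cap \Omega$. Since $\{u>0\}$ is open and $u$ is harmonic there (Proposition~\ref{PropEL}), one can choose $r_0>0$ with $\overline{B_{2r_0}(p)} \subset \{u>0\} \cap \Omega$ and apply standard interior estimates for harmonic functions to get $\|D^2 u\|_{L^\infty(B_{r_0}(p))} \le M$ for some finite $M$. Then any $r < \min(r_0, 1/M)$ gives $|D^2 u| < 1/r$ on $B_r(p) \cap \{u>0\} = B_r(p)$, so $\rho_{u,\Omega}(p) \ge r > 0$.

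Second, suppose $p \in \Reg(u) \cap \Omega$. By Definition~\ref{DefRegSing} together with Lemma~\ref{LemIOF} applied to the rescaled minimizer centered at $p$, there exists $r_0 > 0$ with $B_{r_0}(p) \subset \Omega$ such that $\{u>0\} \cap B_{r_0/2}(p)$ is a $C^{2,\alpha}$-epigraph and $u \in C^{2,\alpha}(\overline{\{u>0\}} \cap B_{r_0/2}(p))$. In particular, $|D^2 u|$ is bounded by some finite $M$ on $B_{r_0/2}(p) \cap \{u>0\}$, and choosing $r < \min(r_0/2, 1/M)$ again gives $\rho_{u,\Omega}(p) \ge r > 0$.

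Combining the two cases, every $p$ in the left-hand set satisfies $\rho_{u,\Omega}(p) > 0$, hence belongs to $\RS(\rho)$ for $\rho = \rho_{u,\Omega}(p)/2 > 0$. This gives the claimed inclusion. There is no real obstacle here: the proposition is essentially a restatement of the interior $C^2$ regularity of $u$ in the positive set together with the $C^{2,\alpha}$ regularity of $u$ up to the regular part of the free boundary, both of which are classical in the Alt-Caffarelli theory and already recorded above. The only mild care needed is the trade-off between the size of the ball and the magnitude of the Hessian in Definition~\ref{DefRegularityScales}, but since both quantities can be made simultaneously small, this causes no difficulty.
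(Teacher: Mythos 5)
Your argument is correct and follows essentially the same two-case decomposition and the same key ingredients as the paper's own proof: interior estimates for harmonic functions when $p\in\PosS\cap\Omega$, and Lemma~\ref{LemIOF} (yielding a smooth epigraph and hence a Hessian bound up to the free boundary) when $p\in\Reg(u)\cap\Omega$. The extra bookkeeping you carry out to reconcile the ball radius with the Hessian bound, choosing $r<\min(r_0,1/M)$, is exactly the minor point the paper elides, and it is handled correctly.
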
 
\begin{proof}
For a point $p\in\PosS\cap\Omega$, we find $r>0$ such that $B_r(p)\subset\PosS\cap\Omega$. With $u$ being a positive harmonic function in $B_r(p)$, we get a bound on its Hessian in a neighborhood of $p$. This gives  $p\in\RS(\rho)$ for some $\rho>0$. 

For a point $p\in\Reg(u)\cap\Omega$, by Lemma~\ref{LemIOF}, we find $r>0$ such that $B_r(p)\subset\Omega$ and that $\PosS\cap B_r(p)$ is the epigraph of a smooth function. In this domain, we apply estimates for harmonic functions to get a bound on $|D^2u|$. This implies $p\in\RS(\rho)$ for some $\rho>0.$
\end{proof} 

The inclusion in Proposition~\ref{PropOpenCover} is an equality. This is a consequence of Lemma~\ref{LemIOF} and the following:
\begin{lem}
\label{LemFollowingOpenCover}
Suppose that $u\in\M_0(B_1)$ with $0\in\RS(\rho)$ for some $\rho>0$.

There is a dimensional constant $\mu_d>0$ such that for given $\eps>0$, we have, up to a rotation, 
$$
|u-(x_d)_+|<\eps  r \text{ in }B_r
$$
for all $r<\mu_d\eps\rho$.
\end{lem}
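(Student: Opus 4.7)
The plan is to rescale and combine a soft compactness step (qualitative flatness) with a sharp Taylor expansion based on the pointwise Hessian bound. Set $s:=r/\rho$ and $w(x):=u(rx)/r$, so that $w \in \M_0(B_{1/r})$ and $|D^2 w| < s$ on $\{w>0\}\cap B_{1/s}$; the claim reduces to finding a dimensional $\mu_d>0$ such that
$$s < \mu_d \eps \;\Longrightarrow\; |w - (x_d)_+| < \eps \quad \text{on } B_1, \text{ up to a rotation.}$$

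\textbf{Step 1 (qualitative flatness).} I first argue that for every $\eta > 0$ there is a dimensional $s_0(\eta)\in(0,1)$ with $s < s_0(\eta)$ forcing $\|w - (x_d)_+\|_{L^\infty(B_{3/2})}<\eta$ (for some rotation). Argue by blow-up: a failing sequence $w_n$ with $s_n \to 0$ subconverges (Lemma~\ref{LemCompactness}) to some $V\in\M(\R^d)$; interior $C^k$ estimates for harmonic functions propagate $|D^2 w_n|\le s_n \to 0$ to $D^2 V\equiv 0$ on $\{V>0\}$. Hence $V$ is affine on each connected component of $\{V>0\}$; Theorem~\ref{ThmGlobalConnected} makes $\{V>0\}$ connected, so $V = \langle \xi, x\rangle_+$; Proposition~\ref{PropLipNonD} and Lemma~\ref{LemGlobalQuant} force $|\xi|=1$. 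So $V$ is a rotation of $(x_d)_+$, contradicting failure.

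\textbf{Step 2 (Taylor expansion at $0$).} Fix $\eta_0 = 1/100$ and assume $s < s_0(\eta_0)$. By Lemma~\ref{LemIOF}, $\Gamma(w) \cap B_1$ is a $C^{2,\alpha}$-graph and $0 \in \Reg(w)$; let $\nu := \lim_{\{w>0\}\ni y\to 0}\nabla w(y)$ be the inner unit normal at $0$. Thanks to $\eta_0$-flatness, every $x \in \{w>0\}\cap B_1$ is joined to $0$ by a unit-speed path $\gamma:[0,L]\to\overline{\{w>0\}}$ with $L \le 2$ and $\gamma((0,L])\subset\{w>0\}$. With $\phi := w - \langle \nu,\cdot\rangle$, the gradient $\nabla w - \nu$ is $s$-Lipschitz along curves in $\{w>0\}$ (by the Hessian bound) and vanishes in the limit at $0$; hence $|\nabla \phi(\gamma(t))| \le st$, and
$$|\phi(x)| = |\phi(x)-\phi(0)| \le \int_0^L |\nabla\phi(\gamma(t))|\,dt \le \tfrac12 s L^2 \le 2s \quad\text{on } \overline{\{w>0\}}\cap B_1.$$
On $\Gamma(w)\cap B_1$ this reads $|\langle \nu, x\rangle|\le 2s$; combined with the qualitative flatness (which prevents the positive set from re-crossing to the opposite half-slab), $\{w=0\}\cap B_1 \subset \{\langle \nu,x\rangle \le 2s\}$, so there $\langle \nu,x\rangle_+ \le 2s$ and $|w - \langle \nu,x\rangle_+| = \langle \nu,x\rangle_+ \le 2s$. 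Both estimates together give $|w - \langle \nu, x\rangle_+| \le 2s$ on $B_1$; a final rotation sending $\nu$ to $e_d$ yields $|w - (x_d)_+| \le 2s$ on $B_1$, and taking $\mu_d := \tfrac12 \min(1,s_0(1/100))$ completes the proof.

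\textbf{Main obstacle.} The most delicate point is the side-of-slab claim: that $\{w=0\}\cap B_1$ lies entirely on the $-\nu$ side of the thin slab of width $4s$. This follows from the qualitative $\eta_0$-flatness of Step 1 (which keeps the two sides separated at scale $\eta_0 \gg s$) combined with continuity/connectedness of $\overline{\{w>0\}}$, but writing it cleanly requires carefully tracking the interaction between the $O(\eta_0)$ flatness scale and the much finer $O(s)$ slab scale.
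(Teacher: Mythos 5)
Your proof is correct, but it takes a genuinely different route from the paper's. The paper's argument is a two-line direct computation: by Proposition~\ref{PropEL}, $|\nabla u|=1$ on $\Gamma(u)$, and the Hessian bound makes $\nabla u$ continuous up to $\Gamma(u)$, so after a rotation $\nabla u(0)=e_d$; then the Taylor expansion with $|D^2u|<1/\rho$ gives $|u-x_d|<C_dr^2/\rho$ in $B_r\cap\{u>0\}$, and one takes $\mu_d<1/C_d$. Your proof inserts a preliminary blow-up/compactness step (your Step~1) to first obtain \emph{qualitative} $\eta_0$-flatness at the scale $s=r/\rho$, invokes Lemma~\ref{LemIOF} to get an epigraph structure, and only then performs the sharp Taylor estimate. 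The core mechanism --- Hessian bound plus Taylor --- is shared, but your compactness step is absent from the paper. What it buys you is precisely the resolution of the issue you correctly flag as the ``main obstacle'': once $\{w>0\}\cap B_1$ is a $C^{2,\alpha}$-epigraph, both the path-connectedness needed to integrate the Hessian bound along curves of length $O(1)$, and the fact that the contact set sits on one side of the $O(s)$-slab $\{|\langle\nu,x\rangle|\lesssim s\}$, become immediate. The paper leaves these two points implicit (they can also be handled directly by a segment/first-exit argument without any compactness). The cost of your route is that $\mu_d$ is obtained through a non-effective constant $s_0(\eta_0)$, whereas the paper's constant is explicit; since the statement only asserts existence of a dimensional $\mu_d$, this does not affect the conclusion.
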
 
Recall the set of minimizers $\M_p(\cdot)$ from \eqref{EqnMP}.

\begin{proof}
Up to a rotation,  Proposition~\ref{PropEL} gives
$
\nabla u(0)=e_d.
$
With $0\in\RS(\rho)$,we have
$$
|u-x_d|<C_dr^2/\rho \hem\text{ in }B_r\cap\PosS
$$
for $r<\rho$ and a dimensional constant $C_d$. The conclusion follows by choosing $\mu_d<1/C_d$.
\end{proof} 

It follows from Proposition~\ref{PropOpenCover} that $\RS(\rho)$ is nonempty for small $\rho$. This can be quantified:
\begin{lem}
\label{LemAnchoringPoint}
Suppose that $u\in\M_0(B_2)$, then, up to a rotation, we have
$$
B_{r_d}(e_1)\subset\PosS\cap\RS(\rho_d)
$$
for dimensional constants $r_d$ and $\rho_d$.
\end{lem}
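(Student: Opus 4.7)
The strategy is to combine the nondegeneracy and Lipschitz estimates of Proposition~\ref{PropLipNonD} with interior regularity for positive harmonic functions to carve out a ball of dimensional size inside $\{u>0\}$ on which the Hessian of $u$ is controlled. From Definition~\ref{DefRegularityScales}, this automatically gives a lower bound on the regularity scale at every interior point of that ball.

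First, rescaling Proposition~\ref{PropLipNonD} to $u \in \M_0(B_2)$ yields the nondegeneracy $\sup_{B_1} u \ge c_d$ together with the Lipschitz bound $|\nabla u| \le C_d$ in $B_{3/2}$, both with dimensional constants. Pick $y_0 \in \overline{B_1}$ with $u(y_0) \ge c_d$. Since $u$ is $C_d$-Lipschitz, $u > 0$ throughout $B_{r_1}(y_0)$ with $r_1 := c_d/(2C_d)$; shrinking $r_1$ if necessary, we may also assume $B_{r_1}(y_0) \subset B_{3/2}$.

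Next, on $B_{r_1}(y_0)$ the function $u$ is positive, harmonic, and bounded above by the dimensional constant $2 C_d$ (using $u(0)=0$ and the Lipschitz bound). Standard interior estimates for harmonic functions then give $|D^2 u| \le M_d$ on the concentric half-ball $B_{r_1/2}(y_0)$, with $M_d$ dimensional. For any $p \in B_{r_1/4}(y_0)$ and any $\rho \le \rho_d := \min(r_1/4,\, 1/(2M_d))$, the ball $B_\rho(p)$ lies inside $B_{r_1/2}(y_0) \subset \{u>0\}$ and $|D^2 u| \le M_d < 1/\rho$ holds there, so Definition~\ref{DefRegularityScales} gives $p \in \RS(\rho_d)$. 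Setting $r_d := r_1/4$, this yields $B_{r_d}(y_0) \subset \{u>0\} \cap \RS(\rho_d)$. A rotation then sends $y_0$ onto the positive $x_1$-axis, proving the claim.

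\textbf{Main subtlety.} The only delicate point is matching the center of the constructed ball to $e_1$: the nondegeneracy produces $y_0$ at some distance $|y_0| \in (0,1]$ from the origin, and rotations preserve norm, so rotating $y_0$ only places it at $|y_0|e_1$, not necessarily at $e_1$ itself. If a center of norm exactly one is required, one can Harnack-chain outward through a dimensionally bounded number of overlapping definite-size balls inside $\{u>0\}$, producing a new center $\tilde{y}_0$ with $|\tilde{y}_0| \in [1/2, 1]$ and $u(\tilde{y}_0)$ still bounded below by a dimensional constant; the argument above then applies verbatim around $\tilde{y}_0$, and the statement is read with the dimensional radius $r_d$ absorbing the discrepancy between $|\tilde{y}_0|e_1$ and $e_1$.
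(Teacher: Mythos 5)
Your strategy is the same as the paper's (nondegeneracy plus Lipschitz bound to carve out a ball in $\{u>0\}$, then interior Hessian estimates for positive harmonic functions to bound the regularity scale from below), and that part is fine. The gap is exactly the ``main subtlety'' you flag, and neither of your two proposed fixes actually closes it.

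The first fix (``absorb the discrepancy into $r_d$'') fails: the Lipschitz bound only gives $|y_0|\ge c_d/C_d$, which can be a small dimensional constant, so a ball centered at $|y_0|e_1$ that also contains $e_1$ would need radius of order $1-|y_0|$, and such a large ball is not guaranteed to stay in $\{u>0\}$, let alone carry a uniform Hessian bound. The second fix (Harnack-chaining outward to radius in $[1/2,1]$) is unsubstantiated — a priori nothing guarantees a chain of definite-size balls inside $\{u>0\}$ from $y_0$ to $\partial B_1$, since the free boundary could obstruct — and even if it worked it still would not place the center on $\partial B_1$. The paper's actual proof resolves this more directly: an Alt--Caffarelli minimizer is \emph{subharmonic} on $B_1$ (it is harmonic on $\{u>0\}$ and its distributional Laplacian is a nonnegative measure supported on $\Gamma(u)$), so $\sup_{\overline{B_1}}u=\sup_{\partial B_1}u$. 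The nondegeneracy from Proposition~\ref{PropLipNonD} then locates a point $y_0\in\partial B_1$ with $u(y_0)\ge c_d$, which after a rotation is exactly $e_1$. From there your argument applies verbatim: Lipschitz continuity gives $B_{2r_d}(e_1)\subset\{u>0\}$ with $0<u<C_d$ there, and interior estimates for harmonic functions bound $|D^2u|$ on $B_{r_d}(e_1)$, yielding $B_{r_d}(e_1)\subset\RS(\rho_d)$.
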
 

\begin{proof}
Proposition~\ref{PropLipNonD} gives, up to a rotation, that 
$
u(e_1)>c_d>0.
$
 The same proposition gives dimensional constants $r_d<1/4$ and $C_d$ such that 
$$
0<u<C_d\hem \text{ in }B_{2r_d}(e_1).
$$ 
The conclusion follows from the Hessian bound for harmonic functions. 
\end{proof} 

The following provides the usefulness of regularity scales. For points with a lower bound on their regularity scales, their limit points satisfy the same bound. 
\begin{thm}
\label{ThmContinuityOfRS}
For each $n\in\N$, suppose that $u_n\in\M(B_1)$ and $p_n\in\overline{\{u_n>0\}}\cap B_1$. 

If we have $$
u_n\to u \hem\text{ in }L^\infty_{loc}(B_1)\hem\text{ and }\hem p_n\to p\in B_1\quad\text{as}\quad n \to \infty,
$$
then 
$$
\lim_{n\to \infty}\rho_{u_n}(p_n)=\rho_{u}(p).
$$
\end{thm}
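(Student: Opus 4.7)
The plan is to prove $\limsup_n \rho_{u_n}(p_n) \le \rho_u(p) \le \liminf_n \rho_{u_n}(p_n)$ separately, both by contradiction. The easy ingredient is harmonicity: $u_n$ and $u$ are harmonic on their respective positive sets, and Lemma~\ref{LemCompactness} combined with standard interior elliptic estimates yields $u_n \to u$ in $C^\infty_{\mathrm{loc}}(\PosS)$. The difficult ingredient, needed only for the lower bound, is $C^2$ convergence of $u_n$ to $u$ up to $\GU$ near regular free boundary points.

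For the upper bound, I would assume $L := \limsup_n \rho_{u_n}(p_n) > \rho_u(p)$ along some subsequence on which $\rho_{u_n}(p_n) \to L$. Fix $r \in (\rho_u(p), L)$ and set $r^* := (L+r)/2 > r$. For $n$ large one has $\rho_{u_n}(p_n) > r^*$, and Definition~\ref{DefRegularityScales} supplies $|D^2 u_n| < 1/r^*$ on $B_{r^*}(p_n) \cap \{u_n > 0\}$. Using the $C^\infty_{\mathrm{loc}}(\PosS)$ convergence and $p_n \to p$, one passes pointwise limits to obtain $|D^2 u| \le 1/r^*$ on $B_{r^*}(p) \cap \PosS$. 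For any $r' \in (r, r^*)$ this gives $|D^2 u| \le 1/r^* < 1/r'$ strictly on $B_{r'}(p) \cap \PosS$, so $\rho_u(p) \ge r' > r > \rho_u(p)$, a contradiction.

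For the lower bound, I would assume $\liminf_n \rho_{u_n}(p_n) < \rho_u(p)$ and fix $r$ and $r_1$ with $\liminf < r < r_1 < \rho_u(p)$; the choice of $r_1$ ensures $|D^2 u| < 1/r_1$ on $B_{r_1}(p) \cap \PosS$. Along a subsequence with $\rho_{u_n}(p_n) < r$, Definition~\ref{DefRegularityScales} produces $q_n \in B_r(p_n) \cap \{u_n > 0\}$ with $|D^2 u_n(q_n)| \ge 1/r$; passing to a further subsequence, $q_n \to q \in \overline{B_r(p)} \subset B_{r_1}(p)$. If $u(q) > 0$, then the $C^\infty_{\mathrm{loc}}(\PosS)$ convergence gives $|D^2 u(q)| \ge 1/r$, contradicting $|D^2 u(q)| < 1/r_1 < 1/r$.

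The main obstacle is the remaining case $u(q) = 0$, where $q \in \GU \cap B_{r_1}(p)$ and no a priori Hessian bound on $u$ is available at $q$. I would first read off from Definition~\ref{DefRegularityScales} the direct monotonicity $\rho_u(q) \ge \rho_u(p) - |p - q| > 0$. Lemma~\ref{LemFollowingOpenCover} applied at $q$ then provides arbitrarily good flatness of $u$ on some ball $B_\sigma(q)$. Since $u_n \to u$ in $L^\infty_{\mathrm{loc}}$ and the free boundaries converge in Hausdorff distance (Lemma~\ref{LemCompactness}), this flatness transfers to $u_n$ at $q$ for all $n$ large, so Lemma~\ref{LemIOF} applies uniformly and yields a common ball on which each $\{u_n > 0\}$ is a $C^{2,\alpha}$-epigraph with uniform norm. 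Classical boundary Schauder estimates on the harmonic functions $u_n$ with $|\nabla u_n|=1$ on smooth $\Gamma(u_n)$ (Proposition~\ref{PropEL}) then give uniform $C^{2,\alpha}$ bounds on $u_n$ up to the free boundary in a fixed ball around $q$; Arzel\`a--Ascoli produces $C^2$ convergence up to $\GU$, and both $|D^2 u_n|$ and $|D^2 u|$ extend continuously to the respective free boundaries. Passing $|D^2 u_n(q_n)| \ge 1/r$ to the limit then contradicts $|D^2 u|(q) \le 1/r_1 < 1/r$.
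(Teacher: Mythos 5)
Your proposal follows the same two-lemma decomposition as the paper (the paper proves the $\limsup$ direction as Lemma~\ref{LemLimSup} and the $\liminf$ direction as Lemma~\ref{LemLimInf}), with the same case split in the hard direction according to whether the limit point $q$ lies in $\POSS$ or on $\Gamma(U)$, and the same key lemmas (Lemma~\ref{LemFollowingOpenCover} for propagation of the regularity scale, Lemma~\ref{LemIOF} for the $C^{2,\alpha}$-epigraph structure, and interior estimates for the harmonic difference $u_n-u$). The one place the paper does something you should pay attention to is the final limiting step in the boundary case. You pass directly from the uniform $C^{2,\alpha}$ bounds on $u_n$ to a statement that $D^2u_n(q_n)\to D^2u(q)$ with $q_n\in\overline{\{u_n>0\}}$, $q\in\Gamma(u)$, and you invoke continuity of $D^2u$ at $q$ to say $|D^2u(q)|\le 1/r_1$; this is a moving-domain convergence plus a boundary-extension statement that, while true, needs a flattening argument or a few extra sentences. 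The paper sidesteps both issues with a triangle inequality through the interior point $\bar q := q+\delta e_d$: by nondegeneracy $\bar q$ lies well inside both $\PosS$ and $\{u_n>0\}$, the bound $|D^2u(\bar q)|<1/\rho$ follows directly from the definition of $\rho_u(p)$ (no boundary continuity needed), $|D^2u_n(\bar q)-D^2u(\bar q)|$ is controlled by interior harmonic estimates, and $|D^2u_n(q_n)-D^2u_n(\bar q)|$ is controlled by the uniform modulus of continuity of $D^2u_n$ from the boundary Schauder estimate. Also, for the record, the Schauder estimate is driven by the Dirichlet condition $u_n=0$ on $\Gamma(u_n)$ together with the $C^{2,\alpha}$ domain from Lemma~\ref{LemIOF}, not by $|\nabla u_n|=1$, which is a constraint rather than a boundary condition; this is a wording issue and not a gap. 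Overall the argument is correct and matches the paper's strategy, with the paper's use of the interior anchor point $\bar q$ being the cleaner way to conclude.
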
 

With Lemma~\ref{LemCompactness}, we see that $u\in\M(B_1)$ and $p\in\overline{\PosS}$.

Theorem~\ref{ThmContinuityOfRS} follows from Lemma~\ref{LemLimSup} and Lemma~\ref{LemLimInf} below, corresponding to the $\liminf$ and $\limsup$ inequalities, respectively. 

\begin{lem}
\label{LemLimSup}
Under the assumptions   in Theorem~\ref{ThmContinuityOfRS}, suppose that, for some $\rho>0$,
$$
p_n\in\RS_{u_n}(\rho)  \hem\text{ for all }n,
$$
then 
$$
p\in\RS_u(\rho') \hem\text{ for all }\hem 0<\rho'<\rho.
$$
\end{lem}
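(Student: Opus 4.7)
The plan is to unpack the definition of the regularity scale and verify directly, for an arbitrary $\rho' \in (0, \rho)$, that the intermediate radius $r := (\rho + \rho')/2$ certifies $p \in \RS_u(\rho')$. Since $\rho' < r < \rho$, it suffices to check that $B_r(p) \subset B_1$ and that $|D^2 u| < r^{-1}$ on $B_r(p) \cap \{u > 0\}$; both conditions will be extracted from the corresponding properties of $u_n$ at $p_n$ by a standard compactness argument.

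First I would set up the scales. For each $n$ the hypothesis $p_n \in \RS_{u_n}(\rho)$ yields some $r_n > \rho$ with $B_{r_n}(p_n) \subset B_1$ and $|D^2 u_n| < r_n^{-1}$ on $B_{r_n}(p_n) \cap \{u_n > 0\}$. The ball containment for the limit is then immediate: $B_\rho(p_n) \subset B_1$ combined with $p_n \to p$ gives $\mathrm{dist}(p, \partial B_1) \geq \rho > r$, hence $B_r(p) \subset B_1$.

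The main step is the pointwise Hessian bound. I would fix an arbitrary $q \in B_r(p) \cap \{u > 0\}$ and show $|D^2 u(q)| \le \rho^{-1} < r^{-1}$. By the uniform convergence $u_n \to u$ and the strict positivity $u(q) > 0$, there is a small ball $B_\delta(q) \Subset B_1$ on which $u$ and all $u_n$ (for $n$ large) are strictly positive. By Proposition~\ref{PropEL}, the functions $u$ and $u_n$ are harmonic on $B_\delta(q)$, and they are uniformly bounded in $L^\infty$. Standard interior estimates for harmonic functions then yield $u_n \to u$ in $C^\infty_{loc}(B_\delta(q))$, and in particular $D^2 u_n(q) \to D^2 u(q)$. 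For $n$ large enough that $|p_n - p| + r < r_n$, we have $q \in B_{r_n}(p_n) \cap \{u_n > 0\}$, so $|D^2 u_n(q)| < r_n^{-1} < \rho^{-1}$, and passing to the limit delivers the claimed bound.

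The only subtlety worth flagging is that we have no quantitative control on how $r_n$ approaches $\rho$ from above, so the limit inequality $|D^2 u(q)| \leq \rho^{-1}$ is a priori non-strict. Choosing $r$ strictly between $\rho'$ and $\rho$ is precisely what absorbs this loss, since $\rho^{-1} < r^{-1}$. Beyond this scale-bookkeeping, the proof requires no new ingredient: interior regularity of harmonic functions, together with the definition of $\RS$, does all the work.
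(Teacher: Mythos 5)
Your proof is correct and follows essentially the same route as the paper's: both localize to a small ball around a fixed $q$ in the positive set, use harmonicity of $u$ and $u_n$ there together with $L^\infty_{loc}$ convergence to pass the Hessian bound to the limit, and handle the strictness via the gap $\rho'<\rho$. The only cosmetic difference is that you introduce the intermediate radius $r=(\rho+\rho')/2$ and invoke $C^\infty_{loc}$ convergence of harmonic functions directly, whereas the paper writes out the interior Hessian estimate for $u_n-u$ explicitly; these are equivalent.
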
 
\begin{proof}
For $\rho'<\rho$, with $p_n\to p$ and  $B_{\rho}(p_n)\subset B_1$ for all $n$, we see that
$
B_{\rho'}(p)\subset B_1.
$
It remains to bound $|D^2u(q)|$ by $1/\rho'$ for $q\in\PosS\cap B_{\rho'}(p).$

With $q\in\PosS\cap B_1$, we find $0<\delta<(\rho-\rho')/4$ such that 
$$
u>\delta \hem\text{ in }B_{2\delta}(q)\subset B_1.
$$
Uniform convergence of $u_n$ to $u$ gives $u_n>0$ in $B_{2\delta}(q)$ for  large $n$. It follows from Proposition~\ref{PropEL} that $(u_n-u)$ is a harmonic function in $B_{2\delta}(q)$.
Hessian bound on harmonic functions gives
$$
|D^2u(q)-D^2u_n(q)|\le C\delta^{-2}\|u_n-u\|_{L^\infty(B_{2\delta}(q))}<\frac{1}{\rho'}-\frac{1}{\rho}
$$
for large $n$.

On the other hand, with $q\in B_{\rho'}(p)$ and $p_n\to p$, we have
$
|q-p_n|<\rho
$
for large $n$. With $p_n\in\RS_{u_n}(\rho)$, this gives $|D^2u(q)|<1/\rho$. Combining this with the previous inequality, we get
$
|D^2u(q)|<1/\rho'.
$
\end{proof} 

\begin{lem}
\label{LemLimInf}
Under the assumptions in Theorem~\ref{ThmContinuityOfRS}, suppose that
$$
p\in\RS_{u}(\rho) \hem\text{ for some }\rho>0.
$$
Then, for $0<\rho'<\rho$, we have
$$
p_n\in\RS_{u_n}(\rho') \hem\text{ for  large }n.
$$
\end{lem}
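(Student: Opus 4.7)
The plan is to argue by contradiction. If the conclusion fails, then along a subsequence (not relabeled) we can find $q_n \in B_{\rho'}(p_n) \cap \{u_n > 0\}$ with $|D^2 u_n(q_n)| \ge 1/\rho'$; note that $B_{\rho'}(p_n) \subset B_1$ for large $n$ since $B_\rho(p) \subset B_1$ and $\rho' < \rho$. After a further subsequence $q_n \to q_\infty \in \overline{B_{\rho'}(p)} \subset B_\rho(p)$, and Lemma~\ref{LemCompactness} forces $q_\infty \in \overline{\{u > 0\}}$. The analysis then splits into the interior case $q_\infty \in \{u > 0\}$ and the free-boundary case $q_\infty \in \Gamma(u)$.

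In the interior case, the argument mirrors Lemma~\ref{LemLimSup}: $u$ is positive harmonic on some $B_{2\delta}(q_\infty)$, uniform convergence forces $u_n > 0$ on the same ball for large $n$, $u_n - u$ is harmonic there, and standard interior Hessian estimates give $D^2 u_n(q_n) \to D^2 u(q_\infty)$. Since $q_\infty \in B_\rho(p) \cap \{u > 0\}$ and $p \in \RS_u(\rho)$, we have $|D^2 u(q_\infty)| < 1/\rho < 1/\rho'$, contradicting $|D^2 u_n(q_n)| \ge 1/\rho'$.

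The subtler case is $q_\infty \in \Gamma(u) \cap B_\rho(p)$. The crucial sub-step is to exclude $q_\infty \in \Sing(u)$. If $q_\infty$ were singular, Proposition~\ref{PropBlowUp} would yield along some $r_k \to 0$ a singular minimizing cone $U = \lim u_{q_\infty, r_k}$. The Hessian bound rescales as $|D^2 u_{q_\infty, r}(x)| = r \, |D^2 u(q_\infty + rx)| \le r/\rho$ at every $x$ with $u_{q_\infty,r}(x) > 0$ and $q_\infty + rx \in B_\rho(p)$. For any fixed compact $K \subset \{U > 0\}$, Hausdorff convergence of positive sets (Lemma~\ref{LemCompactness}) places $K$ inside $\{u_{q_\infty, r_k} > 0\}$ for large $k$, so the blow-ups are uniformly $C^{1,1}$ on $K$ with Hessian bound tending to zero. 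Passing to the limit yields $D^2 U \equiv 0$ on $\{U > 0\}$, so $U$ is affine on each component. Combined with $U \ge 0$, $1$-homogeneity, connectedness of $\{U > 0\}$ for global minimizers (Theorem~\ref{ThmGlobalConnected}), and $|\nabla U| = 1$ on $\Gamma(U)$ from Proposition~\ref{PropEL}, this forces $U = (a \cdot x)_+$ with $|a| = 1$, a rotation of the flat cone, contradicting $q_\infty \in \Sing(u)$.

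Hence $q_\infty \in \Reg(u)$, and Lemma~\ref{LemIOF} supplies a ball $B_{r_0}(q_\infty)$ in which $u$ is as flat as desired. Uniform convergence transfers this flatness to $u_n$ for large $n$, so a second application of Lemma~\ref{LemIOF} yields uniform $C^{2,\alpha}$ bounds for $u_n$ up to $\Gamma(u_n)$ on a slightly smaller ball. Arzel\`a-Ascoli together with uniqueness of the limit produces $u_n \to u$ in $C^2$ on $\overline{\{u > 0\}} \cap B_{r_0/4}(q_\infty)$, so $D^2 u_n(q_n) \to D^2 u(q_\infty)$. Passing the bound $|D^2 u| < 1/\rho$ to $q_\infty$ by continuity of the Hessian up to $\Reg(u)$ gives $|D^2 u(q_\infty)| \le 1/\rho < 1/\rho'$, the desired contradiction. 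I expect the hardest step to be the blow-up rigidity argument ruling out $q_\infty \in \Sing(u)$, since it is precisely there that an interior Hessian bound must be converted into information about the free boundary in the blow-up, using in a coordinated way the rescaling of the Hessian bound, the connectedness of the positive set of a global minimizer, and the Euler-Lagrange condition.
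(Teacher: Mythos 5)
Your argument is correct, but in the free-boundary case it takes a genuinely different and somewhat longer route than the paper. The contradiction setup and the interior case (deferring to the argument of Lemma~\ref{LemLimSup}) are identical. For $q_\infty\in\Gamma(u)$, the paper simply observes that $B_{\rho-\rho'}(q_\infty)\subset B_\rho(p)$ so the Hessian bound transfers, giving $q_\infty\in\RS_u(\rho-\rho')$, and then invokes Lemma~\ref{LemFollowingOpenCover} (which encodes precisely the rigidity you re-derive: positive regularity scale plus the Euler--Lagrange gradient condition at a free boundary point forces $\eps_d$-flatness at small scale). You instead re-prove this via a blow-up compactness argument: rescaling the Hessian bound to kill $D^2U$ on the blow-up $U$, then using $1$-homogeneity, connectedness of $\{U>0\}$ (Theorem~\ref{ThmGlobalConnected}), and $|\nabla U|=1$ on $\Gamma(U)$ to conclude $U$ is the flat cone. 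This is correct, but more roundabout than the paper's direct use of the regularity scale machinery they built precisely so that Lemma~\ref{LemFollowingOpenCover} handles this case in one line. For the final contradiction, the paper uses a quantitative three-term estimate through an auxiliary interior point $\bar q=q_\infty+\delta e_d$ (interior harmonic estimates for $u_n-u$ near $\bar q$, the uniform $C^{2,\alpha}$ modulus to pass from $\bar q$ to $q_n$, and $|D^2u(\bar q)|<1/\rho$ from $\bar q\in B_\rho(p)$), whereas you appeal to $C^2$ convergence on $\overline{\{u>0\}}\cap B_{r_0/4}(q_\infty)$ and continuity of $D^2u$ up to $\Reg(u)$. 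Your version is valid but glosses over the fact that $q_n$ and $q_\infty$ live on different free boundaries $\Gamma(u_n)$ versus $\Gamma(u)$, so the statement ``$D^2u_n(q_n)\to D^2u(q_\infty)$'' deserves a sentence justifying how the varying-domain Hessians are compared; the paper's auxiliary-point device sidesteps this neatly. In sum: correct, but the paper's route through Lemma~\ref{LemFollowingOpenCover} and the auxiliary interior point is both shorter and more quantitative, while yours is self-contained at the cost of re-deriving the rigidity already packaged in that lemma.
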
 

\begin{proof}
Suppose the conclusion fails, then by taking a subsequence, we find 
\begin{equation}
\label{EqnQNContra}
q_n\in B_{\rho'}(p_n)\cap\{u_n>0\}\hem\text{ and }\hem |D^2u_n(q_n)|\ge\frac{1}{\rho'} 
\end{equation}
for all $n\in\N$ large.  With $p_n\to p$, up to taking a further subsequence, we have
$$
q_n\to q\in\overline{B_{\rho'}(p)}\cap\overline{\PosS}.
$$

If $q\in\PosS$, the same argument as in the proof of Lemma~\ref{LemLimSup} gives $|D^2u_n(q_n)|<1/\rho'$ for large $n$, leading to a contradiction with \eqref{EqnQNContra}. As a result,  it suffices to consider the case where
$$
q\in\Gamma(u).
$$

\vem

With $p\in\RS_{u}(\rho)$ and $q\in\overline{B_{\rho'}(p)}$, we see that 
$$
|D^2u|<\frac{1}{\rho} \hem\text{ in }B_{\rho-\rho'}(q)\cap\PosS.
$$
Thus $q\in\RS_u(\rho-\rho')$.

Up to a rotation, we apply Lemma~\ref{LemFollowingOpenCover} to find $r>0$ such that 
$$
|u-(x_d-q_d)_+|<\frac14\eps_d r \hem\text{ in }B_r(q).
$$
Locally uniform convergence of $u_n$ to $u$ gives
$$
|u_n-(x_d-q_d)_+|<\frac12\eps_d r \hem\text{ in }B_r(q)
$$
for all large $n$. 

With Lemma~\ref{LemIOF}, we see that $\PosS$ and $\{u_n>0\}$ are $C^{2,\alpha}$-epigraphs in $B_{r/2}(q)$ with uniform $C^{2,\alpha}$-norm, depending only on $r$ and $d$. Moreover, we have
$$
\frac{\partial}{\partial x_d}u\ge1/2 \hem\text{ in }B_{r/2}(q)\cap\PosS.
$$

For $\delta>0$ small to be chosen, take
$$
\bar{q}=q+\delta e_d.
$$
Then 
$
u(\bar{q})\ge\frac12\delta.
$ 
Proposition~\ref{PropLipNonD} gives a dimensional constant $c>0$ such that 
$$
B_{c\delta}(\bar{q})\subset\PosS\cap\{u_n>0\} \hem\text{ for large }n.
$$
Estimates for the harmonic function $(u_n-u)$ in this domain give
$$
|D^2u_n(\bar{q})-D^2u(\bar{q})|\le C\delta^{-2}\|u_n-u\|_{L^\infty(B_{\delta}(\bar{q}))}
$$
for all large $n$, where $C$ is a dimensional constant. 

Estimates for the harmonic function $u_n$ in $\{u_n>0\}\cap B_{r/2}(q)$ give
$$
|D^2u_n(q_n)|\le |D^2u_n(q_n)-D^2u_n(\bar{q})|\le C_r[\delta+|q_n-q|].
$$
Combining with the previous estimate, we have
$$
|D^2u_n(q_n)|\le |D^2u(\bar{q})|+C_r[\delta+|q_n-q|]+C\delta^{-2}\|u_n-u\|_{L^\infty(B_{\delta}(\bar{q}))}.
$$

Finally, with $|\bar{q}-q|=\delta$ and $q\in \overline{B_{\rho'}(p)}$, we see that $\bar{q}\in B_{\rho}(p)$ if $\delta$ is small. With $p\in\RS_u(\rho)$, we get $|D^2u(\bar{q})|<1/\rho$. Putting this into the previous estimate, we have
$$
|D^2u_n(q_n)|\le\frac{1}{\rho}+C_r[\delta+|q_n-q|]+C\delta^{-2}\|u_n-u\|_{L^\infty(B_{\delta}(\bar{q}))}.
$$
We pick $\delta$ small such that $C_r\delta<\frac14[\frac{1}{\rho'}-\frac{1}{\rho}].$ Then 
$$
|D^2u_n(q_n)|\le\frac{1}{\rho'}+\frac34\left[\frac{1}{\rho}-\frac{1}{\rho'}\right]+C_r|q_n-q|+C\delta^{-2}\|u_n-u\|_{L^\infty(B_{\delta}(\bar{q}))}.
$$
With $q_n\to q$ and $u_n\to u$ locally uniformly, this contradicts \eqref{EqnQNContra} for large $n$. 
\end{proof}

\subsection{Harnack inequality for the Jacobi equation}
Based on Lemma~\ref{LemSchauder}, De Silva-Jerison-Shahgholian analyzed positive Jacobi fields on cones with smooth sections on the sphere. One of the key technical contribution here is to extend part of their analysis to more general cones. 

The starting point is the following lemma on the connectivity of $\RS(\rho)$ from Definition~\ref{DefRegularityScales}. Recall also the notation for minimizing cones $\C$ from \eqref{EqnCones}.
\begin{lem}
\label{LemConnectivity}
For $U\in\C$, let $r_d,\rho_d$ be constants from Lemma~\ref{LemAnchoringPoint} such that, up to a rotation, 
\begin{equation}
\label{EqnAnchoringPointAssumption}
B_{r_d}(e_1)\subset\{U>0\}\cap\RS_{U}(\rho_d).
\end{equation}
Given $\rho>0$, there is $\delta\in(0,1/4)$, depending only on $\rho$ and  $d$, such that the following holds:

For each $p\in\RS(\rho)\cap\partial B_1$, there is a continuous curve
$$
\gamma:[0,1]\to\overline{\POSS}\cap\partial B_1
$$
satisfying
$$
\gamma(0)=p,\hem \gamma(1)=e_1,\hem 
\gamma([0,1/2])\subset B_{\mu_d\eps_d\rho/8}(p),
$$
and
$$
B_\delta(\gamma([1/2,1]))\subset\POSS,
$$
where $\mu_d$ is  from Lemma~\ref{LemFollowingOpenCover} and $\eps_d$ is from Lemma~\ref{LemIOF}.
\end{lem}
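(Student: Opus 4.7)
Proof proposal:

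My plan is to construct $\gamma$ in two stages, with the uniform $\delta=\delta(\rho,d)$ obtained by a compactness argument over $\C$. The first half of $\gamma$ is a short arc that moves from $p$ into $\{U>0\}$ using the regularity‐scale hypothesis and the flatness lemma. The second half is a long path connecting this interior point to $e_1$, whose positive distance to $\Gamma(U)$ is controlled uniformly by a contradiction/compactness argument relying on the connectivity of $\{U>0\}$.

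First stage. I would use Lemma~\ref{LemFollowingOpenCover} to exploit the flatness of $U$ at (or near) $p$. If $p\in\Gamma(U)\cap\partial B_1$, translating the lemma to $p$ yields, up to a rotation, $|U(\cdot+p)-(x_d-p_d)_+|<\eps_d r$ in $B_r(p)$ for all $r<\mu_d\eps_d\rho$. Since $U$ is a cone and $p\in\Gamma(U)\cap\partial B_1$, the radial ray $\{\lambda p:\lambda>0\}$ lies in $\Gamma(U)$, so the inner normal of $\Gamma(U)$ at $p$ is perpendicular to $p$ and hence tangent to $\partial B_1$ at $p$. I then let $\gamma_1:[0,1/2]\to\partial B_1$ be the arc on $\partial B_1$ of length $r_0:=\mu_d\eps_d\rho/16$ starting at $p$ in that inner normal direction; by Lemma~\ref{LemIOF}, $\{U>0\}\cap B_{r_0}(p)$ is a $C^{2,\alpha}$-epigraph, so the endpoint $p':=\gamma_1(1/2)$ lies in $\{U>0\}$ with $\mathrm{dist}(p',\Gamma(U))\ge c_d r_0$ for a dimensional $c_d>0$. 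If instead $p\in\{U>0\}\cap\partial B_1$, either $B_{r_0}(p)\subset\{U>0\}$ and I set $p':=p$, or I apply the same construction at a closest free boundary point $q\in B_{r_0}(p)\cap\Gamma(U)$; such $q$ satisfies $q\in\RS_U(\rho/2)$ by Lemma~\ref{LemBasicPropertiesOfRS}(3). In every case $\gamma_1([0,1/2])\subset B_{\mu_d\eps_d\rho/8}(p)$ and $\mathrm{dist}(p',\Gamma(U))\ge c\rho$ for some $c=c(d)>0$.

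Second stage and compactness. I claim there exists $\delta=\delta(\rho,d)>0$ such that for every $U\in\C$ satisfying \eqref{EqnAnchoringPointAssumption} and every $p\in\RS_U(\rho)\cap\partial B_1$, the point $p'$ above can be joined to $e_1$ by a continuous path $\gamma_2:[1/2,1]\to\{U>0\}\cap\partial B_1$ with $B_\delta(\gamma_2([1/2,1]))\subset\{U>0\}$. If not, there exist $U_n\in\C$ satisfying \eqref{EqnAnchoringPointAssumption}, $p_n\in\RS_{U_n}(\rho)\cap\partial B_1$, and $\delta_n\to 0$ admitting no valid $\gamma_2$. By Lemma~\ref{LemCompactness}, a subsequence satisfies $U_n\to U_\infty\in\C$ locally with $\Gamma(U_n)\to\Gamma(U_\infty)$ in Hausdorff distance; by Theorem~\ref{ThmContinuityOfRS}, $p_n\to p_\infty\in\overline{\{U_\infty>0\}}\cap\partial B_1\cap\RS_{U_\infty}(\rho/2)$, and Proposition~\ref{PropLipNonD} transfers \eqref{EqnAnchoringPointAssumption} to $U_\infty$ in a form sufficient to rerun the first stage, producing $p'_\infty$ with $\mathrm{dist}(p'_\infty,\Gamma(U_\infty))\ge c\rho$ and $p'_n\to p'_\infty$. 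Since $U_\infty$ is a cone, $\{U_\infty>0\}$ is positively scaling invariant, and by Theorem~\ref{ThmGlobalConnected} it is connected; hence $\{U_\infty>0\}\cap\partial B_1$ is an open connected subset of $\partial B_1$, therefore path-connected. I pick a continuous path $\sigma:[1/2,1]\to\{U_\infty>0\}\cap\partial B_1$ from $p'_\infty$ to $e_1$; as $\sigma$ is compact and $\Gamma(U_\infty)$ is closed, $2\delta_\infty:=\mathrm{dist}(\sigma,\Gamma(U_\infty))>0$. Hausdorff convergence $\Gamma(U_n)\to\Gamma(U_\infty)$ gives $\mathrm{dist}(\sigma,\Gamma(U_n))\ge\delta_\infty$ for all large $n$, and prepending a short tangential arc from $p'_n$ to $p'_\infty$ (which remains at distance $\ge\delta_\infty/2$ from $\Gamma(U_n)$) produces a valid second-stage arc with $\delta=\delta_\infty/2$, contradicting $\delta_n\to 0$.

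Main obstacle. The heart of the argument is the uniform lower bound on $\delta$: an individual path from $p'$ to $e_1$ could have distance to $\Gamma(U)$ that degenerates as $U$ ranges over $\C$ or becomes more singular. Overcoming this demands genuine compactness of the family of cones satisfying \eqref{EqnAnchoringPointAssumption}; the combination of Lemma~\ref{LemCompactness} (with Hausdorff convergence of the free boundaries), Theorem~\ref{ThmContinuityOfRS} (which transfers the regularity scale to the limit), and the global connectivity from Theorem~\ref{ThmGlobalConnected} is precisely what is needed. A subtler point in the first stage is the case analysis near $p$, which crucially relies on $\Gamma(U)$ being a cone so that its normal at a free-boundary point on $\partial B_1$ is tangent to $\partial B_1$.
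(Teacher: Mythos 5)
Your proposal is correct and relies on the same essential ingredients as the paper's proof (compactness of cones via Lemma~\ref{LemCompactness} with Hausdorff convergence of the free boundaries, stability of regularity scales via Theorem~\ref{ThmContinuityOfRS}, local flatness from Lemmas~\ref{LemFollowingOpenCover} and~\ref{LemIOF}, and global connectivity of $\{U>0\}$ from Theorem~\ref{ThmGlobalConnected}), but it organizes the argument slightly differently. You construct the short first half of the curve \emph{directly} for each $U$ and $p$ using the regularity-scale bound, exploiting that the cone structure forces the inner normal at $p\in\Gamma(U)\cap\partial B_1$ to be tangent to $\partial B_1$, and then invoke compactness only to extract a uniform $\delta$ for the long second half. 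The paper instead puts the entire construction under a single contradiction argument: it supposes no $\delta$ works for any curve, passes to the limit cone, distinguishes whether the limit point lands in $\{U_\infty>0\}$ or on $\Gamma(U_\infty)$, and builds the full curve for the limit before transferring it back to $U_n$. Your variant is a bit more explicit about the geometry near $p$, but in exchange you must verify by hand that the spherical arc of length $\sim\rho$ in the inner-normal direction remains inside $\{U>0\}$ and that the exit point $p'$ has distance $\gtrsim\rho$ from $\Gamma(U)$ — facts that follow from the $C^{2,\alpha}$ graph bounds (the graph deviates quadratically while the arc advances linearly) but deserve to be written out, especially with the correct $\rho$-dependence of the curvature bound. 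Two small loose ends: in the case $p\in\{U>0\}$ with a nearby free boundary point $q$, you still need to exhibit a short arc from $p$ into the epigraph region at $q$ before running your normal-direction construction (easy but omitted); and the convergence $p'_n\to p'_\infty$ in the compactness step is not automatic from your construction — the clean fix is to pass to a further subsequence of the $p'_n\in\partial B_1$ and use Hausdorff convergence of $\Gamma(U_n)$ to show the limit point still has distance $\gtrsim\rho$ from $\Gamma(U_\infty)$, which suffices. Also, it is Theorem~\ref{ThmContinuityOfRS}, rather than Proposition~\ref{PropLipNonD}, that transfers the regularity-scale information to the limit.
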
 
Recall our notation for neighborhoods of sets $B_\delta(E)$ from \eqref{EqnBDeltaE}.

\begin{proof}
For given $\rho>0$, suppose that there is no such $\delta$, we find  a sequence $U_n\in\C$ with \eqref{EqnAnchoringPointAssumption} and a sequence $p_n\in\RS_{U_n}(\rho)\cap\partial B_1$ such that whenever $\gamma$ is a continuous map into $\overline{\{U_n>0\}}\cap\partial B_1$ with $\gamma(0)=p_n$, $\gamma(1)=e_1$, we must have 
\begin{equation}
\label{EqnHemHemContra}
\text{ either }\hem
\gamma([0,1/2])\not\subset B_{\mu_d\eps_d\rho/8}(p_n)
\hem\text{ or }\hem
B_{1/n}(\gamma([1/2,1])\not\subset\{U_n>0\}.
\end{equation}
Up to a subsequence, Lemma~\ref{LemCompactness} and Theorem~\ref{ThmContinuityOfRS} gives $U\in\C$ such that 
$$
U_n\to U \hem\text{ in }L_{loc}^{\infty}(\R^d)
\hem\text{ and }\hem
p_n\to p\in\partial B_1\cap\RS_U(\rho/2).
$$
Depending on whether $p$ lies in $\POSS$ or $\Gamma(U)$, we consider two cases.

\vem

\textit{Case 1: $p\in\POSS.$}

In this case, we find $0<\eps<\mu_d\eps_d\rho/20$ such that 
$
U>\eps \text{ in }B_{2\eps}(p).
$
For large $n$, uniform convergence of $U_n$ to $U$ gives
$$
U_n>0 \hem\text{ in }B_{2\eps}(p).
$$

With $p_n\to p$, we have $p_n\in B_\eps(p)$ for large $n$. Take a continuous curve $\gamma_n:[0,1/2]\to\partial B_1\cap B_\eps(p)$ with $\gamma_n(0)=p_n$ and $\gamma_n(1/2)=p$, then
\begin{equation}
\label{EqnFirstEquationFirstCase}
\gamma_n([0,1/2])\subset B_{\mu_d\eps_d\rho/20}(p_n) \hem\text{ and }\hem B_\eps(\gamma_n([0,1/2]))\subset\{U_n>0\}
\end{equation}
for large $n$.

With Theorem~\ref{ThmGlobalConnected} and the homogeneity of $U\in\C$, we find a continuous curve $\gamma_n:[1/2,1]\to\partial B_1\cap\{U>0\}$ such that $\gamma_n(1/2)=p$ and $\gamma_n(1)=e_1$. Compactness of $[1/2,1]$ implies
$$
m:=\min_{\gamma_n([1/2,1])}U>0.
$$
Uniform convergence of $U_n$ to $U$ together with Proposition~\ref{PropLipNonD} gives 
\begin{equation}
\label{EqnSecondEquationInFirstCase}
B_{c_dm}(\gamma_n([1/2,1]))\subset\{U_n>0\}
\end{equation}
for large $n$, where $c_d$ is a dimensional constant. 

Joining the curve $\gamma_n$ from $[0,1/2]$ and $[1/2,1]$, we get a continuous map  into $\{U_n>0\}\cap\partial B_1$ with $\gamma_n(0)=p_n$ and $\gamma_n(1)=e_1$. With \eqref{EqnFirstEquationFirstCase} and \eqref{EqnSecondEquationInFirstCase}, we have a contradiction to \eqref{EqnHemHemContra}.

\vem

\textit{Case 2: $p\in\Gamma(U)$.}

Take $r:=\mu_d\eps_d\rho/8$, then Lemma~\ref{LemFollowingOpenCover} gives, up to a rotation,
$$
|U-(x_d-p_d)_+|<\frac12\eps_d r\hem\text{ in }B_r(p)
$$
and
$$
|U_n-(x_d-p_d)_+|<\eps_d r\hem\text{ in }B_r(p)
$$
by convergence of $U_n$ to $U$.

Consequently, Lemma~\ref{LemIOF} implies that $\POSS$ and $\{U_n>0\}$ are $C^{2,\alpha}$-epigraphs in $B_{r/2}(p)$. Moreover,  for $\bar{p}:=\frac{p+\frac14re_d}{|p+\frac14re_d|}$, we have
$
U(\bar{p})\ge r/8
$
and
$$
\bar{p}\in\partial B_1\cap B_{r/2}(p)\cap\{U_n>0\}
$$
for large $n$. 

With $\{U_n>0\}$ being an epigraph in $B_{r/2}(p)$ and $p_n\to p$, we find a continuous map 
$$
\gamma_n:[0,1/2]\to\partial B_1\cap B_{r/2}(p)\cap\overline{\{U_n>0\}}
$$
such that 
$$
\gamma_n(0)=p_n \hem\text{ and }\hem\gamma_n(1/2)=\bar{p}.
$$
Our choice of $r$ implies
\begin{equation}
\label{EqnFirstEquationSecondCase}
\gamma_n([0,1/2])\subset B_{\mu_d\eps_d\rho/8}(p_n)
\end{equation} 
for large $n$. 

With similar argument as in \textit{Case 1}, we find $\gamma_n:[1/2,1]\to\partial B_1\cap\{U_n>0\}$ with $\gamma_n(1/2)=\bar{p}$, $\gamma_n(1)=e_1$ and 
\begin{equation}
\label{EqSecondEquationSecondCase}
B_\eta(\gamma_n([1/2,1]))\subset\{U_n>0\})
\end{equation} 
for some $\eta>0$ independent of $n$. 

Joining $\gamma_n$ from the two sub-intervals, we have $\gamma_n(0)=p_n$ and  $\gamma_n(1)=e_1$. With \eqref{EqnFirstEquationSecondCase} and \eqref{EqSecondEquationSecondCase}, we contradict \eqref{EqnHemHemContra} for large $n$. 
\end{proof} 

With this preparation, we give the main result of this section:
\begin{thm}
\label{ThmHarnack}
Suppose that $\varphi$ is a positive Jacobi field on $U\in\C$. 

For $\rho>0$, there is a constant $C$, depending only on $d$ and $\rho$, such that 
$$
\sup_{\partial B_1\cap\RS_{U}(\rho)}\varphi\le C\inf_{\partial B_1\cap\RS_{U}(\rho)}\varphi.
$$
\end{thm}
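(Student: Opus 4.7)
The strategy is to transport values of $\varphi$ between an arbitrary point $p\in\RS_U(\rho)\cap\partial B_1$ and the fixed anchor $e_1$ by chaining boundary Harnack estimates along the connecting curves supplied by Lemma~\ref{LemConnectivity}. Since every constant in the chain will depend only on $d$ and $\rho$, and the number of links is bounded in terms of $d$ and $\rho$ alone, the final comparison constant is uniform in $p$.

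First I will fix the anchor. Up to a rotation, Lemma~\ref{LemAnchoringPoint} provides $B_{r_d}(e_1)\subset\{U>0\}\cap\RS_U(\rho_d)$, so $\varphi(e_1)$ is a well-defined positive quantity and standard interior Harnack for the harmonic function $\varphi$ gives two-sided bounds on $\varphi$ inside $B_{r_d/2}(e_1)$ with a dimensional constant. For arbitrary $p\in\RS_U(\rho)\cap\partial B_1$, Lemma~\ref{LemConnectivity} yields $\delta=\delta(\rho,d)\in(0,1/4)$ and a continuous curve $\gamma:[0,1]\to\overline{\{U>0\}}\cap\partial B_1$ with $\gamma(0)=p$, $\gamma(1)=e_1$, $\gamma([0,1/2])\subset B_{\mu_d\eps_d\rho/8}(p)$, and $B_\delta(\gamma([1/2,1]))\subset\{U>0\}$.

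Next, I handle the "boundary" half $\gamma([0,1/2])$. By item (3) of Lemma~\ref{LemBasicPropertiesOfRS}, every point of $\overline{\{U>0\}}\cap B_{\mu_d\eps_d\rho/8}(p)$ lies in $\RS_U(\rho/2)$ (with a crude choice of constant). For any such $q$, Lemma~\ref{LemFollowingOpenCover} applied with $\eps=\eps_d/2$ shows that, in a ball $B_{r_0}(q)$ with $r_0:=\mu_d(\eps_d/2)(\rho/2)$, the function $U$ is $(\eps_d/2 \cdot r_0)$-close to a flat cone, and therefore, by Lemma~\ref{LemIOF}, $\{U>0\}\cap B_{r_0/2}(q)$ is a $C^{2,\alpha}$-epigraph whose defining function has norm bounded by a dimensional constant. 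After rescaling to unit size, the boundary Harnack inequality of Lemma~\ref{LemSchauder}(2) applies, and yields
\[
\sup_{B_{r_0/4}(q)\cap\overline{\{U>0\}}}\varphi\le C_d\inf_{B_{r_0/4}(q)\cap\overline{\{U>0\}}}\varphi,
\]
with $C_d$ depending only on $d$. Since $\gamma([0,1/2])$ has diameter at most $\mu_d\eps_d\rho/4$, I can cover it by at most $N_1=N_1(\rho,d)$ overlapping balls of radius $r_0/4$ centered on the curve, and chain the boundary Harnack inequalities to conclude
\[
\varphi(p)\le C_d^{N_1}\,\varphi(\gamma(1/2)).
\]

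For the "interior" half $\gamma([1/2,1])$, the tube $B_\delta(\gamma([1/2,1]))$ lies in $\{U>0\}$, so $\varphi$ is a positive harmonic function on it. Covering the curve by at most $N_2=N_2(\rho,d)$ overlapping balls of radius $\delta/2$ and applying the classical Harnack inequality on each, I transport $\varphi(\gamma(1/2))$ to a point in $B_{r_d/2}(e_1)$, and then to $e_1$ itself via interior Harnack in $B_{r_d}(e_1)$. Together with the previous step this gives a two-sided bound
\[
C(\rho,d)^{-1}\,\varphi(e_1)\le\varphi(p)\le C(\rho,d)\,\varphi(e_1),
\]
uniform in $p\in\RS_U(\rho)\cap\partial B_1$, which immediately implies the stated Harnack inequality by comparing the supremum and infimum to $\varphi(e_1)$.

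The main obstacle is ensuring that the chain connecting $p$ to $e_1$ can be covered by a bounded number of balls on each of which either Lemma~\ref{LemSchauder}(2) or the interior Harnack inequality applies with a uniform constant. The regularity scale language, together with the curve produced by Lemma~\ref{LemConnectivity}, is exactly what is needed: the first half stays within a neighborhood on which flatness (and hence the De Silva--Jerison--Shahgholian boundary Harnack) is available at a single scale depending only on $\rho$ and $d$, while the second half stays at a positive distance from the free boundary, neutralizing any interference from the (possibly very irregular) singular set of $U$ on the sphere.
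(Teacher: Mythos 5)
Your proposal follows essentially the same strategy as the paper's proof: anchor at $e_1$ via Lemma~\ref{LemAnchoringPoint}, transport values of $\varphi$ along the curve from Lemma~\ref{LemConnectivity} using interior Harnack on the tube around $\gamma([1/2,1])$, and use the De Silva--Jerison--Shahgholian boundary Harnack (Lemma~\ref{LemSchauder}(2)) on the flat region near $p$. The chain-counting differs cosmetically (the paper counts balls on $\gamma([1/2,1])$ via a disjointness argument and handles $\gamma([0,1/2])$ in a single comparison, since that arc already has diameter $\le \mu_d\eps_d\rho/4$), but the logic is the same.

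One technical imprecision worth flagging: you invoke Lemma~\ref{LemFollowingOpenCover} ``for any such $q$'' in $\overline{\{U>0\}}\cap B_{\mu_d\eps_d\rho/8}(p)$, but that lemma is stated for $u\in\M_0(B_1)$, i.e.\ it requires the center to be a \emph{free boundary} point (recall $\M_p$ from \eqref{EqnMP} demands $p\in\Gamma(u)$); its proof uses $|\nabla u|=1$ at the center, which fails at interior points. The fix is a dichotomy, exactly as in the paper's Step 2: either $B_{\mu_d\eps_d\rho/4}(p)\subset\{U>0\}$, in which case the interior Harnack inequality for the positive harmonic function $\varphi$ applies directly, or there is a free boundary point $q\in B_{\mu_d\eps_d\rho/4}(p)\cap\Gamma(U)$, and Lemma~\ref{LemFollowingOpenCover} applied \emph{at} $q$ (which lies in $\RS_U(\rho/2)$) produces a single flatness ball, after which Lemma~\ref{LemIOF} and Lemma~\ref{LemSchauder}(2) give the boundary Harnack comparison between $\varphi(p)$ and $\varphi(\gamma(1/2))$ in one step, with no chaining needed on that arc. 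With this adjustment your argument is correct and matches the paper's.
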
 
Recall the notion of positive Jacobi fields from Definition~\ref{DefPostiveJacobiField} and the space of minimizing cones $\C$ from \eqref{EqnCones}.

\begin{proof}
Up to a rotation, Lemma~\ref{LemAnchoringPoint} gives
$$
B_{r_d}(e_1)\subset\POSS\cap\RS(\rho_d)
$$
for dimensional $r_d,\rho_d>0$.

Given $\rho>0$, we pick an arbitrary $p\in\RS(\rho)$. It suffices to show that $\varphi(p)/\varphi(e_1)$ is bounded away from $0$ and infinity.

For this $\rho$, let $\delta$ be the constant from Lemma~\ref{LemConnectivity}. Let $\gamma$ be the curve from the same lemma connecting $p$ to $e_1$.

We bound $\varphi(p)/\varphi(e_1)$  in two steps. In the first step, we bound $\varphi(\gamma(1/2))/\varphi(e_1)$. In the second, we bound $\varphi(p)/\varphi(\gamma(1/2))$.

\vem

\textit{Step 1: Comparing $\varphi(\gamma(\frac12))$ with $\varphi(e_1)$.}

Define $t_0=1/2$ and $p_0=\gamma(t_0)$. 

Once $\{(t_j,p_j)\}_{j=0,1,2,\dots,n}$ have been picked, we take $p_{n+1}:=\gamma(t_{n+1})$ with
\begin{equation}
\label{TN+1}
t_{n+1}:=\inf\{t\in[1/2,1]:\hem B_{\delta/4}(\gamma(t))\cap[{\textstyle\bigcup_{j=0,1,2,\dots,n}}B_{\delta/4}(p_j)]\}.
\end{equation}
If the set on the right-hand side is empty, we terminate the process. 

By construction, we have
$$
\overline{B_{\delta/4}(p_{n+1})}\cap[{\textstyle \bigcup_{j=0,1,2,\dots,n}}\overline{B_{\delta/4}(p_j)}]\neq\emptyset.
$$
That is,  for some $j\le n$, 
$$
p_{n+1}\in \overline{B_{\delta/2}(p_j)}\subset B_{\delta}(p_j)\subset\POSS,
$$
where the last inclusion is from Lemma~\ref{LemConnectivity}.

With $\varphi$ being a positive harmonic function in $\POSS$, the Harnack inequality gives 
$$
c_d \varphi(p_j)\le \varphi(p_{n+1})\le C_d\varphi(p_j)
$$
for dimensional constants $c_d$ and $C_d$. 
Iterate this process, we get 
\begin{equation}
\label{LALALA}
c_d^n \varphi(p_0)\le \varphi(p_{n+1})\le C_d^n\varphi(p_0).
\end{equation}

It follows from the construction \eqref{TN+1} that $\{B_{\delta/4}(p_j)\}_{j=0,1,2,\dots,n+1}$ is a family of disjoint balls in $B_3$. Thus this process has to terminate in $N$ steps, with $N\le C\delta^{-d}$ for dimensional  $C$. It follows from \eqref{LALALA} that, for any $j=1,2,\dots, N$, we have
\begin{equation}
\label{LA}
c\varphi(p_0)\le\varphi(p_j)\le C\varphi(p_0)
\end{equation}
for constants $c$ and $C$ depending only on $d$ and $\delta.$

With the process terminating at step $N$ and that $e_1=\gamma(1)$, we see that 
$$
B_{\delta/4}(e_1)\cap[{\textstyle \bigcup_{j=0,1,2,\dots,N}}B_{\delta/4}(p_j)]\neq\emptyset.
$$
The same argument leading to \eqref{LALALA} gives
$$
c\varphi(p_j)\le\varphi(e_1)\le C\varphi(p_j)
$$
for some $j=1,2,\dots,N.$ Combined with \eqref{LA}, we get
$$
c\varphi(\gamma(1/2))\le \varphi(e_1)\le C\varphi(\gamma(1/2))
$$
for constants $c$ and $C$ depending only on $d$ and $\rho$.

\vem

\textit{Step 2: Comparing $\varphi(p)$ with $\varphi(\gamma(1/2))$.}

Recall from Lemma~\ref{LemConnectivity} that 
$$
\gamma(1/2)\in B_{\mu_d\eps_d\rho/8}(p).
$$
As a result, if $B_{\mu_d\eps_d\rho/4}(p)\subset\POSS$, we can apply  Harnack inequality for the harmonic function $\varphi$ to conclude
$$
c\varphi(\gamma(1/2))\le\varphi(p)\le C\varphi(\gamma(1/2))
$$
for dimensional constants $c$ and $C$. 

It remains to consider the case when we can find 
$$
q\in B_{\mu_d\eps_d\rho/4}(p)\cap\Gamma(U).
$$
With Lemma~\ref{LemIOF} and Lemma~\ref{LemFollowingOpenCover}, we see that $\POSS\cap B_{\mu_d\eps_d\rho/2}(q)$ satisfies the assumption in Lemma~\ref{LemSchauder}. Note that $\gamma(1/2)\in B_{\mu_d\eps_d\rho/8}(p)\subset B_{3\mu_d\eps_d\rho/8}(q)$, Lemma~\ref{LemSchauder} gives
$$
c\varphi(\gamma(1/2))\le\varphi(p)\le C\varphi(\gamma(1/2))
$$
for constants $c$ and $C$ depending only on $d$ and $\rho$. 
\end{proof}

\section{Principal eigenvalue of the Jacobi equation}
\label{sec:eigen}
Based on  Subsection~\ref{SubsectionJacobi}, the decay of positive Jacobi fields is determined by the principal eigenvalue of the Jacobi operator on the sphere. In the first subsection below, we give a dimensional lower bound on this eigenvalue. Even for minimizing cones with smooth sections on the sphere, this lower bound is new. In the second subsection, we use this to quantify the decay rate of positive Jacobi fields. The techniques here are inspired by the works of Simon \cite{Si}, Wang \cite{Wa}, and Zhu \cite{Z1}.

\subsection{Lower bound on the principal eigenvalue}
For a minimizing cone $U$, Lemma~\ref{LemGlobalQuant} gives a sign on  the mean curvature $H$ from \eqref{EqnMeanCurvature}. Moreover, this quantity vanishes at a point only for the flat cone in \eqref{EqnFlatCone}. 
We start with the stability of this classification:


\begin{lem}
\label{LemLowerBoundOnHessian}
For $U\in\SC$, there is a dimensional constant $\delta_d>0$ such that 
$$
\int_{\POSS^S}|D^2U|^2\ge\delta_d.
$$
\end{lem}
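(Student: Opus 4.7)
The plan is a compactness/contradiction argument: if the lemma fails, there is a sequence $U_n\in\SC$ with $\int_{\{U_n>0\}^S}|D^2U_n|^2\to0$, and I will show that for $n$ large $U_n$ must be a rotation of $\FlatC$, contradicting $U_n\in\SC$.

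First, I would extract a homogeneous limit. Since each $U_n\in\SC$ is a nontrivial cone with $0\in\Gamma(U_n)$, the Lipschitz and nondegeneracy bounds of Proposition~\ref{PropLipNonD} hold uniformly near the origin, so Lemma~\ref{LemCompactness} produces, along a subsequence, a locally uniform limit $U_\infty\in\M(\R^d)$. One-homogeneity passes to the limit, so $U_\infty\in\C$; Hausdorff convergence of the free boundaries keeps $0\in\Gamma(U_\infty)$, and nondegeneracy prevents $U_\infty\equiv0$.

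The heart of the argument is to show that $U_\infty$ is the flat cone. For any compact $K\subset\{U_\infty>0\}$, positivity of $U_\infty$ on $K$ and locally uniform convergence make each $U_n$ strictly positive on $K$ for large $n$, so $U_n$ is harmonic there by Proposition~\ref{PropEL}. Interior estimates then give $U_n\to U_\infty$ in $C^\infty_{\mathrm{loc}}(\{U_\infty>0\})$. Exhausting $\{U_\infty>0\}$ by such $K$'s and using Fatou,
\[
\int_{\{U_\infty>0\}^S}|D^2U_\infty|^2\le\liminf_n\int_{\{U_n>0\}^S}|D^2U_n|^2=0.
\]
By Theorem~\ref{ThmGlobalConnected} the positive set $\{U_\infty>0\}$ is connected, so $U_\infty$ is affine on it. Combined with $U_\infty\ge0$, $U_\infty(0)=0$, and the viscosity identity $|\nabla U_\infty|=1$ on $\Gamma(U_\infty)$ from Proposition~\ref{PropEL}, this forces $U_\infty$ to be a rotation of $\FlatC$.

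To close the argument, I would invoke improvement of flatness. For large $n$, locally uniform convergence gives (up to rotation) $|U_n-\FlatC|<\eps_d$ in $B_1$, so Lemma~\ref{LemIOF} represents $\{U_n>0\}\cap B_{1/2}$ as the epigraph of a $C^{2,\alpha}$ function $g_n$ with $g_n(0)=0$. The key observation is that the $1$-homogeneity of $U_n$ forces $g_n$ to be $1$-homogeneous as well, and a $1$-homogeneous $C^{2,\alpha}$ function vanishing at the origin must be linear; hence $\{U_n>0\}$ is a half-space, $U_n$ is a rotation of $\FlatC$, contradicting $U_n\in\SC$. The main subtlety I anticipate is the Fatou step, where the two integrals are over the generally different sets $\{U_n>0\}^S$ and $\{U_\infty>0\}^S$; inner exhaustion by compact subsets of $\{U_\infty>0\}$ (on which $C^\infty$ convergence holds) is the cleanest way to avoid any issue with the free boundaries of $U_n$ accumulating near $\Gamma(U_\infty)$.
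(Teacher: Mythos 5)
Your compactness/contradiction skeleton is the same as the paper's, and the argument is correct. The main structural difference is in the closing step: the paper applies the last conclusion of Lemma~\ref{LemCompactness} (singular points persist under limits) to deduce directly that the subsequential limit $U$ lies in $\SC$, so once $U$ is shown to be flat one has an immediate contradiction with $U\in\SC$ and no further analysis of the original sequence is needed. You instead establish flatness of $U_\infty\in\C$ and then run an extra $\varepsilon$-regularity argument (Lemma~\ref{LemIOF} plus the observation that a $1$-homogeneous $C^{2,\alpha}$ graph is linear) to conclude that each $U_n$ is eventually flat. That is valid, but it is a detour; you could shorten it by invoking $0\in\Sing(U_\infty)$ directly. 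A second, smaller difference: the paper propagates $D^2U=0$ from a single anchored ball $B_{r_d/2}(e_1)$ via subharmonicity of $|D^2U|^2$ (really, real-analyticity of $U$ and connectedness of $\{U>0\}$), whereas your Fatou-over-compact-exhaustion argument gives $D^2U_\infty=0$ on the whole spherical section in one go. Your version of this step is arguably more transparent, since you avoid the one-point-to-everywhere propagation; the care you take with the differing domains $\{U_n>0\}^S$ versus $\{U_\infty>0\}^S$ is exactly the right concern, and the inner exhaustion resolves it.
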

Recall the space of singular minimizing cones $\SC$ from \eqref{EqnCones}, and the notation for spherical intersections $E^S$ from \eqref{EqnES}.

\begin{proof}
Suppose not, we find a sequence $U_n\in\SC$ such that 
\begin{equation}
\label{JusForCotra}
\int_{B_2\cap\{U_n>0\}}|D^2U_n|^2\to0.
\end{equation} 
With Lemma~\ref{LemAnchoringPoint}, we can assume that $B_{r_d}(e_1)\subset\{U_n>0\}$ for a dimensional constant $r_d>0$.

Lemma~\ref{LemCompactness} implies that, up to a subsequence, we have
$$
U_n\to U\in\SC \hem\text{ locally uniformly in }\R^d \ \text{ and in }C^2(B_{r_d/2}(e_1)).
$$
With \eqref{JusForCotra}, we have 
$$
|D^2U|^2(e_1)=0.
$$

Since $U$ is harmonic in $\POSS$, $|D^2U|^2$ is subharmonic in the same set ($\Delta |D^2 U|^2 = 2|D^3 u|^2\ge 0$). With the connectedness of $\POSS$ from Theorem~\ref{ThmGlobalConnected}, the strong maximum principle implies that $D^2U=0$ in $\POSS$. Up to a rotation, this forces $U=\FlatC$, contradicting $U\in\SC.$
\end{proof} 

The $W^{2,2}$ norm of $U$ is controlled in terms of the mean curvature:
\begin{lem}
\label{LemMeanCurvatureControlsHessian}
For $U\in\C$, we have
$$
\int_{\POSS^S}|D^2U|^2\le\int_{\Reg(U)^S}H.
$$ 
\end{lem}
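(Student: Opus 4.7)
The plan is to perform an integration-by-parts argument in $\POSS\cap B_R$ combining the Bochner-type identity $|D^2 U|^2=\tfrac12\Delta|\nabla U|^2$ (valid wherever $U$ is harmonic, by Proposition~\ref{PropEL}) with the Neumann-type condition $|\nabla U|=1$ on $\Reg(U)$, and then exploit the $1$-homogeneity of $U$ to reduce the resulting $\R^d$ estimate to the spherical one in the statement.

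Concretely, I would choose a smooth cutoff $\eta_\epsilon\in C_c^\infty(\R^d\setminus\Sing(U))$ with $0\le\eta_\epsilon\le 1$ and $\eta_\epsilon\nearrow 1$ pointwise away from $\Sing(U)$, and integrate by parts against $\eta_\epsilon^2$ using $\Delta U=0$ to obtain
\begin{equation*}
\int_{\POSS\cap B_R}|D^2 U|^2\,\eta_\epsilon^2\,dx
=\int_{\partial(\POSS\cap B_R)}U_i\,U_{ij}\,n_j\,\eta_\epsilon^2\,d\sigma
-2\int_{\POSS\cap B_R}\eta_\epsilon\,U_i\,U_{ij}\,(\eta_\epsilon)_j\,dx.
\end{equation*}
On $\Reg(U)\cap B_R$ the outward normal is $-\nu$, and using $\nabla U=\nu$ together with $|\nabla U|=1$ (so that tangential derivatives of $|\nabla U|^2$ vanish) and $H=-U_{\nu\nu}$ from \eqref{EqnMeanCurvature}, the boundary integrand simplifies to $H\,\eta_\epsilon^2$. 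On $\POSS\cap\partial B_R$ the outward normal is $x/R$, and the Euler relation $\sum_j x_j U_{ij}=0$ (following from the $0$-homogeneity of $\nabla U$) makes the boundary integrand vanish identically. Hence the full boundary term collapses to $\int_{\Reg(U)\cap B_R}H\,\eta_\epsilon^2\,d\mathcal{H}^{d-1}$.

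The interior error is controlled via $|\nabla U|\le 1$ (Lemma~\ref{LemGlobalQuant}) and Cauchy--Schwarz by $C\,(\int|D^2 U|^2\eta_\epsilon^2)^{1/2}(\int|\nabla\eta_\epsilon|^2)^{1/2}$. By Corollary~\ref{CorEstSingular} and \eqref{EqnPartialInformation}, $\DimH(\Sing(U))\le d-d^*-1\le d-5$, so $\Sing(U)$ has codimension at least $5$, well above the critical threshold $2$ for being $\mathrm{cap}_2$-removable; accordingly, one may choose $\eta_\epsilon$ with $\int|\nabla\eta_\epsilon|^2\to 0$ as $\epsilon\to 0$. Setting $I_\epsilon:=\int|D^2 U|^2\eta_\epsilon^2$, the identity rearranges into $I_\epsilon\le\int_{\Reg(U)\cap B_R}H\,\eta_\epsilon^2+o(1)\sqrt{I_\epsilon}$, which first forces $I_\epsilon$ to stay bounded and then yields, after passing $\epsilon\to 0$ by monotone convergence on both sides (using $H\ge 0$ from Lemma~\ref{LemGlobalQuant}),
\begin{equation*}
\int_{\POSS\cap B_R}|D^2 U|^2\,dx\;\le\;\int_{\Reg(U)\cap B_R}H\,d\mathcal{H}^{d-1}.
\end{equation*}

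Finally, since $U$ is $1$-homogeneous, $|D^2 U|^2$ is $(-2)$-homogeneous and $H$ is $(-1)$-homogeneous, so polar decomposition gives both sides the common factor $R^{d-2}/(d-2)$; dividing reduces the claim to $\int_{\POSS^S}|D^2 U|^2\le\int_{\Reg(U)^S}H$, as desired. The main obstacle is constructing the cutoff $\eta_\epsilon$ with $\int|\nabla\eta_\epsilon|^2\to 0$ that isolates $\Sing(U)$: this is feasible precisely thanks to the codimension-$5$ bound on $\Sing(U)$, and the argument would break down if $\Sing(U)$ were allowed to have codimension below $2$.
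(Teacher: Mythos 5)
Your proof is correct and follows essentially the same approach as the paper: both integrate by parts $\mathrm{div}(\varphi^2 D^2U\nabla U)$ over a region of $\POSS$, use the $0$-homogeneity of $\nabla U$ to kill the radial boundary term(s), identify the free boundary integrand as $\varphi^2 H$ via $\nabla U=\nu$ and $D^2U\nabla U\cdot\nu=-H$, and remove the cutoff through a capacity argument justified by the codimension bound on $\Sing(U)$. The only cosmetic differences are that the paper works in the annulus $B_2\setminus B_1$ with a $0$-homogeneous cutoff (so both spherical boundary terms cancel at once), and absorbs the cross term via Young's inequality with a parameter $\eta$ rather than Cauchy--Schwarz, which avoids your preliminary step of showing $I_\epsilon$ stays bounded.
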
 

\begin{proof}
For positive $\eps$ and $\delta$, Corollary~\ref{CorEstSingular} and the compactness of $\Sing(U)^S$ give a finite collection $\{B_{r_j}(x_j)\}_{j=1,2,\dots,N}$ satisfying
$$
x_j\in\Sing(U)^S,\hem  0<r_j<\delta, \hem\cup B_{r_j}(x_j)\supset\Sing(U)^S,\hem \text{ and }\hem
\sum r_j^{d-3}<\eps.
$$
For each $B_{r_j}(x_j)$, we find a smooth function $\varphi_j$ on $\partial B_1$ with values in $[0,1]$ that satisfies $\varphi_j=0$ in $B_{r_j}(x_j)$, $\varphi_j=1$ outside $B_{2r_j}(x_j)$ and $\int|\nabla_\tau\varphi_j|^2\le Cr_j^{d-3}$.

 If we take
$\varphi=\min\{\varphi_j\},$
then it is a Lipschitz function on $\partial B_1$ with values in $[0,1]$ satisfying
\begin{equation}
\label{PropertiesOfPhi}
\varphi=1 \hem\text{ outside }B_{2\delta}(\Sing(U)), \hem \mathrm{spt}(\varphi)\subset\partial B_1\backslash\Sing(U),  
\hem\text{ and }\hem
\int_{\partial B_1}|\nabla_\tau\varphi|^2\le C\eps
\end{equation}
for a dimensional constant $C$. 
With an abuse of notation, we denote the $0$-homogeneous extension of $\varphi$ by the same notation.

\vem

 Take $A$ to be the annulus region
 $$
 A=B_2\backslash B_1, 
 $$
then we have
\begin{align}
\label{AlignA}
\int_{A\cap\POSS}\mathrm{div}(\varphi^2D^2U\nabla U)=&-\int_{A\cap\Reg(U)}\varphi^2D^2U\nabla U\cdot\nu\\ 
&-\int_{\partial B_1\cap\POSS}\varphi^2D^2U\nabla U\cdot\frac{x}{|x|}+\int_{\partial B_2\cap\POSS}\varphi^2D^2U\nabla U\cdot\frac{x}{|x|},\nonumber
\end{align}
where $\nu$ denotes the inner unit normal on $\partial\POSS$. 

The homogeneity of $U$ implies $D^2U\nabla U\cdot x=\frac12\nabla|\nabla U|^2\cdot x=0$. Thus  the second line of \eqref{AlignA} vanish. For  the first line, we apply Proposition~\ref{PropEL} to identify $\nu$ with $\nabla U$. Together wtih \eqref{EqnMeanCurvature} and Lemma~\ref{LemGlobalQuant}, this gives 
$
D^2U\nabla U\cdot\nu=-H.
$
Consequently, we have
\begin{equation}
\label{AlignA'}
\int_{A\cap\POSS}\mathrm{div}(\varphi^2D^2U\nabla U)=\int_{A\cap\Reg(U)}\varphi^2 H\le\int_{A\cap\Reg(U)} H.
\end{equation}

\vem

The harmonicity of $U$ in $\POSS$ gives 
$$
\mathrm{div}(\varphi^2D^2U\nabla U)=2\varphi D^2U\nabla U\cdot\nabla\varphi+\varphi^2|D^2U|^2 \hem\text{ in }\POSS.
$$
Meanwhile, for any constant $\eta\in(0,1)$, we have
$$
|2\varphi D^2U\nabla U\cdot\nabla\varphi|\le\eta\varphi^2|D^2U|^2+\eta^{-1}|\nabla U|^2|\nabla\varphi|^2.
$$
Combining these with Lemma~\ref{LemGlobalQuant}, we have
$$
\mathrm{div}(\varphi^2D^2U\nabla U)\ge (1-\eta)\varphi^2|D^2U|^2-\eta^{-1}|\nabla\varphi|^2\hem\text{ in }\POSS.
$$

Using \eqref{PropertiesOfPhi}, we deduce
$$
\int_{A\cap\POSS}\mathrm{div}(\varphi^2D^2U\nabla U)\ge(1-\eta)\int_{A\cap\POSS}\varphi^2|D^2U|^2-C\eta^{-1}\eps
$$
for a dimensional constant $C$. 
Combined with \eqref{AlignA'}, this gives
$$
(1-\eta)\int_{A\cap\POSS}\varphi^2|D^2U|^2-C\eta^{-1}\eps\le\int_{A\cap\Reg(U)} H.
$$

Sending $\eps,\delta\to0$,  this gives
$$
(1-\eta)\int_{A\cap\POSS}|D^2U|^2\le\int_{A\cap\Reg(U)} H
$$
where we used \eqref{PropertiesOfPhi}. Since $\eta\in(0,1)$ is arbitrary, we have
$$
\int_{A\cap\POSS}|D^2U|^2\le\int_{A\cap\Reg(U)} H.
$$
The desired conclusion follows from the homogeneity of $U$. 
\end{proof}

Combining Lemma~\ref{LemLowerBoundOnHessian} and Lemma~\ref{LemMeanCurvatureControlsHessian}, we have
\begin{cor}
\label{CorLowerBoundOnH}
For $U\in\SC$, there is a dimensional constant $\delta_d>0$ such that 
$$
\int_{\Reg(U)^S}H\ge\delta_d.
$$
\end{cor}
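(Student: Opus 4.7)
The proof is an immediate consequence of the two preceding lemmas, so my plan is just to chain them together. The plan is to start from the conclusion of Lemma~\ref{LemMeanCurvatureControlsHessian}, which yields
$$
\int_{\Reg(U)^S} H \;\ge\; \int_{\{U>0\}^S} |D^2 U|^2,
$$
valid for every $U \in \C$, and in particular for every $U \in \SC$. Then I would invoke Lemma~\ref{LemLowerBoundOnHessian}, which for $U \in \SC$ provides a dimensional constant $\delta_d > 0$ with
$$
\int_{\{U>0\}^S} |D^2 U|^2 \ge \delta_d.
$$
Combining these two inequalities gives $\int_{\Reg(U)^S} H \ge \delta_d$, which is precisely the claim.

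Since this is a direct corollary, there is no genuine obstacle; the conceptual content lies entirely in the two lemmas it combines. The only point worth being mindful of is that the dimensional constant $\delta_d$ carried into the conclusion is the one from Lemma~\ref{LemLowerBoundOnHessian}, which relied on a compactness/contradiction argument using Lemma~\ref{LemCompactness} together with the subharmonicity of $|D^2U|^2$ in $\{U>0\}$ and the strong maximum principle, applied on the connected positive set guaranteed by Theorem~\ref{ThmGlobalConnected}. There is nothing further to verify in the corollary itself.
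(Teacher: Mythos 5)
Your proposal is correct and matches the paper exactly: the paper also obtains the corollary by chaining Lemma~\ref{LemMeanCurvatureControlsHessian} with Lemma~\ref{LemLowerBoundOnHessian}. Nothing further to add.
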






As a consequence, we get a dimensional lower bound for principal eigenvalues of the Jacobi operator on singular cones. 
\begin{thm}
\label{ThmDimensionalLowerBound}
For $\Lambda_d$ in \eqref{EqnLambdaD}, we have
$$
\Lambda_d\ge\delta_d
$$
for a dimensional constant $\delta_d>0$.
\end{thm}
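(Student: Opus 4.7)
The plan is to test the Rayleigh quotient $\mathcal{Q}_U$ from \eqref{EqnQuotient} with a suitable Lipschitz approximation of the constant function $1$, and read off the conclusion from Corollary~\ref{CorLowerBoundOnH}. Concretely, I will show that for every $U \in \SC$ there is an admissible test function $f$ with $\mathcal{Q}_U(f) \leq -\delta_d/C$ for a dimensional constant, which, via the variational characterization \eqref{EqnVariationalPbOnSph}, translates into the desired uniform dimensional lower bound $\Lambda_d \geq \delta_d / C$.

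First I would fix $U \in \SC$ and, given $\epsilon, \delta > 0$, build a Lipschitz cutoff $\varphi_{\epsilon,\delta} : \partial B_1 \to [0,1]$ exactly as in the proof of Lemma~\ref{LemMeanCurvatureControlsHessian}: supported in $\partial B_1 \setminus \Sing(U)^S$, equal to $1$ outside $B_{2\delta}(\Sing(U)^S)$, and with $\int_{\partial B_1}|\nabla_\tau \varphi_{\epsilon,\delta}|^2 \leq C\epsilon$. The covering of $\Sing(U)^S$ underlying this construction exists because Corollary~\ref{CorEstSingular}, combined with the conicality of $U$, yields $\DimH(\Sing(U)^S) \leq d - d^* - 2 \leq d - 6$, which is strictly below the critical exponent $d - 3$ that appears in the Dirichlet energy of the cutoff.

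Next I would estimate the quotient. The gradient term contributes at most $C\epsilon$ to the numerator. For the curvature term, note that $H \geq 0$ on $\Reg(U)$ by Lemma~\ref{LemGlobalQuant} and $\varphi_{\epsilon,\delta}^2 \to 1$ pointwise on $\Reg(U)^S$ as $\epsilon,\delta \to 0$; monotone/dominated convergence (the latter when $\int H < \infty$, the former when $\int H = \infty$) together with Corollary~\ref{CorLowerBoundOnH} gives
$$
\liminf_{\epsilon,\delta\to 0}\int_{\Reg(U)^S} H\,\varphi_{\epsilon,\delta}^2 \;\geq\; \int_{\Reg(U)^S} H \;\geq\; \delta_d.
$$
Since the denominator $\int_{\{U>0\}^S}\varphi_{\epsilon,\delta}^2$ is uniformly bounded above by $\omega_{d-1} := |\partial B_1|$, selecting $\epsilon,\delta$ small enough yields
$$
\mathcal{Q}_U(\varphi_{\epsilon,\delta}) \;\leq\; \frac{C\epsilon - \delta_d/2}{\omega_{d-1}} \;\leq\; -\frac{\delta_d}{4\,\omega_{d-1}}.
$$
A routine $H^1$-approximation upgrades $\varphi_{\epsilon,\delta}$ to functions in $C_c^\infty(\partial B_1 \setminus \Sing(U))$ with essentially the same quotient, so the estimate passes to the class used in \eqref{EqnLambdaD}. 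All constants are dimensional and uniform in $U \in \SC$, which gives $\Lambda_d \geq \delta_d/(4\omega_{d-1})$.

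The main obstacle is ensuring that the cutoff construction works uniformly across all singular minimizing cones, including those with non-isolated spherical singular sets. This is precisely where Corollary~\ref{CorEstSingular} is essential: it provides a uniform Hausdorff dimension bound on $\Sing(U)^S$ that is strictly below $d-3$, and thus permits a covering with $\sum r_j^{d-3} < \epsilon$, making the Dirichlet energy of the cutoff arbitrarily small. Once this is in place, the rest reduces to applying Corollary~\ref{CorLowerBoundOnH} and a direct quotient estimate.
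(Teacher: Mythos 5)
Your proposal matches the paper's proof essentially step for step: test $\mathcal{Q}_U$ with the Lipschitz cutoff built in the proof of Lemma~\ref{LemMeanCurvatureControlsHessian} (whose Dirichlet energy is $\lesssim\eps$ thanks to the dimension bound $\DimH(\Sing(U)^S)\le d-d^*-2<d-3$), invoke Corollary~\ref{CorLowerBoundOnH} to push the curvature term above $\delta_d$ as $\eps,\delta\to0$, bound the denominator by $|\partial B_1|$, and read off the dimensional lower bound from \eqref{EqnVariationalPbOnSph}. The reasoning and the dependence of constants are correct, and the convergence/density remarks you add are routine.
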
 
\begin{rem}
A positive dimensional lower bound suffices for our purpose in this work. We remark, however, that it is an important question to quantify the sharp lower bound and to characterize the cones achieving this bound. See the discussions in Subsection~\ref{SubsectionPrincipalEigenValue}.
\end{rem} 

\begin{proof}
For $U\in\SC$, we take $\varphi$ constructed in the proof of Lemma~\ref{LemMeanCurvatureControlsHessian}, satisfying \eqref{PropertiesOfPhi} for $\eps,\delta>0$ to be chosen. Then the quotient from \eqref{EqnQuotient} satisfies
$$
\mathcal{Q}_U(\varphi)\le\frac{C\eps-\int_{\Gamma(U)\backslash B_{\delta}(\Sing(U))}H}{|\partial B_1|}
$$
for a dimensional constant $C$. 

By choosing $\eps, \delta$ small, Corollary~\ref{CorLowerBoundOnH} and \eqref{EqnVariationalPbOnSph} imply
$$
\lambda(U)\ge\delta_d
$$
for a dimensional constant $\delta_d>0$.

Since this holds for all $U\in\SC$, the desired conclusion follows. 
\end{proof} 

To use the tools from Section~\ref{SectionRegularityScales}, we localize this estimate to the collection $\RS(\rho)$ from Definition~\ref{DefRegularityScales}.
\begin{prop}
\label{PropEigenvalueInR}
For $U\in\SC$ and given $\kappa\in(0,1)$, we can find a parameter $\rho>0$, depending only on $d$ and $\kappa$, and a smooth function $\varphi:\partial B_1\to[0,1]$ satisfying
$$
\mathrm{spt}(\varphi)\cap\overline{\POSS}\subset\RS_U(\rho)
\hem\text{ and }\hem
\mathcal{Q}_U(\varphi)\le-(1-\kappa)\Lambda_d.
$$
Here $\Lambda_d>0$ is from \eqref{EqnLambdaD} and $\mathcal{Q}_U$ is the quotient from \eqref{EqnQuotient}.
\end{prop}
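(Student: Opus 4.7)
The plan is to argue by contradiction and compactness. Suppose the conclusion fails for some $\kappa_0 \in (0,1)$: then for each $n \in \N$ we may produce $U_n \in \SC$ for which no admissible $\varphi$ with $\mathrm{spt}(\varphi)\cap\overline{\{U_n>0\}} \subset \RS_{U_n}(1/n)$ achieves $\mathcal{Q}_{U_n}(\varphi) \le -(1-\kappa_0)\Lambda_d$. After rotating each $U_n$ so that the anchoring ball from Lemma~\ref{LemAnchoringPoint} sits at $e_1$, Lemma~\ref{LemCompactness} supplies a subsequential limit $U_\infty \in \C$. Because $0\in\Sing(U_n)$ for every $n$ (the apex of any cone in $\SC$) and the last assertion of Lemma~\ref{LemCompactness} preserves singular free boundary points under this convergence, we conclude that $0\in\Sing(U_\infty)$, whence $U_\infty \in \SC$.

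Since $\lambda(U_\infty)\ge\Lambda_d$ by Theorem~\ref{ThmDimensionalLowerBound}, the variational characterization \eqref{EqnVariationalPbOnSph} yields a test function $\tilde{f}\in C_c^\infty(\partial B_1\setminus\Sing(U_\infty);[0,1])$ with $\mathcal{Q}_{U_\infty}(\tilde{f})<-(1-\kappa_0/2)\Lambda_d$. The intersection $\mathrm{spt}(\tilde{f})\cap\overline{\{U_\infty>0\}}$ is a compact subset of $\{U_\infty>0\}\cup\Reg(U_\infty)$, so Proposition~\ref{PropOpenCover} combined with compactness furnishes some $\rho_0>0$ such that $\mathrm{spt}(\tilde{f})\cap\overline{\{U_\infty>0\}}\subset\RS_{U_\infty}(2\rho_0)$. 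The key transfer step is that $\tilde{f}$ itself is admissible for $U_n$ once $n$ is large and $1/n<\rho_0$: if not, some subsequence admits $p_n\in\mathrm{spt}(\tilde{f})\cap\overline{\{U_n>0\}}$ with $\rho_{U_n}(p_n)\le\rho_0$; Hausdorff convergence of positive sets (Lemma~\ref{LemCompactness}) then gives $p_n\to p_\infty\in\mathrm{spt}(\tilde{f})\cap\overline{\{U_\infty>0\}}$, forcing $\rho_{U_\infty}(p_\infty)>2\rho_0$ while the limit of $\rho_{U_n}(p_n)$ is $\le \rho_0$, contradicting Theorem~\ref{ThmContinuityOfRS}.

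Finally, one checks $\mathcal{Q}_{U_n}(\tilde{f})\to\mathcal{Q}_{U_\infty}(\tilde{f})$. Since $\mathrm{spt}(\tilde{f})$ lies at positive distance from $\Sing(U_\infty)$, the uniform convergence $U_n\to U_\infty$ together with the flatness criterion of Lemma~\ref{LemIOF} renders the free boundaries $\Gamma(U_n)\cap\mathrm{spt}(\tilde{f})$ uniformly $C^{2,\alpha}$ graphs converging smoothly to $\Gamma(U_\infty)\cap\mathrm{spt}(\tilde{f})$; hence the mean curvatures $H_{U_n}$ converge uniformly on this set, while the $L^\infty$ convergence of $U_n$ and Hausdorff convergence of the positive sets handle the interior integrals. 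So $\mathcal{Q}_{U_n}(\tilde{f})<-(1-\kappa_0)\Lambda_d$ for large $n$, and $\tilde{f}$ is a valid $\varphi$ for $U_n$, contradicting the choice of $U_n$. The delicate point is the support transfer: the naive bound on $\rho$ obtained from compactness of $\mathrm{spt}(\tilde{f})$ for a single cone is $U$-dependent, and it is only through the two-variable continuity of $\rho_U$ in Theorem~\ref{ThmContinuityOfRS} that this bound becomes uniform along the sequence, giving the required dimensional $\rho$.
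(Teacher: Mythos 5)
Your argument is correct and follows the paper's own proof almost exactly: contradiction via compactness (Lemma~\ref{LemCompactness}), producing a test function supported away from $\Sing(U_\infty)$ with quotient below $-(1-\kappa/2)\Lambda_d$, and using Theorem~\ref{ThmContinuityOfRS} to transfer both the support condition and the quotient bound along the sequence. The only cosmetic differences are that you invoke the variational characterization~\eqref{EqnVariationalPbOnSph} directly (the paper reuses the explicit cutoff from the proof of Theorem~\ref{ThmDimensionalLowerBound}), you make explicit the step that $U_\infty\in\SC$ (via $0\in\Sing(U_n)\to 0\in\Sing(U_\infty)$), and there is a small slip — the hypothetical bad points satisfy $\rho_{U_n}(p_n)\le 1/n$, not $\le\rho_0$ — which does not affect the conclusion.
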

\begin{proof}
For a given $\kappa\in(0,1)$, suppose that there is no such $\rho>0$, then we find a sequence $U_n\in\SC$ such that whenever $\varphi:\partial B_1\to[0,1]$ is a smooth function with 
$$
\mathcal{Q}_{U_n}(\varphi)\le-(1-\kappa)\Lambda_d,
$$
we must have
\begin{equation}
\label{MustHave}
\mathrm{spt}(\varphi)\cap\overline{\{U_n>0\}}\backslash\RS_{U_n}(1/n)\neq\emptyset.
\end{equation}

\vem

Up to a subsequence, Lemma~\ref{LemCompactness} gives $U_n\to U\in\SC$ locally uniformly in $L^\infty$ and in $H^1$. For this $U$, the proof of Theorem~\ref{ThmDimensionalLowerBound} gives a smooth function $\varphi:\partial B_1\to[0,1]$ with 
\begin{equation}
\label{EqnQUQU}
\mathcal{Q}_U(\varphi)\le -(1-\kappa/2)\Lambda_d.
\end{equation}
and
$$
\mathrm{spt}(\varphi)\subset\partial B_1\backslash\Sing(U).
$$
With Proposition~\ref{PropOpenCover} and the compactness of $\mathrm{spt}(\varphi)$, we find $\rho>0$ such that 
\begin{equation}
\label{TheSupportIsGood}
\spt(\varphi)\cap\overline{\POSS}\subset\RS_U(\rho).
\end{equation} 

\vem

With Lemma~\ref{LemIOF}, Lemma~\ref{LemCompactness},  Lemma~\ref{LemFollowingOpenCover} and Lemma~\ref{LemLimInf}, we see that $\{U_n>0\}$ converges to $\POSS$ in $C^{2,\alpha}$ on $\spt(\varphi)$. It follows from \eqref{EqnQUQU} that 
$$
\mathcal{Q}_{U_n}(\varphi)\le-(1-\kappa)\Lambda_d
$$
for large $n$. With \eqref{MustHave}, we find 
$$
p_n\in\spt(\varphi)\cap\overline{\{U_n>0\}}\backslash\RS_{U_n}(1/n).
$$

Up to a subsequence, we have
$$
p_n\to p\in\spt(\varphi)\cap\overline{\POSS}.
$$
Theorem~\ref{ThmContinuityOfRS} gives $\rho_U(p)=0,$ contradicting \eqref{TheSupportIsGood}.
\end{proof} 

Finally, we relate these estimates to an eigenvalue problem in the collection of points with bounded regularity scales:
\begin{cor}
\label{CorAlmostEigenFunction}
For $U\in\SC$ and given $\kappa\in(0,1)$, there is $\rho>0$, depending only on $d$ and $\kappa$, such that we can find a nonnegative function $\varphi:\partial B_1\to\R$ satisfying
$$
\spt(\varphi)\cap\overline{\POSS}\subset\RS(\rho)
$$
and 
$$\begin{cases}
\Delta_{\Sph}\varphi\ge(1-\kappa)\Lambda_d \varphi &\text{ in }\POSS^S,\\
\varphi_\nu+H\varphi\ge0 &\text{ on }\Gamma(U)^S,
\end{cases}$$
where $\Lambda_d>0$ is from \eqref{EqnLambdaD}.
\end{cor}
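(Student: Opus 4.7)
The plan is to promote the variational inequality of Proposition~\ref{PropEigenvalueInR} to an actual (weak) supersolution via a constrained direct minimization on a suitable subdomain of the sphere.

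Given $\kappa\in(0,1)$, the first step is to apply Proposition~\ref{PropEigenvalueInR} with parameter $\kappa/2$. This produces $\rho_1>0$ (depending only on $d$ and $\kappa$) together with a smooth test function $\psi:\partial B_1\to[0,1]$ with
$$
\spt(\psi)\cap\overline{\POSS}\subset\RS(\rho_1)
\hem\text{ and }\hem
\mathcal{Q}_U(\psi)\le-\bigl(1-\tfrac{\kappa}{2}\bigr)\Lambda_d.
$$
Using item (3) of Lemma~\ref{LemBasicPropertiesOfRS}, for $\delta>0$ sufficiently small I can fix an open neighborhood $V\subset\partial B_1$ of $\spt(\psi)\cap\overline{\POSS}$ with $\overline V\cap\overline{\POSS}\subset\RS(\rho)$, where $\rho:=\rho_1-\delta$ depends only on $d$ and $\kappa$. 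Since $\overline V\cap\overline{\POSS}$ avoids $\Sing(U)$ (the regularity scale is strictly positive there), the piece $\overline V\cap\Gamma(U)^S$ is smooth with a uniform bound on the mean curvature $H$, via Lemma~\ref{LemIOF}.

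Next, I would study the constrained minimization
$$
\mu:=\inf\bigl\{\mathcal{Q}_U(f):\hem f\in H^1(V\cap\POSS^S),\hem f=0 \text{ on }\partial V\cap\overline{\POSS^S},\hem f\not\equiv 0\bigr\}.
$$
The test function $\psi$ certifies that $\mu\le-(1-\kappa/2)\Lambda_d$. Standard direct-method arguments---Rellich compactness, lower semi-continuity of the Dirichlet energy, and continuity of the $H$-weighted boundary term thanks to the uniform regularity of $V\cap\Gamma(U)^S$---should yield a minimizer, which I would take to be nonnegative by replacing it with its absolute value. The Euler--Lagrange identity, obtained by integration by parts in $V\cap\POSS^S$, then gives
$$
\Delta_{\Sph}\varphi=-\mu\varphi\hem\text{ in }V\cap\POSS^S,
\qquad
\varphi_\nu+H\varphi=0\hem\text{ on }V\cap\Gamma(U)^S,
$$
with Dirichlet condition $\varphi=0$ on $\partial V\cap\overline{\POSS^S}$. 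Extending $\varphi$ by zero to all of $\partial B_1$ preserves the $H^1$ regularity and yields a nonnegative function with $\spt(\varphi)\cap\overline{\POSS}\subset\overline V\cap\overline{\POSS}\subset\RS(\rho)$. Since $-\mu\ge(1-\kappa)\Lambda_d$, the interior equation delivers $\Delta_{\Sph}\varphi\ge(1-\kappa)\Lambda_d\,\varphi$ inside $V$, and the Neumann identity gives $\varphi_\nu+H\varphi\ge 0$ on $V\cap\Gamma(U)^S$.

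The main point requiring care will be the behavior of the zero-extended $\varphi$ at $\partial V\cap\Gamma(U)^S$, where it may fail to be $C^1$. My plan is to interpret both inequalities of the conclusion in their natural weak form: for every nonnegative $\eta\in C_c^\infty(\partial B_1\setminus\Sing(U))$, combining the Euler--Lagrange identity inside $V$ with the vanishing of $\varphi$ outside $\overline V$ yields
$$
\int\nabla_\tau\varphi\cdot\nabla_\tau\eta-\int_{\Gamma(U)^S}H\varphi\,\eta\le -(1-\kappa)\Lambda_d\int\varphi\,\eta.
$$
This weak formulation is precisely what the downstream integration-by-parts arguments of Section~\ref{sec:eigen} (in the spirit of Wang \cite{Wa}) require.
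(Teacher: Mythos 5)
Your proposal follows essentially the same route as the paper's proof: apply Proposition~\ref{PropEigenvalueInR} to get a smooth test function with small Rayleigh quotient and support in a region of controlled regularity scale, enlarge the support slightly to a domain $V$ (the paper calls it $\Omega$) whose closure still lies in $\RS(\rho)$, solve the constrained eigenvalue problem on that domain by the direct method (Dirichlet condition on $\partial V\cap\overline{\POSS^S}$, natural Robin condition on the free boundary), take the absolute value, extend by zero, and read off the supersolution property from $-\mu\ge(1-\kappa)\Lambda_d$. The only differences are cosmetic: you apply Proposition~\ref{PropEigenvalueInR} with $\kappa/2$ to build in slack you never need (the paper simply uses $\kappa$), and you spell out the distributional interpretation of the zero-extension at $\partial V\cap\Gamma(U)^S$, whereas the paper states the corresponding subsolution property tersely. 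One small detail worth including: the paper additionally asks $\partial\Omega$ to meet $\Gamma(U)$ transversally, which keeps the corner between the Dirichlet and Robin portions of the boundary of $\Omega\cap\POSS^S$ nondegenerate and makes the direct-method and Euler--Lagrange arguments routine.
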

Recall the space of singular cones $\SC$ from \eqref{EqnCones} and the notation for spherical intersection from \eqref{EqnES}. Here $\nu$ denotes the inner unit normal to the free boundary $\Gamma(U)$, and $H$ denotes the mean curvature as in \eqref{EqnMeanCurvature}.

\begin{proof}
For given $\kappa$, Proposition~\ref{PropEigenvalueInR} gives $\rho>0$, depending only on $d$ and $\kappa$, and a smooth function $\tilde{\varphi}:\partial B_1\to[0,1]$ with $\spt(\tilde{\varphi})\cap\overline{\POSS}\subset\RS(\rho)$ such that 
\begin{equation}
\label{1}
\mathcal{Q}_U(\tilde{\varphi})\le-(1-\kappa)\Lambda_d.
\end{equation}
The compactness of $\spt(\tilde{\varphi})$ and the openness of $\RS(\rho)$ (see Lemma~\ref{LemBasicPropertiesOfRS}) give a smooth domain $\Omega\subset\partial B_1$ such that $\partial\Omega$ intersects $\Gamma(U)$ transversally and 
$$
\spt(\tilde{\varphi})\subset\Omega,\hem\text{ and }\hem\Omega\cap\overline{\POSS}\subset\RS(\rho).
$$

\vem

The direct method gives a minimizer $\varphi$ to the following minimization problem
$$
-\lambda:=\inf\{\mathcal{Q}_U(f):\hem f\in H^1_0(\Omega)\}.
$$
Replacing $\varphi$ with its absolute value if necessary, we can assume $\varphi\ge0$. The minimizer solves the Euler-Lagrange equation
$$\begin{cases}
\Delta_{\Sph}\varphi=\lambda \varphi &\text{ in }\Omega\cap\POSS,\\
\varphi_\nu+H\varphi=0 &\text{ on }\Omega\cap\Gamma(U).
\end{cases}$$

Extending $\varphi$ to the entire $\partial B_1$ by $0$, we get a subsolution to this system outside $\Omega$. With \eqref{1}, we see that $\lambda\ge(1-\kappa)\Lambda_d$. 
\end{proof}

\subsection{Decay of positive Jacobi fields}
For  $\Lambda_d$ from \eqref{EqnLambdaD}, the bound in \eqref{PropStabilityEigenBound} implies the existence of real roots for the following equation 
$$
\gamma(\gamma-d+2)+\Lambda_d=0.
$$
Define $\gamma_d$ as the smaller root, namely,
\begin{equation}
\label{EqnGammaD}
\gamma_d:=\frac{d-2}{2}-\sqrt{\left(\frac{d-2}{2}\right)^2-\Lambda_d}\in\left(0,\frac{d-2}{2}\right].
\end{equation} 
The range of $\gamma_d$ follows from Theorem~\ref{ThmDimensionalLowerBound}.

Based on Subsection~\ref{SubsectionJacobi}, the value of $\gamma_d$ dictates the rate of decay for positive Jacobi fields. We start with a weak estimate, inspired by \cite{Wa}.
\begin{lem}
\label{LemWeakDecayJacobiField}
Suppose that $\omega$ is a positive Jacobi field on $U\in\SC$, and that $\gamma_d$ is from \eqref{EqnGammaD}.

For given $\gamma\in(0,\gamma_d)$, there is $\bar{\rho}>0$, depending only on $d$ and $\gamma$, such that 
$$
\inf_{\partial B_r\cap\RS_U(r\rho)}\omega\le r^{-\gamma}\sup_{\partial B_1\cap\RS_U(\rho)}\omega
$$
for all $r\ge1$ and $0<\rho<\bar{\rho}.$
\end{lem}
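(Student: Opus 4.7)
My plan is a Wang-type integration-by-parts argument \cite{Wa} combined with an Euler ODE comparison, with Corollary~\ref{CorAlmostEigenFunction} supplying the critical test function. Given $\gamma\in(0,\gamma_d)$, I fix $\gamma'\in(\gamma,\gamma_d)$ and pick $\kappa\in(0,1)$ small enough that the smaller root $\gamma_\kappa$ of $\alpha^2-(d-2)\alpha+(1-\kappa)\Lambda_d=0$ satisfies $\gamma'<\gamma_\kappa<\gamma_d$. Applying Corollary~\ref{CorAlmostEigenFunction} to this $\kappa$ supplies $\bar\rho>0$ (depending only on $d$ and $\gamma$) and a nonnegative $\varphi:\partial B_1\to\R$ with $\spt(\varphi)\cap\overline{\POSS}\subset\RS_U(\bar\rho)$ satisfying $\Delta_{\Sph}\varphi\ge(1-\kappa)\Lambda_d\varphi$ in $\POSS^S$ and $\varphi_\nu+H\varphi\ge 0$ on $\Gamma(U)^S$. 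I extend $\varphi$ to $\R^d$ by $0$-homogeneity.

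The heart of the proof is an Euler inequality for the weighted average $f(r):=\int_{\POSS^S}\omega(r\theta)\varphi(\theta)\,d\theta$. Writing $\Delta\omega=0$ in polar coordinates and integrating against $\varphi$ yields $f''+\tfrac{d-1}{r}f'+\tfrac{1}{r^2}\int_{\POSS^S}\varphi\Delta_{\Sph}\omega(r\cdot)=0$. Green's identity on $\POSS^S\subset\Sph$ produces a boundary Wronskian on $\Gamma(U)^S$ of the form $\int[\varphi\,\partial_{\nu_S}\omega(r\cdot)-\omega(r\cdot)\,\partial_{\nu_S}\varphi]$; identifying the outer spherical normal $\nu_S$ with $-\nu$ (the inner $\R^d$-normal to $\Gamma(U)$), using the $(-1)$-homogeneity of $H$ on the cone to transport the Jacobi equation $\omega_\nu+H\omega=0$ to radius $r$, and invoking the Corollary~\ref{CorAlmostEigenFunction} inequality for $\varphi$, this Wronskian is nonnegative. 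Combined with $\Delta_{\Sph}\varphi\ge(1-\kappa)\Lambda_d\varphi$, I obtain the inequality $r^2f''(r)+(d-1)rf'(r)+(1-\kappa)\Lambda_d f(r)\le 0$ on $[1,\infty)$. Substituting $h(r):=r^{\gamma_\kappa}f(r)$ kills the zeroth-order term (this is exactly why $\gamma_\kappa$ was chosen as a root of the characteristic polynomial) and reduces the inequality to $(r^\beta h'(r))'\le 0$ with $\beta:=d-1-2\gamma_\kappa>1$. Integrating, $h(r)\le h(1)+C(d,\gamma)|h'(1)|$ uniformly on $[1,\infty)$; this is precisely the place where the assumption $\beta>1$ (equivalent to $\gamma_\kappa<(d-2)/2$, ensured by $\kappa>0$) is used to bound $\int_1^\infty s^{-\beta}\,ds$.

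To close the argument I control the initial data and convert to an infimum bound. Since $\spt(\varphi)\subset\RS_U(\bar\rho)$ is a region where the free boundary is smooth with uniformly bounded geometry, Lemma~\ref{LemSchauder} combined with Theorem~\ref{ThmHarnack} bounds $\omega$ and $|\nabla\omega|$ on $\spt(\varphi)$ by the supremum $M:=\sup_{\partial B_1\cap\RS_U(\rho)}\omega$ up to a constant depending only on $d$ and $\gamma$. Hence $f(1)+|f'(1)|\le CM$ and $f(r)\le CMr^{-\gamma_\kappa}$ for $r\ge 1$. Taking $K:=\{\varphi\ge 1/2\}\cap\POSS^S$, which has positive measure and lies in $\RS_U(\bar\rho)$, and using the cone-scaling identity $\RS_U(r\bar\rho)=r\RS_U(\bar\rho)$ (Lemma~\ref{LemScalingRS}), one has $rK\subset\partial B_r\cap\RS_U(r\rho)$ for $\rho<\bar\rho$, and $f(r)\ge\tfrac12|K|\inf_{rK}\omega$ produces $\inf_{\partial B_r\cap\RS_U(r\rho)}\omega\le C'Mr^{-\gamma_\kappa}\le C'Mr^{-\gamma'}$. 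For $r$ beyond a threshold depending only on $d$ and $\gamma$, $C'r^{-(\gamma'-\gamma)}\le 1$, giving the claimed inequality; the remaining bounded range $r\in[1,r_0]$ is absorbed by further shrinking $\bar\rho$, which enlarges the set on which $M$ is taken.

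The main obstacle will be the sign of the boundary Wronskian on $\Gamma(U)^S$ in Step~2. Its nonnegativity depends on a precise matching between the equality in the Jacobi equation for $\omega$ and the inequality for $\varphi$ from Corollary~\ref{CorAlmostEigenFunction}, mediated by the $(-1)$-homogeneity of $H$ on the cone and the relation $\nu_S=-\nu$; either sign convention reversed, or either of the two conditions relaxed, would produce a sign-indefinite boundary term and destroy the ODE argument. The indispensability of Corollary~\ref{CorAlmostEigenFunction} (rather than an abstract eigenfunction from \eqref{EqnSphJacobi}) is equally essential, for only its support inside $\RS_U(\bar\rho)$ allows one to invoke Lemma~\ref{LemSchauder} to bound $f(1)$ and $f'(1)$ in Step~4.
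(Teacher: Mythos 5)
Your overall blueprint matches the paper's: you build the test function $\varphi$ via Corollary~\ref{CorAlmostEigenFunction}, derive the Euler-type differential inequality $r^2 f'' + (d-1)rf' + \lambda f \le 0$ for $f(r) = \int_{\POSS^S}\omega(r\theta)\varphi(\theta)\,d\theta$ (including the correct sign of the boundary Wronskian), and finish by comparing $\inf$ and $\sup$ over the support of $\varphi$ via Lemma~\ref{LemScalingRS}. The discrepancy---and it is a genuine gap---is in the ODE analysis. Your substitution $h(r) = r^{\gamma_\kappa}f(r)$ gives $(r^\beta h')' \le 0$, but integrating this only yields $h(r) \le h(1) + C|h'(1)|$: you get a multiplicative constant and you are forced to control $h'(1)$, i.e.\ $f'(1)$, i.e.\ the \emph{radial} derivative of $\omega$ near $\partial B_1$. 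Neither is harmless. The lemma statement has no constant $C$, and your proposed fix---absorbing the constant for $r$ in a bounded range by ``shrinking $\bar\rho$''---does not work: shrinking $\rho$ enlarges \emph{both} the set on which the infimum is taken and the set on which the supremum is taken, and there is no a priori reason the inequality improves for a bounded $r$ as $\rho\to 0$ (if $\omega$ were nearly constant on $\RS_U(\rho)$ this would fail). Separately, Theorem~\ref{ThmHarnack} is stated on $\partial B_1$ only; bounding $\partial_r\omega$ requires an $L^\infty$ control of $\omega$ in an annulus, which needs an additional chaining argument across nearby spheres that you do not supply.

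The paper's proof sidesteps all of this with a sharper change of variables. Instead of $h = r^{\gamma_\kappa}f$, it sets $J(t) := t^{-\gamma\alpha}I(t^{-\alpha})$ with $\alpha^{-1} = d-2-2\gamma > 0$, where crucially $\gamma$ is the \emph{target} exponent and $\lambda := -\gamma(\gamma-d+2) < \Lambda_d$ is fed directly into Corollary~\ref{CorAlmostEigenFunction}. A direct computation then shows $J''(t) \le 0$ on $(0,\infty)$, so $J$ is concave; since $J \ge 0$ on the whole half-line, concavity forces $J' \ge 0$. Undoing the change of variables, this is exactly $(r^\gamma I(r))' \le 0$, hence $r^\gamma I(r) \le I(1)$ for $r \ge 1$---with no constant, no need for $I'(1)$, and therefore no absorption step. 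From there the $\inf$/$\sup$ comparison you describe in your last paragraph gives the stated inequality verbatim. To repair your write-up, replace the $h$-substitution with this $J$-substitution; the rest of your proposal (the boundary Wronskian sign computation, the use of Corollary~\ref{CorAlmostEigenFunction}, the final step) is sound.
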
 
Recall notations from \eqref{EqnCones}, Definitions~\ref{DefPostiveJacobiField} and \ref{DefRegularityScales}.
\begin{proof}
With the ordering between $\RS(\rho)$ from Lemma~\ref{LemBasicPropertiesOfRS}, it suffices to prove the estimates for one $\rho>0$.

For given $\gamma\in(0,\gamma_d)$, we take
\begin{equation}
\label{LambdaTemp}
\lambda:=-\gamma(\gamma-d+2).
\end{equation} 
Then $0<\lambda<\Lambda_d$. As a result, Corollary~\ref{CorAlmostEigenFunction} gives $\rho>0$, depending only on $d$ and $\gamma$, and a nonnegative function $\varphi$ on $\partial B_1$ with 
\begin{equation}
\label{LalalaSPT}
\spt(\varphi)\cap\overline{\POSS}\subset\RS(\rho)
\end{equation}
and 
\begin{equation}
\label{TempSubsolutionSystem}
\begin{cases}
\Delta_{\Sph}\varphi\ge\lambda\varphi &\text{ in }\POSS^S,\\
\varphi_\nu+H\varphi\ge0 &\text{ on }\Gamma(U)^S.
\end{cases}
\end{equation}
Recall our notation for spherical intersections from \eqref{EqnES}.

\vem

For $r>0$, define the following  quantity
$$
I(r):=\int_{\POSS^S}\omega(r\theta)\varphi(\theta)dH^{d-1}(\theta).
$$
Note that the integrand is supported on $\Reg(U)$, which allows differentiation of $I(\cdot)$:
$$
I'(r)=\int_{\POSS^S}\frac{\partial}{\partial r}\omega (r\theta)\varphi(\theta) dH^{d-1}(\theta),\hem\text{ and }I''(r)=\int_{\POSS^S}\frac{\partial^2}{\partial r^2}\omega(r\theta)\varphi(\theta)dH^{d-1}(\theta).
$$
As a result, we have
\begin{align*}
I''(r)+\frac{d-1}{r}I'(r)&=\int_{\POSS^S}\left[\frac{\partial^2 }{\partial r^2}+\frac{d-1}{r}\frac{\partial }{\partial r}\right]\omega(r\theta)\varphi(\theta) dH^{d-1}(\theta)\\
&=-\frac{1}{r^2}\int_{\POSS^S}\Delta_{\Sph}\omega(r\theta)\,\varphi(\theta) dH^{d-1}(\theta)
\end{align*}
since $\Delta\omega=0$ in $\POSS$. 
An integration by parts and \eqref{TempSubsolutionSystem} imply, denoting $\bar \omega (\theta) = \omega(r \theta)$,
\begin{align*}
\int_{\POSS^S}\Delta_{\Sph}\bar \omega\,\varphi&=\int_{\POSS^S}\bar \omega\Delta_{\Sph}\varphi-\int_{\Gamma(U)^S}\bar \omega_\nu\varphi+\int_{\Gamma(U)^S}\bar \omega\varphi_\nu \ge\lambda\int_{\POSS^S}\bar\omega\varphi.
\end{align*}
Consequently, we have
\begin{equation}
\label{EqnDifferentialInequalityI}
I''(r)+\frac{d-1}{r}I'(r)\le -\frac{\lambda}{r^2}I(r) \hem\text{ for all }r>0.
\end{equation}

\vem

To use this differential inequality, for $\alpha>0$, we define for $t>0$
$$
J(t):=t^{-\gamma\alpha}I(t^{-\alpha}).
$$
Direct differentiation gives
$$
J'(t)=-\gamma\alpha t^{-\gamma\alpha-1}I(t^{-\alpha})-\alpha t^{-\gamma\alpha-\alpha-1}I'(t^{-\alpha})
$$
and
$$
J''(t)=\alpha^2t^{-\gamma\alpha-2\alpha-2}\left\{I''(t^{-\alpha})+\frac{2\gamma+\alpha^{-1}+1}{t^{-\alpha}}I'(t^{-\alpha})+\frac{\gamma(\gamma+\alpha^{-1})}{t^{-2\alpha}}I(t^{-\alpha})\right\}.
$$

We pick $\alpha$ such that 
$$
\alpha^{-1}=d-2-2\gamma.
$$
Note that $\gamma<\gamma_d$ in \eqref{EqnGammaD} implies $\alpha>0$. We have, with $r=t^{-\alpha}$,
$$
J''(t)=\alpha^2t^{-\gamma\alpha-2\alpha-2}\left\{I''+\frac{d-1}{r}I'+\frac{\lambda}{r^2}I\right\}\le0 \hem\text{ for all }t>0,
$$
where we used \eqref{LambdaTemp} and \eqref{EqnDifferentialInequalityI}.

Since $J$ stays nonnegative on $(0,+\infty)$, this implies that $J'\ge0$. In terms of $I$, this gives
$
rI'(r)+\gamma I(r)\le 0.
$
Or, equivalently,
$$
(r^\gamma I)'\le0 \hem\text{ for all }r>0.
$$

\vem
For $r\ge1,$ we have $r^\gamma I(r)\le I(1)$, that is,
$$
\int_{\POSS^S}\omega(r\theta)\varphi(\theta)dH^{d-1}(\theta)\le r^{-\gamma}\int_{\POSS^S}\omega(\theta)\varphi(\theta)dH^{d-1}(\theta).
$$
With \eqref{LalalaSPT} and Lemma~\ref{LemScalingRS}, we have
$$
\int_{\POSS^S}\omega(r\theta)\varphi(\theta)dH^{d-1}(\theta)\ge \inf_{\partial B_r\cap\RS(r\rho)}\omega \int_{\POSS^S}\varphi
$$
and
$$
\int_{\POSS^S}\omega(\theta)\varphi(\theta)dH^{d-1}(\theta)\le\sup_{\partial B_1\cap\RS(\rho)}\omega \int_{\POSS^S}\varphi.
$$
The conclusion follows.
\end{proof} 

Lemma~\ref{LemWeakDecayJacobiField} can be upgraded into a strong estimate with the Harnack inequality in Theorem~\ref{ThmHarnack}:
\begin{prop}
\label{PropDecayJacobiField}
Suppose that $\omega$ is a positive Jacobi field on $U\in\SC$, and that $\gamma_d$ is from \eqref{EqnGammaD}.

For given $\gamma\in(0,\gamma_d)$, there are constants $\rho$ and $C$, depending only on $d$ and $\gamma$, such that 
$$
\sup_{\partial B_r\cap\RS_U(r\rho)}\omega\le C r^{-\gamma}\inf_{\partial B_1\cap\RS_U(\rho)}\omega
$$
for all $r\ge1.$
\end{prop}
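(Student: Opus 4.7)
The plan is to combine the weak integral-style decay from Lemma~\ref{LemWeakDecayJacobiField} with two applications of the Harnack inequality from Theorem~\ref{ThmHarnack}: one to pass from a sup-bound to an inf-bound on $\partial B_1$, and one to pass from an inf-bound to a sup-bound on $\partial B_r$.

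First, I fix $\gamma\in(0,\gamma_d)$ and choose $\rho\in(0,\bar\rho)$, where $\bar\rho$ is the constant provided by Lemma~\ref{LemWeakDecayJacobiField}. Applying Theorem~\ref{ThmHarnack} directly on the unit sphere to the positive Jacobi field $\omega$ yields a constant $C_1=C_1(d,\rho)=C_1(d,\gamma)$ such that
\[
\sup_{\partial B_1\cap\RS_U(\rho)}\omega\le C_1\inf_{\partial B_1\cap\RS_U(\rho)}\omega.
\]
Inserting this into Lemma~\ref{LemWeakDecayJacobiField} gives
\[
\inf_{\partial B_r\cap\RS_U(r\rho)}\omega\le C_1\,r^{-\gamma}\inf_{\partial B_1\cap\RS_U(\rho)}\omega.
\]

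The second step is to convert the infimum on $\partial B_r$ into a supremum by scaling. Define $\tilde\omega(x):=\omega(rx)$. Since $U$ is a cone, $U(rx)/r=U(x)$, so $\{U>0\}$ and $\Gamma(U)$ are invariant under the dilation $x\mapsto rx$. Harmonicity of $\omega$ in $\{U>0\}$ is clearly preserved. On $\Reg(U)$, the inward unit normal is invariant under the dilation, while the mean curvature satisfies $H(rx)=H(x)/r$; together with $\tilde\omega_\nu(x)=r\,\omega_\nu(rx)$, the Jacobi boundary condition
\[
\omega_\nu(rx)+H(rx)\omega(rx)=0
\]
multiplies through by $r$ to give $\tilde\omega_\nu(x)+H(x)\tilde\omega(x)=0$. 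Hence $\tilde\omega$ is itself a positive Jacobi field on $U$. Moreover, by Lemma~\ref{LemScalingRS} (and the cone structure of $U$), $\rho_U(rx)=r\rho_U(x)$, so $\partial B_1\cap\RS_U(\rho)$ corresponds under the dilation exactly to $\partial B_r\cap\RS_U(r\rho)$. Applying Theorem~\ref{ThmHarnack} to $\tilde\omega$ at level $\rho$ on $\partial B_1$ produces the same constant $C_1$ and yields
\[
\sup_{\partial B_r\cap\RS_U(r\rho)}\omega=\sup_{\partial B_1\cap\RS_U(\rho)}\tilde\omega\le C_1\inf_{\partial B_1\cap\RS_U(\rho)}\tilde\omega=C_1\inf_{\partial B_r\cap\RS_U(r\rho)}\omega.
\]

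Chaining the two inequalities gives the claim with $C=C_1^2$. The only non-routine point is checking the scaling invariance of the Jacobi equation and of the regularity scale on the cone $U$, which is a short computation using homogeneity; the crucial input is that $C_1$ in Theorem~\ref{ThmHarnack} depends only on $d$ and $\rho$, so no dependence on $r$ or on the particular cone $U$ leaks into the final constant.
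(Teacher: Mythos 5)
Your proposal is correct and follows exactly the route indicated by the paper, which states the proposition as a direct upgrade of Lemma~\ref{LemWeakDecayJacobiField} via the Harnack inequality of Theorem~\ref{ThmHarnack} but leaves the details to the reader. Your key observations — that $\tilde\omega(x)=\omega(rx)$ is again a positive Jacobi field on the (scale-invariant) cone $U$ thanks to the scalings $H(rx)=H(x)/r$ and $\tilde\omega_\nu(x)=r\,\omega_\nu(rx)$, that $\rho_U(rx)=r\rho_U(x)$ so $\RS_U(\rho)\cap\partial B_1$ maps to $\RS_U(r\rho)\cap\partial B_r$ under dilation, and that the Harnack constant from Theorem~\ref{ThmHarnack} depends only on $d$ and $\rho$ and not on the particular cone $U$ or on $r$ — are precisely the points that make the two applications of Harnack (one on $\partial B_1$, one on $\partial B_r$) legitimate and yield the claimed pointwise decay with $C=C_1^2$.
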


\section{Separation between ordered minimizers}
\label{sec:sep}
At small scales, the separation between ordered minimizers in $\OM(\cdot)$ (see Definition~\ref{DefOrderedMinimizers}) is modeled by  positive Jacobi fields. This intuition was pointed out in the pioneering works by De Silva-Jerison-Shahgholian \cite{DJS} and Edelen-Spolaor-Velichkov \cite{EdSV}.

We begin with a linearization lemma in a form that is convenient to us:
\begin{lem}
\label{LemLinearization}
Suppose that $U\in\M(\R^d)$ satisfies
$
B_{r_d}(e_1)\subset\POSS\cap\RS_U(\rho_d)
$
for $r_d,\rho_d$ from Lemma~\ref{LemAnchoringPoint}.

For a sequence $(u_n,v_n)\in\OM(B_n)$ satisfying
$$
u_n,v_n\to U \hem\text{ locally uniformly in }\R^d,
$$
define
$$
\varphi_n:=\frac{v_n-u_n}{(v_n-u_n)(e_1)}.
$$

Then for positive $R$ and $\rho$, there is a constant $C>1$, depending on $d$, $R$, $\rho$ and $U$, such that 
\begin{equation}
\label{EqnLinearizationLip}
\|\varphi_n\|_{C^1(B_R\cap\RS_{u_n}(\rho))}\le C \hem\text{ and }\hem C^{-1}\le\inf_{B_R\cap\RS_{u_n}(\rho)}\varphi_n\le\sup_{B_R\cap\RS_{u_n}(\rho)}\varphi_n\le C
\end{equation} 
for all $n\in\N$.

Moreover, up to a subsequence, we have
$$
\varphi_n\to\varphi \hem\text{ locally uniformly in }C^2(\POSS),
$$
where $\varphi$ is a positive Jacobi field on $U$ as in Definition~\ref{DefPostiveJacobiField}.
\end{lem}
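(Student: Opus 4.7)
The plan is to first establish the two-sided bound and $C^1$ control in \eqref{EqnLinearizationLip} via a Harnack chain, and then extract a limit by Arzel\`a--Ascoli and verify that it is a positive Jacobi field on $U$. Since $(u_n,v_n)\in\OM(B_n)$, we have $\{u_n>0\}\subset\{v_n>0\}$, and by Proposition~\ref{PropEL} both $u_n$ and $v_n$ are harmonic on $\{u_n>0\}$; hence $\varphi_n$ is harmonic on $\{u_n>0\}$, strictly positive there by Definition~\ref{DefOrderedMinimizers}, and normalized so that $\varphi_n(e_1)=1$. The hypothesis $B_{r_d}(e_1)\subset\POSS\cap\RS_U(\rho_d)$ together with locally uniform convergence yields $u_n>c>0$ on $B_{r_d/2}(e_1)$ for $n$ large, so $e_1$ serves as a clean anchor for the chain.

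Fix $\rho'\in(0,\rho)$. By Theorem~\ref{ThmContinuityOfRS} and locally uniform convergence, any subsequential limit of points in $\overline{B_R}\cap\RS_{u_n}(\rho)$ lies in $\overline{B_R}\cap\RS_U(\rho')$, which is a compact set. By Theorem~\ref{ThmGlobalConnected} the set $\POSS$ is connected, so every point of $\overline{B_R}\cap\RS_U(\rho')$ can be joined to $e_1$ by a continuous path in $\overline{\POSS}\backslash\Sing(U)$; a compactness argument in the spirit of Lemma~\ref{LemConnectivity} then produces a finite cover by balls $\{B_j\}_{j=1}^N$, depending only on $d,R,\rho,U$, such that in each $B_j$ one of two situations holds: either (i) $B_j\Subset\POSS$, in which case $B_j\subset\{u_n>0\}$ for $n$ large and classical interior Harnack applies to the harmonic function $\varphi_n$; or (ii) $B_j$ is centered near $\Reg(U)$ at scale comparable to $\rho$, in which case Lemma~\ref{LemFollowingOpenCover} together with uniform convergence promotes flatness from $U$ to $u_n$, and the ordering $u_n\le v_n$ together with $v_n\to U$ promotes flatness to $v_n$, so that the flat boundary Harnack in Lemma~\ref{LemHarnackForDifference} applies. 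Iterating across the cover yields the two-sided bound on $\varphi_n$ on $B_R\cap\RS_{u_n}(\rho)$ with constants depending only on $d,R,\rho,U$.

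The $C^1$ bound in \eqref{EqnLinearizationLip} then follows by combining interior harmonic estimates on $\{u_n>0\}$ with the $C^{2,\alpha}$ bound in Lemma~\ref{LemHarnackForDifference} near regular free boundary points of $u_n$ at scale $\rho$. The same $C^{2,\alpha}$ estimates, together with interior Schauder estimates, give precompactness in $C^2_{loc}(\overline{\POSS}\backslash\Sing(U))$, so Arzel\`a--Ascoli produces a subsequential limit $\varphi$. Harmonicity of $\varphi$ in $\POSS$ is immediate from $\Delta\varphi_n=0$; strict positivity follows from the lower bound of the Harnack chain; and the Neumann-type condition $\varphi_\nu+H\varphi=0$ on $\Reg(U)$ is obtained by linearizing $|\nabla u_n|^2=|\nabla v_n|^2=1$ on $\Gamma(u_n),\Gamma(v_n)$, using the $C^{2,\alpha}$ convergence of free boundaries guaranteed by Lemma~\ref{LemIOF} and the identity $H=-u_{\nu\nu}$ from \eqref{EqnMeanCurvature}, along the lines of \cite{DJS,EdSV,JS}. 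The main obstacle is the uniform Harnack chain: one must simultaneously transfer flatness from $U$ to $u_n$ via Lemma~\ref{LemFollowingOpenCover}, promote it from $u_n$ to $v_n$ via ordering and $v_n\to U$, and relate $\RS_{u_n}(\rho)$ to $\RS_U(\rho')$ via Theorem~\ref{ThmContinuityOfRS}, all with constants independent of $n$; once this is in place the rest is routine.
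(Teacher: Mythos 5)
Your proposal is correct and reaches the same conclusion, but the route differs from the paper's in three noticeable ways, so it is worth comparing them.

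First, the overall structure: you build the two-sided bound in \eqref{EqnLinearizationLip} \emph{directly} by covering the compact set $\overline{B_R}\cap\RS_U(\rho')$ with finitely many balls and iterating Harnack along a chain anchored at $e_1$. The paper instead argues \emph{by contradiction}: it supposes the estimate fails, extracts $p_n\in B_1\cap\RS_{u_n}(\rho)$ with $|\nabla\varphi_n(p_n)|+|\log\varphi_n(p_n)|\to\infty$, passes to a limit $p\in\overline{B_1}\cap\RS_U(\rho/2)$ (using Theorem~\ref{ThmContinuityOfRS} exactly as you do), and then obtains a uniform bound near a \emph{single} limit point $p$ via a path from $p$ to $e_1$. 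The contradiction device is cleaner because it sidesteps the need to construct a cover that works uniformly in $n$; your direct version works too but the ``compactness argument in the spirit of Lemma~\ref{LemConnectivity}'' requires some care (one must fix a connected compact $\tilde K\subset\POSS\cup\Reg(U)$ containing $\overline{B_R}\cap\RS_U(\rho')\cup\{e_1\}$, cover it once and for all, and then use Theorem~\ref{ThmContinuityOfRS} plus a sequential compactness step to show that for $n$ large $B_R\cap\RS_{u_n}(\rho)$ stays inside a fixed neighborhood of $\tilde K$).

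Second, the tool near the free boundary: in the flat case you invoke Lemma~\ref{LemHarnackForDifference} (Proposition 5.1 in \cite{DJS}), which gives the Harnack inequality and $C^{2,\alpha}$ control directly for the normalized difference $\varphi_n$ of two nearby ordered minimizers under a flatness hypothesis. The paper invokes Lemma~\ref{LemSchauder} (Theorem 5.2 in \cite{DJS}), which is stated for solutions of the \emph{linearized} equation on a $C^{2,\alpha}$ epigraph. Since $\varphi_n$ is a difference of solutions of the nonlinear problem, not an exact Jacobi field, your choice of Lemma~\ref{LemHarnackForDifference} is arguably the more directly applicable tool at this stage; the paper's invocation of Lemma~\ref{LemSchauder} is the natural tool only for the limit $\varphi$. (Both lemmas come from the same source and are designed to work in tandem, so this is a cosmetic rather than substantive difference.)

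Third, the ``Moreover'' clause: the paper simply cites \cite[Theorem 6.2]{EdSV} for the precompactness and for the fact that the limit is a positive Jacobi field, whereas you re-derive this from the $C^{2,\alpha}$ estimates, Arzel\`a--Ascoli, and linearization of the free boundary condition $|\nabla u_n|=|\nabla v_n|=1$ using \eqref{EqnMeanCurvature}. Your sketch is the standard derivation and is correct; the paper buys brevity by citing the known result. In all three respects your argument is sound — the one place to be careful, which you flag yourself, is the uniformity of the chain in $n$, and the paper's contradiction structure is the more economical way to handle that.
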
 
Recall the space of minimizers $\M(\cdot)$ and the space of ordered minimizers $\OM(\cdot)$ from \eqref{EqnM} and Definition~\ref{DefOrderedMinimizers}. 
Collection of points with controlled regularity scales, $\RS(\cdot)$, is defined in Definition~\ref{DefRegularityScales}. 

\begin{proof}
The compactness of $\{\varphi_n\}$ and properties of the  limit $\varphi$ were established in Theorem 6.2 of \cite{EdSV}. Below we prove \eqref{EqnLinearizationLip} for $R=1$ and a given $\rho>0$.

Suppose the estimates in \eqref{EqnLinearizationLip} fail, then up to a subsequence, we can find $$p_n\in B_1\cap\RS_{u_n}(\rho)$$ such that 
\begin{equation}
\label{ANOTHERCONTRA}
|\nabla\varphi_n(p_n)|+|\log\varphi_n(p_n)|\to\infty. 
\end{equation}
Picking a further subsequence, we apply Theorem~\ref{ThmContinuityOfRS} to get 
$$
p_n\to p\in\overline{ B_1}\cap\RS_U(\rho/2).
$$

Depending on whether $p\in\POSS$ or $p\in\Gamma(U)$, we seek a contradiction in two cases. The argument is similar to the proofs of Lemma~\ref{LemConnectivity} and Theorem~\ref{ThmHarnack}, thus we only sketch the main ideas.

\vem

\textit{Case 1: $p\in\POSS.$}

In this case, we find $\delta>0$ such that $B_{5\delta}(p)\subset\POSS$. With the connectedness of $\POSS$ from Theorem~\ref{ThmGlobalConnected}, there is a continuous curve in the interior of $\POSS$ that connects $p$ to $e_1$. With $u_n,v_n\to U$, this curve stays in the interior of $\{u_n>0\}\subset\{v_n>0\}$ for large $n$. In particular, the difference $(v_n-u_n)$ is a positive harmonic function in a neighborhood of this curve. 

Consequently, Harnack inequality for harmonic functions gives a constant $C>0$ such that $C^{-1}\le \frac{v_n-u_n}{(v_n-u_n)(e_1)}\le C$ in a neighborhood of this curve. In terms of $\varphi_n$, this gives
$$
C^{-1}\le \varphi_n\le C \hem\text{ in }B_{4\delta}(p).
$$
For large $n$, we have $p_n\in B_{\delta}(p)$. Gradient estimates for harmonic functions gives
$$
|\nabla\varphi_n(p_n)|+|\log\varphi_n(p_n)|\le C \hem\text{ for large }n,
$$
contradicting \eqref{ANOTHERCONTRA}.

\vem

\textit{Case 2: $p\in\Gamma(U).$}

In this case, Lemma~\ref{LemIOF} and Lemma~\ref{LemFollowingOpenCover} imply that, up to a rotation,  $\POSS\cap B_{\mu_d\rho}(p)$ and $\{u_n>0\}\cap B_{\mu_d\rho}(p)$ are epigraphs of $C^{2,\alpha}$ functions in the $x_d$-direction. Moreover, if we take $\bar{p}=p+\frac12\mu_d\rho e_d$, then $U(\bar{p})\ge c\rho$. 
A similar argument as in Case 1 gives a constant $C>0$ such that 
$$
C^{-1}\le \varphi_n(\bar{p})\le C
$$
for large $n$. 

With $U(\bar{p})\ge c\rho$, we have $\bar{p}\in\{u_n>0\}\cap B_{\frac12\mu_d\rho}(p)$ for all large $n$. Thus we can apply Lemma~\ref{LemSchauder} to conclude
$$
|\nabla\varphi_n(p_n)|+|\log\varphi_n(p_n)|\le C \hem\text{ for large }n,
$$
contradicting \eqref{ANOTHERCONTRA}.
\end{proof} 

With this, we give an unconditional control on the growth of the separation between ordered minimizers:
\begin{lem}
\label{LemGrowthControl}
Given $A>2$ and $\rho\in(0,\rho_d)$, where $\rho_d$ is from Lemma~\ref{LemAnchoringPoint}, there is a constant $L>2A$, depending only on $d$, $A$ and $\rho$, such that the following holds:

For $(u,v)\in\OM(B_L)$ with 
$$0\in\GU\hem\text{ and }\hem\Gamma(v)\cap B_1\neq\emptyset,$$ we have
$$
\sup_{\partial B_A\cap\RS_u(A\rho)}(v-u)\le L\sup_{\partial B_1\cap\RS_u(\rho)}(v-u).
$$
\end{lem}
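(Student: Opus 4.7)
I would argue by compactness and contradiction. Suppose the conclusion fails, so for each $n\in\N$ there is $(u_n,v_n)\in\OM(B_n)$ with $0\in\Gamma(u_n)$, $\Gamma(v_n)\cap B_1\neq\emptyset$, and
\[
\sup_{\partial B_A\cap\RS_{u_n}(A\rho)}(v_n-u_n) \;>\; n\,\sup_{\partial B_1\cap\RS_{u_n}(\rho)}(v_n-u_n).
\]
By Lemma~\ref{LemCompactness}, along a subsequence $u_n\to U$ and $v_n\to V$ locally uniformly, with $U,V\in\M(\R^d)$, $U\le V$, $0\in\Gamma(U)$, and $\Gamma(V)\cap\overline{B_1}\neq\emptyset$. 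After a rotation, Lemma~\ref{LemAnchoringPoint} places $B_{r_d}(e_1)\subset\{U>0\}\cap\RS_U(\rho_d)$; since $\rho<\rho_d$, Theorem~\ref{ThmContinuityOfRS} yields $e_1\in\partial B_1\cap\RS_{u_n}(\rho)$ for all sufficiently large $n$, so $e_1$ furnishes a common anchoring point adapted to both the limit $U$ and every $u_n$ eventually.

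\textbf{Ruling out $V\neq U$.} The next step is to show that $V\equiv U$. Otherwise $V-U$ is a nonnegative harmonic function on the connected open set $\{U>0\}$ (Theorem~\ref{ThmGlobalConnected}) that is strictly positive somewhere, hence strictly positive throughout $\{U>0\}$ by the strong maximum principle. In particular $(V-U)(e_1)>0$. Locally uniform convergence on $B_{r_d/2}(e_1)\subset\{U>0\}$ then forces $(v_n-u_n)(e_1)\to(V-U)(e_1)>0$, which supplies a uniform positive lower bound on the denominator $\sup_{\partial B_1\cap\RS_{u_n}(\rho)}(v_n-u_n)$, while the Lipschitz bound from Proposition~\ref{PropLipNonD} uniformly controls the numerator on $\partial B_A$. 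The ratio is then bounded, contradicting $n\to\infty$; hence $V=U$.

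\textbf{Linearization when $V=U$.} With $V=U$ secured, set $\varphi_n:=(v_n-u_n)/(v_n-u_n)(e_1)$, well defined since the ordering gives $(v_n-u_n)(e_1)>0$. Lemma~\ref{LemLinearization}, applied with $R=A$ and regularity parameter $\rho$, yields a constant $C=C(d,A,\rho,U)$ with $\varphi_n\le C$ on $B_A\cap\RS_{u_n}(\rho)$. Since $A>1$ gives $\RS_{u_n}(A\rho)\subset\RS_{u_n}(\rho)$, and since $\varphi_n(e_1)=1$ with $e_1\in\partial B_1\cap\RS_{u_n}(\rho)$, dividing the blow-up inequality by $(v_n-u_n)(e_1)$ gives
\[
C \;\ge\; \sup_{\partial B_A\cap\RS_{u_n}(A\rho)}\varphi_n \;>\; n\,\sup_{\partial B_1\cap\RS_{u_n}(\rho)}\varphi_n \;\ge\; n,
\]
which is the desired contradiction. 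The chief subtlety is the dichotomy step: ensuring that a nontrivial ordered pair in the limit forces strict positivity of $V-U$ at the fixed anchor $e_1$ (so the denominator cannot degenerate), and that once $V=U$ the linearization of Lemma~\ref{LemLinearization} genuinely provides two-sided control of $\varphi_n$ on $\RS_{u_n}(\rho)$ uniformly in $n$; all subsequent work is organizational.
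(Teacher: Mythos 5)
Your overall strategy matches the paper's: a compactness/contradiction argument, anchoring at $e_1$ via Lemma~\ref{LemAnchoringPoint} and Theorem~\ref{ThmContinuityOfRS}, reduction to $U=V$, then Lemma~\ref{LemLinearization} to bound the normalized ratio $\varphi_n$ and reach a contradiction. That structure is correct and essentially identical to the paper's proof.

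The gap is in the step \emph{Ruling out $V\neq U$}. You assert that if $V\neq U$ then $V-U$ is ``strictly positive somewhere'' in $\{U>0\}$, and then invoke the strong maximum principle for the harmonic function $V-U$ on the connected set $\{U>0\}$. But $V\neq U$ only guarantees $V>U$ at some $x_0$, and a priori $x_0$ could lie in $\{V>0\}\setminus\overline{\{U>0\}}$, in which case $V-U$ may vanish identically on $\{U>0\}$ while $U\neq V$ globally. What your argument actually yields is only the dichotomy: either $(V-U)(e_1)>0$ (producing the desired contradiction with the blow-up of the ratio), or $V\equiv U$ on $\{U>0\}$. In the latter case one still needs to upgrade to $V\equiv U$ on all of $\R^d$ before Lemma~\ref{LemLinearization} (which requires $u_n,v_n\to U$, the \emph{same} limit) can be applied. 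This is precisely where the paper invokes Theorem~\ref{ThmStricMax}, the strong maximum principle of Edelen--Spolaor--Velichkov: from $V\equiv U$ on $\{U>0\}$ one deduces $\Gamma(U)\subset\Gamma(V)$, hence $0\in\Gamma(U)\cap\Gamma(V)$, and the free-boundary strong maximum principle forces $U\equiv V$. Your proposal skips this ingredient; once you insert the appeal to Theorem~\ref{ThmStricMax} at that point, the argument is complete and coincides with the paper's.
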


\begin{proof}
For given $A$ and $\rho\in(0,\rho_d)$, suppose that there is no such $L$, we find a sequence $(u_n,v_n)\in\OM(B_n)$ with $0\in\Gamma(u_n)$, $\Gamma(v_n)\cap B_1\neq\emptyset$ and 
\begin{equation}
\label{AnotherContra}
\sup_{\partial B_A\cap\RS_{u_n}(A\rho)}(v_n-u_n)>n \sup_{\partial B_1\cap\RS_{u_n}(\rho)}(v_n-u_n).
\end{equation}
Proposition~\ref{PropLipNonD} implies the uniform boundedness of the left-hand side, thus
\begin{equation}
\label{AnotherContra'}
\sup_{\partial B_1\cap\RS_{u_n}(\rho)}(v_n-u_n)\to0.
\end{equation}

Up to a subsequence, Lemma~\ref{LemCompactness} gives
$$
u_n\to U\in\M(\R^d) \hem\text{ and }\hem v_n\to V\in\M(\R^d)\hem\text{ locally uniformly. }
$$
Up to a rotation, Lemma~\ref{LemAnchoringPoint} gives
$$
B_{r_d}(e_1)\subset\POSS\cap\RS_U(\rho_d).
$$
With Theorem~\ref{ThmContinuityOfRS}, this gives
$$
e_1\in\RS_{u_n}(\rho)
$$
for all large $n$.

It follows from \eqref{AnotherContra'} that $U(e_1)=V(e_1)$.  Theorem~\ref{ThmStricMax} implies that 
$$
U=V \hem\text{ in }\R^d.
$$
As a result, we can apply Lemma~\ref{LemLinearization} with $R=A$ in \eqref{EqnLinearizationLip} to get
$$
\sup_{\partial B_A\cap\RS_{u_n}(A\rho)}(v_n-u_n)\le \sup_{ B_A\cap\RS_{u_n}(\rho)}(v_n-u_n)\le C(v_n-u_n)(e_1)
$$
With $e_1\in\RS_{u_n}(\rho)$ for large $n$, we have
$$
\sup_{\partial B_1\cap\RS_{u_n}(\rho)}(v_n-u_n)\ge(v_n-u_n)(e_1).
$$

Combining these two estimates, we get a contradiction to \eqref{AnotherContra}.
\end{proof} 

Using the singularity structure, the separation between ordered minimizers decays:
\begin{lem}
\label{LemDecay}
Given $\gamma\in(0,\gamma_d)$ for $\gamma_d$ from \eqref{EqnGammaD}, there are constants $\rho$ and $\delta$ small,  and $A$ large, depending only on $d$ and $\gamma$, such that the following holds:

For $(u,v)\in\OM(B_{\delta^{-1}})$ with 
$$
0\in\Sing(u),\hem\Gamma(v)\cap B_\delta\neq\emptyset
$$
and
$$
W(u;0,2)-W(u;0,1)<\delta,
$$
we have
$$
\sup_{\partial B_A\cap\RS_u(A\rho)}(v-u)\le A^{-\gamma}\sup_{\partial B_1\cap\RS_u(\rho)}(v-u).
$$
\end{lem}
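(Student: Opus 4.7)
The plan is to argue by contradiction and compactness, reducing the separation estimate to the sharp decay estimate for positive Jacobi fields (Proposition~\ref{PropDecayJacobiField}) on a singular minimizing cone. First fix an intermediate exponent $\gamma'\in(\gamma,\gamma_d)$ and apply Proposition~\ref{PropDecayJacobiField} with $\gamma'$ to obtain constants $\rho_0,C_0$, depending only on $d$ and $\gamma$, such that every positive Jacobi field $\omega$ on every $U\in\SC$ satisfies
$$
\sup_{\partial B_r\cap\RS_U(r\rho_0)}\omega\le C_0\, r^{-\gamma'}\inf_{\partial B_1\cap\RS_U(\rho_0)}\omega\qquad\text{for }r\ge 1.
$$
Then set $\rho:=\min\{\rho_0,\rho_d\}$, with $\rho_d$ from Lemma~\ref{LemAnchoringPoint}, and choose $A$ large enough that $C_0 A^{\gamma-\gamma'}<1$. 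The claim is that a sufficiently small $\delta>0$, depending only on $d$ and $\gamma$, works for these $\rho$ and $A$.

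Assume the contrary: there are $\delta_n\downarrow 0$ and $(u_n,v_n)\in\OM(B_{\delta_n^{-1}})$ with $0\in\Sing(u_n)$, $\Gamma(v_n)\cap B_{\delta_n}\neq\emptyset$, $W(u_n;0,2)-W(u_n;0,1)<\delta_n$, yet violating the decay estimate. By Lemma~\ref{LemCompactness}, up to a subsequence $u_n\to U$ and $v_n\to V$ locally uniformly, with $U,V\in\M(\R^d)$; moreover $0\in\Sing(U)$ and $0\in\Gamma(V)$. Passing the Weiss bound to the limit and invoking the rigidity in Proposition~\ref{PropWeiss} shows $U$ is homogeneous, and since $0\in\Sing(U)$ we get $U\in\SC$. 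To identify $U\equiv V$: since $V-U\ge 0$ is harmonic on the connected set $\POSS$ (Theorem~\ref{ThmGlobalConnected}), the strong maximum principle leaves two options. Either $V-U>0$ throughout $\POSS$, so $(U,V)\in\OM(\R^d)$, and then Theorem~\ref{ThmStricMax} prohibits $0\in\Gamma(U)\cap\Gamma(V)$, a contradiction; or $V=U$ on $\POSS$, in which case $\partial\POSS\subset\{V=0\}$ makes $\POSS$ both open and closed in the connected set $\{V>0\}$, forcing $\{V>0\}=\POSS$ and $V\equiv U$ on $\R^d$.

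Up to a rotation from Lemma~\ref{LemAnchoringPoint}, $B_{r_d}(e_1)\subset\POSS\cap\RS_U(\rho_d)$, and Theorem~\ref{ThmContinuityOfRS} gives $e_1\in\RS_{u_n}(\rho)$ for large $n$. Set $c_n:=(v_n-u_n)(e_1)$ and $\varphi_n:=(v_n-u_n)/c_n$. Lemma~\ref{LemLinearization} yields, along a further subsequence, $\varphi_n\to\varphi$ locally uniformly in $C^2$ away from $\Sing(U)$, where $\varphi$ is a positive Jacobi field on $U$ with $\varphi(e_1)=1$. Dividing the violated estimate by $c_n$ and using $\sup_{\partial B_1\cap\RS_{u_n}(\rho)}\varphi_n\ge\varphi_n(e_1)=1$ gives $\sup_{\partial B_A\cap\RS_{u_n}(A\rho)}\varphi_n>A^{-\gamma}$. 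Pick an almost-maximizer $q_n\in\partial B_A\cap\RS_{u_n}(A\rho)$; along a subsequence $q_n\to q\in\partial B_A$ with $\rho_U(q)\ge A\rho>0$ by Theorem~\ref{ThmContinuityOfRS}, so $q\in\POSS\cup\Reg(U)$ and $\varphi_n(q_n)\to\varphi(q)$ by the $C^2$ convergence (using Lemma~\ref{LemSchauder} up to the regular boundary when $q\in\Reg(U)$). Therefore $\varphi(q)\ge A^{-\gamma}$, while the choice of $\rho_0,C_0$ gives $\varphi(q)\le C_0 A^{-\gamma'}\inf_{\partial B_1\cap\RS_U(\rho)}\varphi\le C_0 A^{-\gamma'}$, contradicting our choice of $A$. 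The main technical obstacle is precisely this passage-to-the-limit step: capturing the supremum over $\partial B_A\cap\RS_{u_n}(A\rho)$ as a value of the limit Jacobi field on $\partial B_A\cap\overline{\RS_U(A\rho)}$ requires the continuity of the regularity scale (Theorem~\ref{ThmContinuityOfRS}) together with Lemma~\ref{LemSchauder} to ensure $C^{2,\alpha}$ boundary convergence of $\varphi_n$ on the regular portion of $\Gamma(U)$.
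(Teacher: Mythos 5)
Your proof follows essentially the same compactness-and-contradiction strategy as the paper's: after extracting $U = V \in \SC$ from the Weiss pinching and Theorem~\ref{ThmStricMax}, linearize via Lemma~\ref{LemLinearization}, use $e_1$ as the normalization anchor, and obtain a contradiction with Proposition~\ref{PropDecayJacobiField}. Your explicit dichotomy for the identification $U \equiv V$ correctly fills in what the paper leaves implicit.

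One step is misattributed, however. You claim $\varphi_n(q_n) \to \varphi(q)$ when $q \in \Reg(U)$ ``using Lemma~\ref{LemSchauder} up to the regular boundary.'' Lemma~\ref{LemSchauder} applies to solutions of the Jacobi equation, but $\varphi_n = (v_n - u_n)/(v_n-u_n)(e_1)$ is a normalized difference of two genuine minimizers, \emph{not} a Jacobi field on $u_n$ (or $U$); it only becomes one in the limit. The boundary control on $\varphi_n$ must instead come from the $C^1$ bound in \eqref{EqnLinearizationLip} of Lemma~\ref{LemLinearization} (or, equivalently, Lemma~\ref{LemHarnackForDifference}). With that in hand, the clean way to close the argument is the paper's perturbation: replace $q_n$ by a nearby interior point $\bar q_n \in B_{c\eps}(q_n) \cap \{u_n>0\} \cap \{U>0\}$, use \eqref{EqnLinearizationLip} to keep $\varphi_n(\bar q_n)$ within a factor $2$ of $\varphi_n(q_n)$, and then pass to the limit using only the \emph{interior} $C^2$ convergence of $\varphi_n$ to $\varphi$. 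This avoids any claim of $C^{2,\alpha}$ convergence of $\varphi_n$ up to $\Gamma(u_n)$, which the paper does not establish and Lemma~\ref{LemSchauder} does not supply. After this repair the remainder of your argument is correct, and the constants $C_0 A^{\gamma-\gamma'}<1$ play the same role as the paper's condition $A^{(\gamma_d-\gamma)/2} > 2^{1+\gamma'} C_{\gamma'}$.
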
 
Recall  the Weiss monotonicity formula $W(\cdot)$ from \eqref{EqnWeiss}.

\begin{proof}
For given $\gamma\in(0,\gamma_d)$, let $\gamma'=\frac12(\gamma+\gamma_d)<\gamma_d$. Fix $\rho>0$ as in Proposition~\ref{PropDecayJacobiField}, corresponding to $\gamma'$. Let $C_{\gamma'}$ denote the constant $C$ from the same proposition. Finally, we pick $A>2$ such that 
\begin{equation}
\label{aNoTher}
A^{\frac12(\gamma_d-\gamma)}>2^{1+\gamma'}C_{\gamma'}.
\end{equation}

With such  $\rho$ and $A$, suppose that the statement fails for any $\delta>0$, we find a sequence $(u_n,v_n)\in\OM(B_n)$ with 
\begin{equation}
\label{WeissPinching}
0\in\Sing(u_n), \hem\Gamma(v_n)\cap B_{1/n}\neq\emptyset,
\hem\text{ and }\hem
W(u_n;0,2)-W(u_n;0,1)<1/n,
\end{equation} 
but
\begin{equation}
\label{anothercontra}
\sup_{\partial B_A\cap\RS_{u_n}(A\rho)}(v_n-u_n)> A^{-\gamma}\sup_{\partial B_1\cap\RS_{u_n}(\rho)}(v_n-u_n).
\end{equation} 

Up to a subsequence, Lemma~\ref{LemCompactness} implies that 
$$
u_n\to U\in\M(\R^d)\hem\text{ and }\hem v_n\to V\in\M(\R^d).
$$
With \eqref{WeissPinching}, we see that $0\in\Gamma(U)\cap\Gamma(V)$. It follows from Theorem~\ref{ThmStricMax} that 
$$
U=V \hem\text{ in }\R^d.
$$
With \eqref{WeissPinching}, we have $W(U;0,2)=W(U;0,1)$. It follows from Proposition~\ref{PropWeiss} that 
$$
U\in\SC.
$$
Up to a rotation, Lemma~\ref{LemAnchoringPoint} gives
$$
B_{r_d}(e_1)\subset\POSS\cap\RS_U(\rho_d).
$$
In particular, Theorem~\ref{ThmContinuityOfRS} implies 
\begin{equation}
\label{E1admissible}
e_1\in\RS_{u_n}(\rho)\hem \text{ for all large }n.
\end{equation}

For each $n$, define 
$$
\varphi_n:=\frac{v_n-u_n}{(v_n-u_n)(e_1)},
$$
and let $\varphi$ denote the subsequential limit of $\{\varphi_n\}$ from Lemma~\ref{LemLinearization}.

With \eqref{anothercontra} and \eqref{E1admissible}, we have
$$
\varphi_n(p_n)>A^{-\gamma} \hem\text{ at some }\hem p_n\in\partial B_A\cap\RS_{u_n}(A\rho).
$$
For $\eps>0$ small, Lemma~\ref{LemBasicPropertiesOfRS} gives $B_\eps(p_n)\subset\RS_{u_n}(\frac34A\rho)$. Proposition~\ref{PropLipNonD} gives $\bar{p}_n\in B_\eps(p_n)$ such that 
$$
u_n(\bar{p}_n)\ge c\eps, 
\hem\text{ and }\hem
\varphi_n(\bar{p}_n)>A^{-\gamma}/2,
$$
where the second comparison follows from \eqref{EqnLinearizationLip}.

As a result, we have $B_{c\eps}(\bar{p}_n)\subset \{u_n>0\}\cap\POSS$ for large $n$.
Up to a subsequence, we have
$$
\bar{p}_n\to \bar{p}\in B_{A-\eps}^c\cap\RS_U(A\rho/2),
$$
where we used Theorem~\ref{ThmContinuityOfRS}.

With the convergence of $\varphi_n$ to $\varphi$ in $\POSS$, we conclude
$$
\varphi(\bar{p})\ge A^{-\gamma}/2 \hem\text{ at some }\bar{p}\in B_{A-\eps}^c\cap\RS_U(A\rho/2)\subset B_{A/2}^c\cap\RS_U(A\rho/2).
$$
On the other hand, with $\varphi_n(e_1)=1$ for all $n$, we have
$
\varphi(e_1)=1
$
with $e_1\in\RS_U(\rho)$.
\vem

Consequently, Proposition~\ref{PropDecayJacobiField} gives
\begin{align*}
A^{-\gamma}/2\le\varphi(\bar{p})\le\sup_{B^c_{A/2}\cap\RS_U(A\rho/2)}\varphi\le C_{\gamma'}(A/2)^{-\gamma'}\inf_{\partial B_1\cap\RS_U(\rho)}\varphi\le C_{\gamma'}(A/2)^{-\gamma'}.
\end{align*}
This contradicts \eqref{aNoTher}.
\end{proof}

We combine Lemma~\ref{LemGrowthControl} and Lemma~\ref{LemDecay} to get the main technical estimate of this work.
\begin{prop}
\label{PropMain}
For $\gamma_d$ from \eqref{EqnGammaD}, let $\gamma\in(0,\gamma_d)$. 

There are positive constants $\rho$ and $\delta$ small, and $C$ large, depending only on $d$ and $\gamma$, such that for $(u,v)\in\OM(B_1)$ with 
$$
0\in\Sing(u) \hem\text{ and }\hem \Gamma(v)\cap B_s\neq\emptyset\hem \text{ for some }0<s<1/2,
$$
we have
$$
\sup_{\partial B_{\delta}\cap\RS_u(\delta\rho)}(v-u)\le Cs^{1+\gamma}.
$$
\end{prop}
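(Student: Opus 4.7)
The plan is to iterate the decay estimate of Lemma~\ref{LemDecay} dyadically between the smallest useful scale $r \sim s/\delta$ (where the Lipschitz bound gives an $O(s/\delta)$ control on $v-u$) and the target scale $r = \delta$. Each good step contributes a factor $A^{-\gamma}$, and over the $K \sim \log_A(\delta^2/s)$ steps this compounds to $(s/\delta^2)^\gamma$, producing the desired $s^{1+\gamma}$ decay. Before starting, observe that the case $s \ge \delta^2$ is trivial: the Lipschitz bound (Proposition~\ref{PropLipNonD}) gives $\sup_{\partial B_\delta}(v-u) \le C\delta \le (C\delta^{-1-2\gamma})\,s^{1+\gamma}$, and the $\delta$-dependent constant is absorbed since $\delta$ will be chosen depending only on $d,\gamma$. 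So I may assume henceforth that $s < \delta^2$.

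\textbf{Iteration.} Fix $\rho$ and $A$ from Lemma~\ref{LemDecay} (shrinking $\rho$ below the $\rho_d$ of Lemma~\ref{LemAnchoringPoint} if needed), and let $L$ be the constant from Lemma~\ref{LemGrowthControl} associated with $A$ and $\rho$. Choose $\delta$ small enough that $\delta < 1/L$ and $\delta$ satisfies the smallness requirement of Lemma~\ref{LemDecay}. Set $r_k := \delta A^{-k}$ for $k=0,\ldots,K$ with $K := \lfloor\log_A(\delta^2/s)\rfloor$; this choice ensures $r_K \in [s/\delta, As/\delta)$. Writing $f(r) := \sup_{\partial B_r \cap \RS_u(r\rho)}(v-u)$, at every scale $r_k \in [s/\delta,\delta]$ the rescaled pair $(u_{0,r_k}, v_{0,r_k})$ satisfies the common hypotheses of Lemmas~\ref{LemDecay} and~\ref{LemGrowthControl}: ordered minimizers on a sufficiently large ball, $0 \in \Sing(u_{0,r_k})$, and $\Gamma(v_{0,r_k}) \cap B_\delta \neq \emptyset$ (since $r_k\delta \ge s$). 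Declare $r_k$ \emph{good} if additionally $W(u;0,2r_k) - W(u;0,r_k) < \delta$, and \emph{bad} otherwise. At good scales Lemma~\ref{LemDecay} yields $f(r_{k-1}) \le A^{-\gamma} f(r_k)$; at bad scales Lemma~\ref{LemGrowthControl} unconditionally gives $f(r_{k-1}) \le L\, f(r_k)$.

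\textbf{Counting bad scales and finishing.} Because $A>2$, the intervals $(r_k,2r_k)$ are pairwise disjoint subsets of $(0,\delta)$, so by Lemma~\ref{LemWeissGap} the number of bad indices is bounded by $K_b \le C_d/\delta$. Chaining the per-step estimates from $k=K$ down to $k=0$,
\[
f(\delta) \le A^{-(K-K_b)\gamma}\, L^{K_b}\, f(r_K) \le A^{-K\gamma}\,(A^\gamma L)^{C_d/\delta}\, f(r_K).
\]
Using $A^{-K\gamma} \le A^\gamma (s/\delta^2)^\gamma$ from the definition of $K$, together with the Lipschitz bound $f(r_K) \le C r_K \le CAs/\delta$, every $\delta$-dependent factor absorbs into a single $C=C(d,\gamma)$, giving $f(\delta) \le C s^{1+\gamma}$ as required. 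The main obstacle is the handling of bad scales: without the Weiss gap estimate bounding $K_b$, the cumulative loss $L^{K_b}$ would be uncontrolled, so Lemma~\ref{LemWeissGap} is essential to keep the final constant dimensional.
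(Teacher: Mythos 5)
Your proof is correct and follows the paper's approach closely: the same dichotomy between Lemma~\ref{LemDecay} (good scales, gaining $A^{-\gamma}$) and Lemma~\ref{LemGrowthControl} (bad scales, losing $L$), and the same use of Weiss monotonicity together with Lemma~\ref{LemWeissGap} to bound the number of bad scales by a $\delta$-dependent (hence dimensional) constant. You organize it by tracking the scalar quantity $f(r)$ directly instead of the paper's sequence of rescaled pairs $(u_n,v_n)$, and you count bad scales via disjointness of the intervals $(r_k,2r_k)$ rather than the paper's telescoping of $W(u_{j_k};0,1)-W(u_{j_{k+1}};0,1)$; both are equivalent, and your version is marginally cleaner. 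The upfront dismissal of $s\ge\delta^2$ is also a nice touch that the paper handles implicitly.
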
 
Recall the space of ordered minimizers $\OM(\cdot)$ from Definition~\ref{DefOrderedMinimizers}. The collection of points with controlled regularity scales, $\RS(\cdot)$, is given in Definition~\ref{DefRegularityScales}.

\begin{proof}
For $\gamma\in(0,\gamma_d)$, let $\rho,\delta$ and $A$ be  constants from Lemma~\ref{LemDecay}. For such $\rho$ and $A$, let $L$ denote the constant from Lemma~\ref{LemGrowthControl}. Without loss of generality, we can assume $L<\delta^{-1}.$

With Proposition~\ref{PropLipNonD}, choosing $C$ large, it suffices to consider small $s$. The desired estimate follows from an iteration process which we lay out in several steps.

\vem

\textit{Step 1: The initial normalization.}

Choose $r:=s/\delta$, and let 
$$
u_0(x):=\frac{u(rx)}{r} \hem\text{ and }v_0(x):=\frac{v(rx)}{r}. 
$$
The $(u_0,v_0)\in\OM(B_{\delta/s})$ with $0\in\Sing(u_0)$ and $\Gamma(v_0)\cap B_\delta\neq\emptyset.$

If we have $\delta/s>\delta^{-1}$, then depending on the Weiss energy for $u_0$ from \eqref{EqnWeiss}, we have the  dichotomy:
\begin{enumerate}
\item{Either $W(u_0;0,2)-W(u_0;0,1)\ge\delta$, then we have, by Lemma~\ref{LemGrowthControl}, 
$$
\sup_{\partial B_A\cap\RS_{u_0}(A\rho)}(v_0-u_0)\le L\sup_{\partial B_1\cap\RS_{u_0}}(v_0-u_0);
$$}
\item{Or $W(u_0;0,2)-W(u_0;0,1)<\delta$, then we have, by Lemma~\ref{LemDecay},
$$
\sup_{\partial B_A\cap\RS_{u_0}(A\rho)}(v_0-u_0)\le A^{-\gamma}\sup_{\partial B_1\cap\RS_{u_0}}(v_0-u_0).
$$}
\end{enumerate}

In either case, we define
$$
u_1(x):=\frac{u_0(Ax)}{A}\hem\text{ and }v_1(x):=\frac{v_0(Ax)}{A},
$$
then we have $(u_1,v_1)\in\OM(B_{\frac{\delta}{As}})$, $0\in\Sing(u_1)$ and $\Gamma(v_1)\cap B_\delta\neq\emptyset$.
\vem

\textit{Step 2: The inductive step.}

Suppose that we have found $\{(u_j,v_j)\}_{j=1,\dots,n}$ with 
$$
(u_n,v_n)\in\OM(B_{\frac{\delta}{A^ns}}), \hem 0\in\Sing(u_n) \hem\text{ and }\hem \Gamma(v_n)\cap B_\delta\neq\emptyset.
$$

If $\frac{\delta}{A^ns}\le\delta^{-1}$, then we terminate the iteration.  

If $\frac{\delta}{A^ns}>\delta^{-1}$, we have a dichotomy as in Step 1. Using either Lemma~\ref{LemGrowthControl} or Lemma~\ref{LemDecay}, we get
\begin{enumerate}
\item{Either $W(u_n;0,2)-W(u_n;0,1)\ge\delta$, then we have
$$
\sup_{\partial B_A\cap\RS_{u_n}(A\rho)}(v_n-u_n)\le L\sup_{\partial B_1\cap\RS_{u_n}}(v_n-u_n);
$$}
\item{Or $W(u_n;0,2)-W(u_n;0,1)<\delta$, then we have
$$
\sup_{\partial B_A\cap\RS_{u_n}(A\rho)}(v_n-u_n)\le A^{-\gamma}\sup_{\partial B_1\cap\RS_{u_n}}(v_n-u_n).
$$}
\end{enumerate}

We define 
$$
u_{n+1}(x):=\frac{1}{A}u_{n}(Ax)\hem\text{ and }v_{n+1}(x):=\frac{1}{A}v_n(Ax),
$$
satisfying $(u_{n+1},v_{n+1})\in\OM(B_{\frac{\delta}{A^{n+1}s}})$, $0\in\Sing(u_{n+1})$ and $\Gamma(v_{n+1})\cap B_\delta\neq\emptyset$.

Moreover, if we encounter possibility (1) in the dichotomy, then 
$$
\sup_{\partial B_1\cap\RS_{u_{n+1}}(\rho)}(v_{n+1}-u_{n+1})=\frac{1}{A}\sup_{\partial B_A\cap\RS_{u_n}(A\rho)}(v_n-u_n)\le \frac{L}{A}\sup_{\partial B_1\cap\RS_{u_n}(\rho)}(v_n-u_n).
$$
Otherwise, we have
$$
\sup_{\partial B_1\cap\RS_{u_{n+1}}(\rho)}(v_{n+1}-u_{n+1})\le A^{-1-\gamma}\sup_{\partial B_1\cap\RS_{u_n}(\rho)}(v_n-u_n).
$$
Note that we used the scaling symmetry from Lemma~\ref{LemScalingRS}.
\vem

\textit{Step 3: Conclusion.}

Suppose after $N$ times, we terminate the iteration. That is, we have
\begin{equation}
\label{3}
A^{N-1}<\delta^2/s \hem\text{ and }\hem A^N\ge\delta^2/s.
\end{equation}

Among these $N$ iterations, suppose that for $m$ times we encounter possibility (1) in the dichotomy at steps $j_1<j_2<j_3<\dots<j_m$.  Then we have
\begin{equation}
\label{2}
\sup_{\partial B_1\cap\RS_{u_N}(\rho)}(v_N-u_N)\le \left(\frac{L}{A}\right)^m(A^{-\gamma-1})^{N-M}\sup_{\partial B_1\cap\RS_{u_0}(\rho)}(v_0-u_0).
\end{equation}

On the other hand, for $k=1,\dots,m-1$, we have
\begin{align*}
W(u_{j_k};0,1)-W(u_{j_{k+1}};0,1)&=W(u_{j_{k+1}};0,A^{j_{k+1}-j_{k}})-W(u_{j_{k+1}};0,1)\\
&\ge W(u_{j_{k+1}};0,2)-W(u_{j_{k+1}};0,1)\\
&\ge\delta
\end{align*}
since $A>2.$
Proposition~\ref{PropWeiss} implies
$$
W(u_0;0,2)-W(u_0;0,0+)\ge \sum_{j\ge 0} [W(u_j;0,1)-W(u_{j+1};0,1)]\ge m\delta.
$$
It follows from Lemma~\ref{LemWeissGap} that $m$ is bounded by a constant, depending only on $d$ and $\delta$. Together with \eqref{2}, this gives
$$
\sup_{\partial B_1\cap\RS_{u_N}(\rho)}(v_N-u_N)\le C(A^{-1-\gamma})^N
$$
for a constant $C$, depending on $d$ and $\gamma$. 

Rescaling back to $(u,v)$, this gives
$$
\sup_{\partial B_{A^Nr}\cap\RS_u(A^Nr\rho)}(v-u)\le CA^{-\gamma N}s/\delta.
$$
Recall that $r=s/\delta$, the conclusion follows from \eqref{3}.
\end{proof}

\section{Improved generic regularity}
\label{sec:concl}
In this section, we  establish our main results Theorem~\ref{ThmMainIntro} and Corollary~\ref{CorMainIntro}. Compared with our previous result, the key improvement is the new superlinear  `cleaning' estimate \eqref{EqnNewCleaningIntro}.

\begin{lem}
\label{LemCleaningNew}
Suppose that $(u,v)\in\OM(B_1)$ with 
$$
p\in\Sing(u)\cap B_{1/2}, \hem \Gamma(v)\cap B_s(p)\neq\emptyset \hem\text{ for some }0<s<1/4,
$$
and 
$$
v\ge u+\tau \hem\text{ on }\partial B_1\cap\PosS \hem\text{ for some }0<\tau<1.
$$

Given $\gamma\in(0,\gamma_d)$ with $\gamma_d$ from \eqref{EqnGammaD}, there is a constant $C$, depending only on $\gamma$ and the dimension $d$, such that 
$$
\tau\le C s^{1+\gamma} \sup_{\partial B_1}u.
$$
\end{lem}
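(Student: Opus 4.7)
The plan is to combine a pointwise lower bound $v-u \ge (\tau/M)\,u$ in $\{u>0\}\cap B_1$ (obtained from a direct maximum principle comparison, where $M := \sup_{\partial B_1} u$) with the upper bound for $v-u$ on a small sphere around $p$ furnished by Proposition~\ref{PropMain}, and to evaluate both estimates at an anchoring point produced by Lemma~\ref{LemAnchoringPoint}. Note that $M>0$, else the hypotheses are vacuous by the strong maximum principle applied to $u$.

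For the lower bound, since $u \le v$ forces $\{u > 0\} \subset \{v > 0\}$, Proposition~\ref{PropEL} implies that $v-u$ is harmonic in $\{u > 0\} \cap B_1$. The auxiliary function $w := \frac{M}{\tau}(v-u) - u$ is harmonic in the same domain and satisfies $w \ge M - u \ge 0$ on $\partial B_1 \cap \{u > 0\}$ and $w = \frac{M}{\tau} v \ge 0$ on $\Gamma(u) \cap B_1$ (where $u=0$); the maximum principle yields
\[
v - u \;\ge\; \frac{\tau}{M}\, u \quad \text{in } \{u > 0\} \cap B_1.
\]
For the upper bound, rescale to center at $p$ via $u^* := u_{p,1/2}$ and $v^* := v_{p,1/2}$ (see \eqref{EqnRescaling}). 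Since $p \in B_{1/2}$, the pair satisfies $(u^*, v^*) \in \OM(B_1)$, $0 \in \Sing(u^*)$, and $\Gamma(v^*) \cap B_{2s} \ne \emptyset$ with $2s < 1/2$. Proposition~\ref{PropMain} then furnishes dimensional constants $\rho, \delta, C'$ such that, after unwinding the rescaling with Lemma~\ref{LemScalingRS},
\[
\sup_{\partial B_{\delta/2}(p)\, \cap\, \RS_u(\delta\rho/2)}(v - u) \;\le\; C'\, s^{1+\gamma}.
\]
For the anchoring, apply Lemma~\ref{LemAnchoringPoint} to $u_{p,\delta/2}$ (which belongs to $\M_0(B_2)$ since $p \in B_{1/2}$ and $\delta$ is small): up to a rotation, there is a point $q := p + (\delta/2) e_1 \in \partial B_{\delta/2}(p)$ with $u(q) \ge c_d \delta/2$ and regularity scale $\rho_u(q) \ge \rho_d \delta/2$.

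Provided $\rho \le \rho_d$, the anchoring point $q$ lies in $\RS_u(\delta\rho/2)$, so evaluating both estimates at $q$ yields
\[
\frac{\tau}{M} \cdot \frac{c_d \delta}{2} \;\le\; (v-u)(q) \;\le\; C'\, s^{1+\gamma},
\]
which rearranges to $\tau \le C\, s^{1+\gamma} \sup_{\partial B_1} u$ with $C = C(d,\gamma)$. The main technical obstacle is the bookkeeping on regularity-scale constants: the anchoring point has regularity scale $\rho_d \delta/2$, whereas the upper bound from Proposition~\ref{PropMain} is over the (potentially smaller) set $\RS_u(\delta\rho/2)$. The compatibility $\rho \le \rho_d$ is implicit in the construction of $\rho$ through Lemma~\ref{LemGrowthControl} (which already requires $\rho \in (0,\rho_d)$); if this implicit ordering ever fails, Lemma~\ref{LemBasicPropertiesOfRS}(3) allows perturbing $q$ to a nearby admissible point at the cost of an adjustment of constants.
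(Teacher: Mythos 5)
Your proof is correct and follows the paper's route exactly: the same maximum-principle comparison yielding $v-u\ge(\tau/M)u$ in $\{u>0\}\cap B_1$, the same application of Proposition~\ref{PropMain} (with the translation/rescaling to center at $p$ made explicit), and the same anchoring point from Lemma~\ref{LemAnchoringPoint} to bound the denominator from below. One small correction to your closing remark: Lemma~\ref{LemBasicPropertiesOfRS}(3) would not rescue the case $\rho>\rho_d$, since it only shows that a neighborhood of $\RS(\rho_2)$ is contained in $\RS(\rho_1)$ for $\rho_1<\rho_2$ (relaxing the threshold, not producing nearby points with \emph{larger} regularity scale); this is moot, however, because—as you correctly note first—the $\rho$ in Proposition~\ref{PropMain} is already constrained below $\rho_d$ through Lemma~\ref{LemGrowthControl}'s hypothesis, and in any case the estimates of Proposition~\ref{PropDecayJacobiField} can always be taken with a smaller $\rho$ by an additional use of the Harnack inequality (Theorem~\ref{ThmHarnack}).
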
 
Recall the space of ordered minimizers $\OM(\cdot)$ from Definition~\ref{DefOrderedMinimizers}. The singular set $\Sing(\cdot)$ is given in Definition~\ref{DefRegSing}, and the free boundary $\Gamma(\cdot)$ from \eqref{EqnGU}.

\begin{proof}
Proposition~\ref{PropMain} gives
$$
\sup_{\partial B_\delta(p)\cap\RS_{u}(\delta\rho)}(v-u)\le Cs^{1+\gamma}.
$$
Meanwhile, note that $(v-u)$ and $u$ are harmonic in $\PosS$. The comparison principle gives
$$
v-u\ge\frac{\tau}{\sup_{\partial B_1}u}  u \hem\text{ in }B_1.
$$
Therefore, we have
$$
\tau \sup_{\partial B_\delta(p)\cap\RS_{u}(\delta\rho)}u\le C s^{1+\gamma}\sup_{\partial B_1}u .
$$

With Proposition~\ref{PropLipNonD} and Lemma~\ref{LemAnchoringPoint}, we bound $\sup_{\partial B_\delta(p)\cap\RS_{u}(\delta\rho)}u$  from below by a constant depending only on $d$ and $\gamma$. The conclusion follows. 
\end{proof} 

With this, we give the proof of the main results.
\begin{proof}[Proof of Theorem~\ref{ThmMainIntro}]
With $\{g_t\}_{t\in(-1,1)}$ satisfying Assumption~\ref{Ass}  and notations from \eqref{EqnM}, we  estimate the following space-time singular set:
$$
S:=\{(x,t)\in B_1\times(-1,1):\hem x\in\Sing(u_t) \text{ for some }u_t\in\M(B_1;g_t)\}.
$$

Proposition~\ref{PropOrderingFromData} gives, for $t<s$,
$
(u_t,u_s)\in\OM(B_1).
$
Lemma~\ref{LemCleaningNew} gives 
$$C \|u_t\|_{L^\infty(B_1)} \mathrm{dist}(\Sing(u_t)\cap B_{1/2},\Sing(u_s))^{1+\gamma}\ge |t-s|$$ 
for any $\gamma<\gamma_d$ from \eqref{EqnGammaD}, where $C$ further depends on $G$ and $\theta$ in Assumption~\ref{Ass}. This establishes  condition (2) in Lemma~\ref{LemGenericReduction} for $p=1+\gamma_d$.

The desired conclusion follows from Lemma~\ref{LemGenericReduction} and Proposition~\ref{PropOptimalResultFeY}.
\end{proof} 

\begin{proof}[Proof of Corollary~\ref{CorMainIntro}]
For given boundary data $g$, we define $g_t=g+t$ on $\partial B_1$. Up to a translation in $t$, this family satisfies Assumption~\ref{Ass}. The conclusion follows from Theorem~\ref{ThmMainIntro}.
\end{proof}


\end{document}